\documentclass[11pt]{article}
\usepackage[utf8]{inputenc}

\usepackage{amsmath} 
\usepackage{amsthm,amssymb,mathrsfs} 
\usepackage[T1]{fontenc}
\usepackage[utf8]{inputenc}
\usepackage{ae,aecompl}
\usepackage{times}
\usepackage{cite}

\usepackage[colorlinks=true,
            linkcolor=blue,
            urlcolor=cyan,
            citecolor=green]{hyperref}

\usepackage{verbatim}
%
\usepackage{tikz}
\usepackage[numeric,backrefs]{amsrefs}
\usepackage{enumerate}
\usepackage[cmtip, all]{xy}
\usepackage{booktabs}
\usepackage{pb-diagram}
\usepackage{bbm}
\usepackage[top=2.54cm,bottom=2.54cm,left=2.54cm,right=2.54cm]{geometry}
\usepackage{tikz-cd} 
\numberwithin{equation}{section} 
\usepackage{bbding}


\newcommand{\rA}{{\rm A}}

\newcommand{\rF}{{\rm F}}
\newcommand{\rG}{{\rm G}}
\newcommand{\rH}{{\rm H}}
\newcommand{\rI}{{\rm I}}

\newcommand{\rL}{{\rm L}}

\newcommand{\rN}{{\rm N}}

\newcommand{\rR}{{\rm R}}

\newcommand{\rT}{{\rm T}}

\newcommand{\rV}{{\rm V}}




\newcommand{\bH}{{\bf H}}

\newcommand{\cA}{\mathcal{A}}

\newcommand{\cC}{\mathcal{C}}

\newcommand{\cF}{\mathcal{F}}

\newcommand{\cL}{\mathcal{L}}
\newcommand{\cM}{\mathcal{M}}

\newcommand{\cR}{\mathcal{R}}
\newcommand{\cS}{\mathcal{S}}

\newcommand{\cX}{\mathcal{X}}
\newcommand{\cY}{\mathcal{Y}}
 


\newcommand{\fg}{{\mathfrak g}}

\newcommand{\frgl}{{\mathfrak gl}}

\newcommand{\fm}{{\mathfrak m}}

\newcommand{\fX}{{\mathfrak X}}



\newcommand{\R}{\mathbb{R}}
\newcommand{\C}{\mathbb{C}}
\renewcommand{\H}{\bH}


\newcommand{\su}{\mathfrak{su}}
\newcommand{\so}{\mathfrak{so}}

\newcommand{\SO}{{\rm SO}}

\newcommand{\Spin}{{\rm Spin}}
\newcommand{\SU}{{\rm SU}}


\renewcommand{\det}{\mathop\mathrm{det}\nolimits}

\newcommand{\End}{{\mathrm{End}}}

\renewcommand{\epsilon}{\varepsilon}

\newcommand{\Hom}{{\mathrm{Hom}}}

\newcommand{\Ric}{{\rm Ric}}

\newcommand{\ad}{\mathrm{ad}}

\newcommand{\delbar}{\bar \del}
\newcommand{\del}{\partial}

\newcommand{\dvol}{\mathop\mathrm{dvol}\nolimits}

\newcommand{\ind}{\mathop{\mathrm{index}}}
\newcommand{\indT}{\mathop{\mathrm{index}_T}}

\renewcommand{\Re}{\mathop{\mathrm{Re}}}

\newcommand{\tr}{\mathop{\mathrm{tr}}\nolimits}

\newcommand{\vol}{\mathrm{vol}}

\newcommand{\zbar}{\overline z}
\newcommand{\ol}[1]{\overline{#1}}
\newcommand{\wt}[1]{\widetilde{#1}}


\newcommand{\qandq}{\quad\text{and}\quad}
\newcommand{\qforq}{\quad\text{for}\quad}

\newcommand{\qwhereq}{\quad\text{where}\quad}  
\newcommand{\whereq}{\text{where}\quad}

\def\<{\mathopen{}\left<}
\def\>{\right>\mathclose{}}
\def\({\mathopen{}\left(}
\def\){\right)\mathclose{}}

\usepackage{multicol, color}

\definecolor{gold}{rgb}{0.85,.66,0}
\definecolor{cherry}{rgb}{0.9,.1,.2}
\definecolor{burgundy}{rgb}{0.8,.2,.2}
\definecolor{orangered}{rgb}{0.85,.3,0}
\definecolor{orange}{rgb}{0.85,.4,0}
\definecolor{olive}{rgb}{.45,.4,0}
\definecolor{lime}{rgb}{.6,.9,0}
\definecolor{green}{rgb}{.2,.7,0}
\definecolor{grey}{rgb}{.4,.4,.2}
\definecolor{brown}{rgb}{.4,.3,.1}


\newtheorem{theorem}{Theorem}[section]
\theoremstyle{Theorem}

\newtheorem{proposition}{Proposition}[section] 
\theoremstyle{Proposition}

\newtheorem{lemma}[proposition]{Lemma}  

\theoremstyle{remark} 
\newtheorem{remark}[proposition]{Remark} 

\theoremstyle{definition}
\newtheorem{definition}[proposition]{Definition}  
\newtheorem{example}[proposition]{Example}

\newtheorem{notation}[proposition]{Notation}


\hypersetup{linkbordercolor=blue}
\DeclareMathOperator{\Gl}{Gl}
\DeclareMathOperator{\Ker}{ker}
\DeclareMathOperator{\Ima}{Im} 

\DeclareMathOperator{\Tr}{Tr}

\DeclareMathOperator{\Span}{ {\rm Span}}

\newcommand{\dtzero}{\left.\frac d{dt}\right|_{t=0}}

\newcommand{\dtzer}{\left.\frac {d^2}{dt^2}\right|_{t=0}}

\newcommand{\ii}{{ \textbf{i}}}  

\usepackage{tikz}
\usetikzlibrary{shapes.geometric}
\newcommand{\warningsign}{\tikz[baseline=-.75ex] \node[shape=regular polygon, regular polygon sides=3, inner sep=0pt, draw, thick] {\textbf{!}};}
\usepackage[textsize=footnotesize,textwidth=3.5cm]{todonotes}
\usepackage[affil-it]{authblk}

 \newcommand{\quo}[2]{{\raisebox{.2em}{$#1\!$}\left/\raisebox{-.2em}{$#2$}\right.}} 

\begin{document}

\title{A Weitzenböck formula on Sasakian holomorphic bundles}
\author[1]{L. Portilla} 
\author[2]{E. Loubeau}
\author[3]{H. N. Sá  Earp}

\affil[1,2]{Univ. Brest, CNRS UMR 6205, LMBA, F-29238 Brest, France}
\affil[3]{Universidade Estadual de Campinas (Unicamp), SP, Brazil}

\date{\today}
\maketitle
\begin{abstract} 
This work seeks to advance the understanding of the smooth structure of the moduli space $\cM$ of self-dual contact instantons (SDCI) on Sasakian $7$-manifolds. A neighborhood of  a smooth point of $\cM$ is locally modeled on the first cohomological group of an   elliptic complex \eqref{eq:complex L D}. In \cite{portilla2023} Portilla and Sá Earp constructed the moduli space $\cM^\ast$ of SDCI and  provide a cohomological obstruction to the smoothness  of $\cM^\ast$, in terms of the basic cohomology group $\rH_B^2$ of \eqref{eq:BasicComplexDeformation}. In this paper we study conditions under which this obstruction disappears, by computing a  Weitzenböck formula and using a Bochner-type method to obtain a vanishing theorem for $\rH_B^2$. Given an SDCI $\nabla\in \cA(E)$ on a  Sasakian bundle  $E\to M$, we find sufficient conditions for the vanishing of $\rH_B^2$ in the positivity of a couple of operators $\cR$ and $\cF$ depending on the curvatures $F_\nabla$ of $\nabla$ and the Riemann curvature of the Sasakian metric $g$. In particular, we find that if $M$ is transversely Ricci positive and $\cF$ positive, the moduli space of SDCI must be smooth. However, in general, the operator $\cF$ is not positive definite and we describe bundles over the Stiefel manifold $V^{5,2}$ for which it is the case. Finally, we show that when the energy of $F_A$ is less than the first non-zero eigenvalue of $\Ric^T$ then $\rH^2_B$ vanishes.  
\end{abstract}
\tableofcontents
\newpage
\section{Introduction}

This is a preliminary version on joint work in progress, pertaining to the postdoctoral research activities of the first-named author. We advise against citations directed at numbered items, since sectioning will likely change in subsequent versions. While the material will be revised, the third-named author is not responsible for the content of this version. 

\begin{notation}\label{not:1}
From now on this paper, unless otherwise stated, $M$ denotes a Sasakian manifold with Sasakian structure $\cS=(\eta,\xi,g,\Phi)$[cf. Definition \ref{thm:app sasakian manifold}]. $E\to M$ will denote a complex vector $ \rG$-bundle and $\cA(E)$ the space of connections on $E$. The adjoint bundle $\fg_E$ is the subbundle of  $\End(E)$ whose fibre at a point consists of all those endomorphisms  $T\in\End(E_x)$ whose matrix representation lies in $\fg$. We will suppose $\rG$ to be a compact Lie group, so that the Lie algebra $\fg$ of $\rG$ admits some Ad-invariant  inner product.  
\end{notation}
The study of higher dimensional instantons \cite{donaldson1998gauge,wang2012higher} is particularly interesting in dimension $7$ since we can also define $\mathrm{G}_2$-structures. It is expected that the study of solutions of the instanton equations will yield new invariants, just as the original Donaldson’s invariants in $4$-manifold \cite{Donaldson1990}, Casson invariant for flat connections over $3$–manifolds \cite{donaldson2002floer} and, particularly relevant to this work, the  \emph{Donaldson invariants} counting $\mathrm{G}_2$-instantons on $7$-manifolds with holonomy in 
$\mathrm{G}_2$ \cite{Donaldson2011} or $\Spin(7)$-instantons on $8$-manifolds with holonomy $\Spin(7)$ or $\SU(4)$ \cite{cao2014donaldson,Donaldson2011}.\\

 A connection  $\nabla$ on $E\to M$ is called $\lambda$-\emph{contact instanton} (CI) if its curvature $2$-form satisfies the following equation \cite{portilla2023}
\begin{equation}
\label{eq: instanton}
T_\eta(\rF_\nabla)=\ast(\eta\wedge d\eta\wedge\rF_\nabla) = \lambda \rF_\nabla, \qforq \lambda=-1,1 \text{ or } -2.
\end{equation}
A connection $\nabla$ solution of \eqref{eq: instanton} for $\lambda=1$ is called a \emph{selfdual contact instanton}(SDCI) and we denote by $\cM^\ast$ the moduli space of SDCI, i.e. solutions of \eqref{eq: instanton} modulo the group gauge action, cf. \cite{portilla2023} Under certain hypothesis, a Sasakian manifold can also be equipped with a $\mathrm{G}_2$-structure, e.g. contact Calabi-Yau manifolds \cite{habib2015some}, and studies of $\mathrm{G}_2$-instantons have appeared in the literature \cites{SaEarp2015,Calvo-Andrade2016}. When a Sasakian manifold $M$ is endowed with a $\mathrm{G}_2$-structure, one can to study three interrelated notions of instantons:  SDCI, $\mathrm{G}_2$-instantons  and transversely Hermitian Yang-Mills connections (tHYM). A description of the interaction between the different types of instantons can be found in \cite[Theorem~1.1]{portilla2023}, together with a \emph{cohomological obstruction} to the smoothness of $\cM^{\ast}$.  In this paper we pursue investigations on $\cM^{\ast}$ and seek conditions removing this cohomological obstruction, i.e. 
$\cM^{\ast}$ is a smooth finite dimensional manifold.

\subsection*{Review of the local description of the moduli space of contact instantons}
\label{sec:1}  

We summarize the main results on the construction of the moduli space $\cM^{\ast}$  of SDCI \eqref{eq: instanton}, the main reference for this Section is \cite{portilla2023}. We denote by $\Omega^k_H(M)$ the space of \textit{transversal} $k$-forms on $M$, i.e. $k$-forms on $M$ satisfying  $i_\xi\alpha=0$. The $k$-form $\alpha$ is called \textit{basic} if it is transversal and satisfies  $i_\xi d \alpha=0$ and we denote the space of basic $k$-forms by $\Omega^k_B(M)$. Then we have the  splitting
\eqref{eq:DecompostionKforms}
$$ 
\Omega^k(M)=\Omega^k_H(M)\oplus \eta\wedge \Omega^{k-1}_H(M).
$$ 
The operator
$  
\rT_\eta\colon \alpha\in  \Omega^2(M)\mapsto\ast(\eta\wedge d\eta\wedge\alpha)\in \Omega^2(M)   
$  
further splits the bundle of transverse  $2$-forms into eigenspaces associated to the eigenvalues $\{1,-1,-2\}$ as follows cf. \cite[eq.~1.6]{portilla2023} 
\begin{equation}
\label{eq:2formsDecomposition}
   \Omega^2(M) =\underbrace{\Omega^2_1(M) \oplus\Omega^2_6(M)\oplus \Omega^2_8(M)}_{\text{transversal forms}} \oplus \Omega^2_V(M).
   \end{equation}
The subscript of $\Omega^2_{j}$ denotes the  rank of $\Omega^2_{j}$ and the rank $6$ bundle $\Omega^2_V$, called the bundle of \textit{vertical forms}, can be locally interpreted as $2$-forms with $\eta$-component. The decomposition~\eqref{eq:2formsDecomposition} naturally extends to  $\fg_E$-valued  $2$-forms on $M$.

If $\nabla\in \cA(\fg_E)$ is an SDCI then its curvature $2$-form $\rF_\nabla$ is in $\Omega^2_8(\fg_E)$, while an \emph{anti-selfdual contact instanton }(ASDCI) is a solution of \eqref{eq: instanton} for $\lambda=-1$, so that 
$\rF_\nabla\in \Omega^2_6(\fg_E)$. The third component,  $\Omega^2_1(\fg_E)= \omega\otimes\Omega^0(\fg_E)$ is locally generated by the fundamental transverse Kähler form $\omega:=d\eta$. 
In order to describe the structure of $\cM^{\ast}$, we introduce the algebraic ideal $\rI:=\langle\Omega^2_8(\fg_E) \rangle$ generated by $\Omega^2_8(\fg_E)$ in the graded Lie algebra $\Omega^\bullet(\fg_E)$ and define the graded Lie algebra   
$ 
L^\bullet:=\quo{\Omega^\bullet(\fg_E)}{\rI}.  
$ 
The spaces $\rL^k$  are characterized in \cite[Proposition.~4.5]{portilla2023} as follows: 
\begin{equation}\label{eq:Lk spaces}
\begin{array}{lll}
\rL^0  \cong\Omega^0(\fg_E),&\rL^1  \cong\Omega^1(\fg_E),  & \rL^2 \cong\Omega^{2}_{6\oplus 1} (\fg_E)\oplus\eta\wedge\Omega^{1}_H(\fg_E),  \\[3pt] 
\rL^3 \cong\eta\wedge \Omega^{2}_{6\oplus 1} (\fg_E), & \rL^k=0\quad \text{for}\; k\geq 4.&
\end{array}
\end{equation}
where $\Omega^{2}_{6\oplus 1} (\fg_E):=\Omega^{2}_{6}(\fg_E) \oplus\Omega^{2}_{1} (\fg_E)$, and they fit in an elliptic deformation complex 
\cite[Proposition~4.7]{portilla2023} 
\begin{equation}
\label{eq:complex L D}
C^\bullet\colon\quad  0\xrightarrow[{}]{} \rL^0\xrightarrow[{}]{D_0} \rL^1\xrightarrow[{}]{D_1} \rL^2\xrightarrow[{}]{D_2} \rL^3\xrightarrow[{}]{} 0,
\end{equation}  
where $D_k$ denotes the quotient of the extension $d^\nabla\colon \Omega^k(\fg_E)\to \Omega^{k+1}(\fg_E)$  which descends to $L^k$. The complex \eqref{eq:complex L D} is  referred  to as the  \emph{deformation complex associated}  to  $\nabla$. We denote by $\rH^i(C)$, for $i=0,1,2,3$, the cohomological groups  of \eqref{eq:complex L D} and by  
$a_i= \dim(\rH^i(C))$ their respective dimensions.
The space of infinitesimal deformations of an SDCI is modeled by $\rH^1(C)$ and since the index of a elliptic complex on a  odd-dimensional manifold vanishes, we have
$$ 
a_0-a_1+a_2-a_3=0.
$$ 
If $\nabla $ is irreducible, one can  show that $a_0=0$,  therefore  the expected  dimension of $\cM^\ast$ is given by $  a_1=a_2-a_3.$ The problem at this point is that one cannot expect obstructed instantons in general (those for which $a_2=0$) because then, using a Gysin sequence (cf. \cite[Proposition~4.17]{portilla2023}) one can show that $a_3=0$, giving a $0$-dimensional moduli space. However, the obstruction can be obtained in the \emph{basic cohomology}. Note that $D_k$ in \eqref{eq:complex L D} restricts to  basic forms, hence we have the \emph{basic deformation complex }  
\begin{equation}
\label{eq:BasicComplexDeformation}
0\xrightarrow[{}]{} \rL^0_B\xrightarrow[{}]{D_B} \rL^1_B\xrightarrow[{}]{D_B}\rL^2_B  \xrightarrow[{}]{} 0,
\end{equation} 
Denote by $\rH_B^k$ the cohomological groups of \eqref{eq:BasicComplexDeformation} and $h^k=\dim H_B^k$ their dimensions, 
the transverse index of $\nabla $ is defined as the index of \eqref{eq:BasicComplexDeformation}, i.e.
\begin{equation}\label{eq:index T}
\indT(\nabla)= h^0- h^1+h^2.
\end{equation}
\begin{remark}\label{rem:obstruction}
Despite \eqref{eq:BasicComplexDeformation} not being elliptic (it is in fact transversely elliptic), one can prove that $\rH^1_B\cong \rH^1$ \cite[Corollary~4.18]{portilla2023}. Furthermore, the vanishing of $\rH^2_B$ is the topological obstruction to the smoothness of $\cM^\ast$ \cite[Corollary.~4.26~ \& Proposition.~4.27]{portilla2023} and, in this case, $\dim(T_{[A]}\cM^\ast)=-\ind_T(\nabla)$.    
\end{remark}
We now describe how transversal Hodge theory and the transversal Kähler foliation describe the moduli space of SDCI. 
The restriction of $\Phi\in \End(TM)$ to $\rH$ satisfies $(\Phi\vert_{ \rH_x})^{2}=-\rI$, hence one can define the type $(p,q)$ of a transverse $p+q$-forms \eqref{eq:Quebra1} as a section of 
\begin{equation}\label{eq:HcSplit}
\rH^{p,q}:= \Lambda^{p} (\rH^{1,0})^\ast\otimes \Lambda^{q} (\rH^{0,1})^\ast \subset\Lambda^{p+q} (\rH)^\ast\otimes_{\R}\C.   
\end{equation}
We naturally extend the bidegree splitting \eqref{eq:HcSplit} to $\fg_E$-valued $k$-forms by setting 
$\Omega^{p,q}(\fg_E):=\Gamma(\Lambda^{p,q}\rH^\ast_\C \otimes \fg_E),$ and consider the metric on $\Omega^k(\fg_E)$   induced by tensoring the metric in  $\Lambda^kTM$ and a metric in  $\fg_E$. We also  extend the usual conjugation of forms in $\Omega^k(TM_\C^\ast)$ to sections of $\Omega^{p,q}(\fg_E)$ by conjugating the form part, so that 
$\ol{\Omega^{p,q}(\fg_E)}\cong\Omega^{q,p}(\fg_E)$ (cf. Section \ref{sec:inner product} for details).
We have  the following characterizing result which describes the irreducible split of $2$-forms on $M$ \cite[Lemma 1.2]{portilla2023}:

\begin{lemma}
\label{lem:SDcharacterization}
Let $(M,\cS)$ be a Sasakian manifold, then
\begin{enumerate}[(i)]

\item  A $2$-form $\alpha$ is in $\Omega^2_8$ if and only if it is of type $(1,1)$ and orthogonal to $\omega$.
\item A $2$-form $\alpha$ is in $\Omega^2_6$  if and only if  $\alpha=\beta +\ol{\beta} $ for a $2$-form $\beta$ of type $(2,0)$.  
\end{enumerate} 
\end{lemma}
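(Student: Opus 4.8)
The plan is to diagonalise $\rT_\eta$ on transversal $2$-forms by passing to the transverse Kähler structure, both assertions being pointwise and $\xi$-invariant. First I would rewrite $\rT_\eta$ as a transverse operator: for a transversal $2$-form $\alpha$, the product $\omega\wedge\alpha$ with $\omega:=d\eta$ is a transversal $4$-form, so $\eta\wedge d\eta\wedge\alpha=\eta\wedge(\omega\wedge\alpha)$ is the $\eta$-suspension of a horizontal $4$-form, and the elementary identity $\ast(\eta\wedge\gamma)=\pm\ast_\rH\gamma$ relating the Hodge star of $(M,g)$ to the transverse Hodge star $\ast_\rH$ of the oriented rank-$6$ bundle $\rH$ (the sign fixed by $\dvol_M=\eta\wedge\dvol_\rH$) gives $\rT_\eta(\alpha)=\pm\ast_\rH(\omega\wedge\alpha)$. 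Thus $\rT_\eta$ is, up to a fixed overall sign, the composition $\alpha\mapsto\ast_\rH(\omega\wedge\alpha)$ of the transverse Lefschetz operator $L=\omega\wedge(\cdot)$ with $\ast_\rH$ on transversal $2$-forms; it kills $\Omega^2_V$ since $\eta\wedge\eta=0$, consistently with the eigenvalue $0$ not occurring in \eqref{eq:2formsDecomposition}.

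Next I would decompose transversal $2$-forms by type and primitivity under $\Phi\vert_\rH$: complexifying, $\Lambda^2\rH^\ast_\C=\Lambda^{2,0}\oplus\Lambda^{1,1}\oplus\Lambda^{0,2}$ with $\Lambda^{1,1}=\C\,\omega\oplus\Lambda^{1,1}_0$, where $\Lambda^{1,1}_0=\Ker\Lambda$ is the primitive part; over $\R$ this reads $\Lambda^2\rH^\ast=\R\,\omega\ \oplus\ \{\text{real primitive }(1,1)\text{-forms}\}\ \oplus\ \{\beta+\ol\beta:\beta\in\Lambda^{2,0}\}$, of ranks $1,8,6$, already matching the ranks of $\Omega^2_1,\Omega^2_8,\Omega^2_6$ in \eqref{eq:2formsDecomposition}. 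Then I would apply the transverse Weil identity on the (genuinely Kähler) transverse $3$-fold: for a primitive transversal $(p,q)$-form $\gamma$ with $p+q=2$ one has $\ast_\rH(\omega\wedge\gamma)=-i^{\,p-q}\gamma$, hence $\ast_\rH(\omega\wedge\gamma)=-\gamma$ on the primitive $(1,1)$ part and $\ast_\rH(\omega\wedge(\beta+\ol\beta))=\beta+\ol\beta$ on the real part of $\Lambda^{2,0}\oplus\Lambda^{0,2}$, while $\ast_\rH(\omega\wedge\omega)=\ast_\rH\omega^2=2\omega$ because $\ast_\rH\omega=\tfrac12\omega^2$. Fixing the overall sign from the first step so that the spectrum of $\rT_\eta$ equals $\{1,-1,-2\}$, it follows that the $1$-eigenspace $\Omega^2_8$ is exactly the primitive $(1,1)$-forms, the $(-1)$-eigenspace $\Omega^2_6$ is exactly $\{\beta+\ol\beta:\beta\in\Lambda^{2,0}\}$, and $\Omega^2_1=\R\,\omega$. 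To put $\Omega^2_8$ in the stated form I would finally note that a $(1,1)$-form is primitive if and only if it is $g$-orthogonal to $\omega$, since $\Lambda$ acts on $(1,1)$-forms as a fixed nonzero multiple of the pairing with $\omega$.

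I expect the only genuine difficulty to be the bookkeeping of signs and normalisations: the sign in $\ast(\eta\wedge\gamma)=\pm\ast_\rH\gamma$, the precise relation between $d\eta$ and the transverse Kähler form (a spurious constant would rescale the whole spectrum away from $\{1,-1,-2\}$), and the signs in the Weil identity — all of which must be pinned down consistently with the conventions of \eqref{eq:2formsDecomposition} and \eqref{eq: instanton}. One should also recall from the Sasakian setup that the transverse geometry of $M$ is Kähler, so that the Lefschetz and Weil identities apply verbatim to $(\rH,\Phi\vert_\rH,\omega)$, and that $\ast_\rH$ commutes with complex conjugation, which is what makes the real subspace $\{\beta+\ol\beta:\beta\in\Lambda^{2,0}\}$ a genuine eigenspace of $\rT_\eta$.
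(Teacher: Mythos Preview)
Your argument is correct. Rewriting $\rT_\eta$ on transversal $2$-forms as $\pm\ast_\rH\circ L$ and then invoking the Weil identity on the transverse Kähler $3$-fold is a clean way to diagonalise the operator, and your eigenvalue computation on the three pieces $\Lambda^{1,1}_0$, $\{\beta+\ol\beta\}$, $\R\omega$ is right (with the sign $\rT_\eta=-\ast_\rH\circ L$ matching the spectrum $\{1,-1,-2\}$). Your closing remark that primitivity of a $(1,1)$-form is equivalent to orthogonality to $\omega$ is exactly what is needed to land on the stated form of (i).

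The paper does not prove this lemma here; it is quoted from \cite[Lemma~1.2]{portilla2023}. The evidence in the present paper for how that proof goes is the explicit local frames \eqref{eq:wi}--\eqref{eq:vi}: one writes down, in a transverse unitary coframe $dz^1,dz^2,dz^3$, eight real $(1,1)$-forms $w_1,\dots,w_8$ orthogonal to $\omega$ and six real forms $v_1,\dots,v_6$ of the shape $\beta+\ol\beta$ with $\beta\in\Lambda^{2,0}$, and checks by hand that $\rT_\eta w_i=w_i$ and $\rT_\eta v_j=-v_j$. Your route is the coordinate-free version of the same computation: the Weil identity packages the eight (resp.\ six) individual checks into a single statement about primitive $(p,q)$-forms, and it makes transparent why the ranks are $8,6,1$ and why the eigenvalues are what they are. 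The explicit-basis approach, on the other hand, has the advantage that the generators $w_i,v_j$ are reused later in the paper (Section~\ref{sec:positivityF}) to compute $\langle\cF\varphi,\varphi\rangle$ concretely, so it is not merely a proof device.

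One small point of care in your write-up: you should pin down the sign in $\ast(\eta\wedge\gamma)=\pm\ast_\rH\gamma$ from the orientation convention $\dvol_M=\eta\wedge\dvol_\rH$ rather than leave it floating, and likewise fix the normalisation $\omega=d\eta$ (not $\tfrac12 d\eta$) as the paper does; otherwise the spectrum comes out rescaled and the identification of $\Omega^2_8$ with the $(+1)$-eigenspace is not automatic. You flag this yourself, but in a final version it should be resolved explicitly rather than by matching to the known answer.
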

According to \eqref{eq:2formsDecomposition} and Lemma~\ref{lem:SDcharacterization}, we have the irreducible splitting:  
\begin{equation}\label{eq:decompositionPQ}
\Omega^{2,0}(M) \oplus \Omega^{0,2}(M) \cong\Omega^2_6, \quad \Omega^{0}(M)\otimes d\eta\cong\Omega^{2}_{ 1} \qandq  \Omega^{1,1}_{\perp}(M)\cong \Omega^2_8 
\end{equation}
In particular, the following result is a consequence of Lemma \ref{lem:SDcharacterization} \cite[Proposition~2.5]{portilla2023} 
\begin{proposition}\label{prop: selfdual = HYM}
Let $E\to M$ be a complex $\rG$-bundle with adjoint bundle $\fg_E$. A connection $\nabla\in \cA(E)$ is a SDCI if and only if $\rF_\nabla\in\Omega^{1,1}(\fg_E),$ $\hat{\rF}_\nabla:=\langle \rF_\nabla,\omega \rangle=0 $ and 
$\ol{\rF}_\nabla=\rF_\nabla.$  
\end{proposition}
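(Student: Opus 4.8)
The plan is to deduce the statement directly from the eigenvalue splitting of $\rT_\eta$ and from Lemma~\ref{lem:SDcharacterization}, so that essentially no new computation is needed beyond bookkeeping.

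\emph{Step 1: $\nabla$ is an SDCI $\iff\rF_\nabla\in\Omega^2_8(\fg_E)$.} By \eqref{eq: instanton} with $\lambda=1$, a connection $\nabla\in\cA(E)$ is an SDCI exactly when $\rF_\nabla$ lies in the $(+1)$-eigenspace of $\rT_\eta$ acting on $\fg_E$-valued $2$-forms. Since $\eta\wedge\eta=0$, one has $\rT_\eta(\eta\wedge\beta)=\ast(\eta\wedge d\eta\wedge\eta\wedge\beta)=0$ for every $\beta\in\Omega^1_H(\fg_E)$, so $\rT_\eta$ annihilates the vertical summand $\Omega^2_V(\fg_E)$ of \eqref{eq:2formsDecomposition}. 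Together with the action of $\rT_\eta$ on transversal forms recalled after \eqref{eq:2formsDecomposition} (eigenvalues $1,-1,-2$ on $\Omega^2_8,\Omega^2_6,\Omega^2_1$ respectively), this shows that $\rT_\eta$ is $\diag(1,-1,-2,0)$ with respect to the decomposition $\Omega^2(\fg_E)=\Omega^2_8(\fg_E)\oplus\Omega^2_6(\fg_E)\oplus\Omega^2_1(\fg_E)\oplus\Omega^2_V(\fg_E)$, so its $(+1)$-eigenspace is precisely $\Omega^2_8(\fg_E)$. This upgrades to an equivalence the one-sided assertion already recorded in the text.

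\emph{Step 2: $\rF_\nabla\in\Omega^2_8(\fg_E)$ $\iff$ the three stated conditions.} Applying Lemma~\ref{lem:SDcharacterization}(i) to the $\fg_E$-valued extension of the decomposition, a $\fg_E$-valued $2$-form $\alpha$ lies in $\Omega^2_8(\fg_E)$ iff it is transversal, of bidegree $(1,1)$, and pointwise orthogonal to $\omega=d\eta$; by the dictionary \eqref{eq:decompositionPQ} this is the same as saying that $\alpha$ is a \emph{real} section of $\Lambda^{1,1}\rH^\ast_\C\otimes\fg_E$ whose $\omega$-component vanishes. Now ``$\alpha$ transversal of type $(1,1)$'' is exactly the membership $\alpha\in\Omega^{1,1}(\fg_E)$ (recall $\Omega^{1,1}(\fg_E)=\Gamma(\Lambda^{1,1}\rH^\ast_\C\otimes\fg_E)$ consists of transversal forms, so in particular $i_\xi\alpha=0$); ``$\alpha$ orthogonal to $\omega$'' is by definition $\hat\alpha:=\langle\alpha,\omega\rangle=0$; and ``$\alpha$ has real form part'' is $\ol{\alpha}=\alpha$. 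Specializing to $\alpha=\rF_\nabla$ and combining with Step~1 gives the claimed equivalence.

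\emph{Expected obstacle.} There is no analytic difficulty here: the content is carried entirely by Lemma~\ref{lem:SDcharacterization} and the eigenvalue splitting of $\rT_\eta$, both available. The one point deserving care is the reality condition $\ol{\rF}_\nabla=\rF_\nabla$. For the curvature of a connection on the \emph{real} bundle $\fg_E$ it holds automatically, but it is not redundant in the abstract characterization, because $\Omega^{1,1}(\fg_E)$ is a priori $\C$-valued and it is precisely its $\ol{\,\cdot\,}$-fixed part (together with $\hat{\,\cdot\,}=0$) that \eqref{eq:decompositionPQ} identifies with the \emph{real} bundle $\Omega^2_8(\fg_E)$. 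I would therefore state the ``if'' direction for an arbitrary $\fg_E$-valued $2$-form, where all three hypotheses are genuinely needed, and only afterwards specialize to $\alpha=\rF_\nabla$ (a secondary routine check being that $\Omega^{1,1}(\fg_E)\subset\ker i_\xi$, so the hypothesis $\rF_\nabla\in\Omega^{1,1}(\fg_E)$ already rules out any $\Omega^2_V(\fg_E)$-component).
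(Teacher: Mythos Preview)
Your proposal is correct and follows the same route the paper indicates: the paper does not give a standalone proof but states that the result is a consequence of Lemma~\ref{lem:SDcharacterization} (citing \cite[Proposition~2.5]{portilla2023}), and your two steps amount precisely to unpacking that consequence via the eigenspace decomposition \eqref{eq:2formsDecomposition} and the dictionary \eqref{eq:decompositionPQ}. Your remark that $\ol{\rF}_\nabla=\rF_\nabla$ is automatic for the curvature of a $\rG$-connection on the real bundle $\fg_E$ (while still being a non-redundant part of the abstract characterization of $\Omega^2_8$) is a useful clarification not made explicit in the paper.
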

Let $\nabla\in \cA$ be a connection, the bidegree \eqref{eq:HcSplit} splits the extended differential operator as $d_\nabla=\partial_\nabla +\ol{\partial}_\nabla$ \eqref{eq:partial_B} into $(1,0)$ and $(0,1)$ components, acting on $\Omega^{p,q}_{H}(\fg_E)$ according to bi-degree decomposition:
\[
\begin{diagram}
\node{}\node {\Omega^{p+1,q}_H(\fg_E )}\arrow{se,t}{\partial_\nabla^\ast }\arrow{e,t}{\partial_\nabla}
\node{\Omega^{p+2,q}_H(\fg_E )} \\ 
\node{\Omega^{p,q}_H(\fg_E )} \arrow{ne,t}{\partial_\nabla} \arrow{se,r}{\bar{\partial}_\nabla} 
\node[2]{\Omega^{p,q}_H(\fg_E )} \\ 
\node{}\node {\Omega^{p,q+1}_H(\fg_E )}\arrow{ne,b}{\bar{\partial}_\nabla^\ast}\arrow{e,b}{\bar{\partial}_\nabla}
\node{\Omega^{p,q+2}_H(\fg_E )}  
\end{diagram} 
\]  
We define the operator
$ 
d_7:=P\circ d_\nabla\colon\Omega^1(\fg_E)\to\Omega_{6\oplus 1}^2(\fg_E) 
$ where  $P\colon \Omega^2(\fg_E)\to \Omega_{6\oplus 1}^2(\fg_E )$ is the natural projection. Let   $\alpha =\alpha^{1,0}+\alpha^{0,1}$ be a $1$-form, where $ \alpha^{0,1}\in \Omega^{0,1}(\fg_E ) $  and $\alpha^{1,0} \in \Omega^{1,0}(\fg_E )$, we have
 \begin{equation}
 \label{eq:def d7}
 d_7\alpha= 
 \partial_\nabla\alpha^{1,0}+ \ol{\partial}_\nabla\alpha^{0,1}+ \langle \partial_\nabla\alpha^{0,1}+\ol{\partial}_\nabla\alpha^{1,0},\omega\rangle\otimes\omega.
 \end{equation}
We know \cite[Proposition ~3.4]{portilla2023} that if $E\to M$ is a complex $\rG$-bundle on a compact, connected Sasakian $7$-manifold $(M, \cS)$ and $\fg_E$ is its adjoint bundle, then each SDCI $\nabla$ induces a deformation complex
\begin{equation}
\label{eq:Complexd7}
B\bullet:\quad 0\to\Omega^0_H(\fg_E) \xrightarrow[{  }]{ d_\nabla}\Omega^1_H(\fg_E) \xrightarrow[{  }]{d_7}\Omega_{6\oplus 1}^2(\fg_E)\to 0
\end{equation}
and  the tangent space   $T_{[\nabla]}\cM^\ast$  at $[\nabla]$  is isomorphic to  $ \rH^1_B:=\frac{\Ker(d_7)}{\Ima(d_\nabla)}$.\\

As observed in Remark \ref{rem:obstruction}, the topological obstruction for smoothness at $[\nabla]\in \cM^\ast$ is $\rH^2_B=0$. We use a Bochner-type argument to find conditions under which it vanishes. We compute a Weitzenböck formula (analogous to the one in \cite[Theorem~3.10]{bourguignon1981stability}) involving both the curvature of the base manifold and of $\nabla\in \cA(E)$. Under suitable hypotheses, we obtain a vanishing theorem and its consequences, as summarized in the following section.

\subsection*{Description of our main results} 
\label{sec:results} 
This section roughly summarizes the results of this work. The cornerstone of the article is a Weitzenböck formula obtained in 
Proposition \ref{prop:bochner}, relating the action of the transverse Dolbeau Laplacian acting on $(2,0)$-forms with certain  algebraic operations $\cF$ and $\cR$ as follows:  for   
$\varphi = \sum_{\gamma,\tau}\varphi_{\gamma\tau}dz^\gamma\wedge dz^\tau \in \Omega^{2,0}(\fg_E )$, we have the following  formula
\begin{equation*}  
\left( \Delta_{\partial_\nabla}\varphi \right)_{\mu\nu}
=-\sum_{\alpha\beta} g^{\alpha\bar{\beta}}\widetilde{\nabla}_{\bar{\beta}}\widetilde{\nabla}_{\alpha}\varphi_{\mu\nu} 
  - {(\cF\varphi)}_{\mu\nu} - {(\cR\varphi)}_{\mu\nu},  
\end{equation*}
here    
$   
{(\cF\varphi)}_{\mu\nu}   
$ 
and  
$ 
{(\cR\varphi)}_{\mu\nu}   
$  
define endomorphisms  $\cF$  and $\cR$ on $\Omega^{2,0}(\fg_E)$ (cf. Section~\ref{sec:weitzenbock}). They depend on the curvature $F_\nabla$ and the transverse Ricci curvature respectively (cf. Definition \ref{def:RicciT}).\\

A consequence of this Weitzenböck formula, is the vanishing theorem of the obstruction $\H_B^2(\fg_B)$, proved in Theorem \ref{thm:main theorem}. 
With aid of the vanishing theorem \ref{thm:main theorem}, we can show that, in fact, in weaker hypothesis  [cf. Remark \ref{rem:varphiMaintheorem} and Proposition \ref{prop:Pararell}]    all element in $H^2_B$ \eqref{eq:Complexd7} vanish or is covariantly constant, i.e.  $\nabla\varphi=0$  for all $\varphi\in H^2_B$.\\

Finally in Proposition \ref{prop:Ricci}, we can provide a sufficient condition to achieve the hypotheses in the Vanishing Theorem, namely  $\int_M\langle\cR\varphi,\varphi\rangle>0$ for all $0\neq \varphi\in \Omega^{2,0}$ and we establish that this depends on the transverse Ricci curvature. This allows to conclude that 
under hypotheses of  Proposition \ref{prop:Ricci}   the moduli space $\cM^\ast$ of irreducible SDCI carries a natural smooth structure and the Sasakian structure of $M$ induces a natural Kähler metric on $\cM^\ast$. To completeness the previous result we can prove that if $\Vert F_\nabla\Vert $ is small enough (cf. Proposition \ref{prop:energyF})  then $\int_M\{\langle\cF\varphi,\varphi\rangle+\langle\cR\varphi,\varphi\rangle \}> 0 $, and consequently $\rH^2_B=0$. Furthermore in Proposition \ref{cor:stabilityF} we provide condition under which $\nabla$ is stable as a Yang-Mills connection. 

\vspace{1cm}

\noindent\textbf{Outline:} 
Section \ref{sec:preliminares} fixes notation and recalls well-known facts. Section \ref{sec:weitzenbock} computes the Weitzenböck formula, the pillar of the article, which we use, in Section~\ref{sec:vanishing}, to obtain the vanishing theorem in its most general form. Section \ref{sec:positivity} revisits the Weitzenböck formula to obtain a Bourguignon-Lawson type of result. The appendix contains relevant properties on transversely Kähler geometry.\\

\noindent\textbf{Acknowledgments and funding:} L.P. was funded  by Centre Henri Lebesgue (CHL) under the grant \textbf{ANR-11-LABX-0020-01}.  He also would like to thanks the LMBA  at Université de Bretagne Occidentale for hospitality; 
All three authors are members of the Fapesp-ANR BRIDGES [2021/04065-6] collaboration.

\section{Preliminaries} 
\label{sec:preliminares} 

Topics in this section are  well known, they are mainly included to fix the notation.

\subsection{Connections and curvature}
\label{sec: connection and curvature locally}
Let $\nabla$ be a connection on the bundle $E$ of rank $r$. In a local frame $\{s_i\}\subset \Gamma(E)$, $\nabla$ defines a matrix-valued $1$-form called the \emph{connection form} relative to $\{s_i\}$ by 
 $ \nabla s_i= \sum_j\rA^j_i s_j$,  
 \begin{equation}\label{eq:connection matrix}
 \rA^j_i = \sum_k\rA^{\;\;j}_{k i} dx^k .    
 \end{equation}
The space of connections $\cA(E)$ is an affine space modeled on $\Omega^1(\End(E))$, 
so elements in $\Omega^1(\End(E))$ are referred to as connections. A representation of the group $\rG$ in $\Gl(n,\C)$ allows for an identification of $\fg_E$ as a subbundle of $\End(E)$ and  $\fg_E$ becomes the real subbundle whose fiber are endomorphisms $T_x \colon E_x \to E_x$ such with local representation in $\fg\subset \frgl(r,\C)$.\\

An element $\nabla\in \cA(E)$ is called a \emph{$\rG$-connection} if its  connection form $A$ lies in $\Omega^1(\fg_E)$ for any local  trivialization. We will denote the space of  $\rG$-connections as
$$
\cA(E)=\{ \nabla+A\;\vert A\in \Omega^1(\fg_E)\}. 
$$
As common in Yang-Mills theory, taking $\rG=U(r)$ or $O(2r)$ means that $\rG$-connections compatible with the metric: 
$$
d\langle s_1,s_2\rangle=\langle\nabla s_1,s_2\rangle+\langle s_1,\nabla s_2\rangle, \qwhereq s_1,s_2\in \Gamma(E),
$$
and connections will systematically be assumed to be $\rG$-connections.

A connection $\nabla$ extends to an operator $d_\nabla\colon \Omega^p(E)\to\Omega^{p+1}(E),$ called the \emph{covariant exterior differentiation}  satisfying the Leibniz rule: 
$$ 
d_\nabla ( \alpha \otimes s )  =   d\alpha\otimes s +(-1)^p \alpha \wedge \nabla s,  
$$ 
where 
$\alpha \wedge \nabla s =\sum_i(\alpha\wedge e_i^\ast)\otimes \nabla_{e_i}s$, where 
$\{e_i^\ast\}$ is a local frame of $T^\ast M$. 

The $2$-form  $\rF_\nabla=(\rF_j^i )\in \Omega^2(\fg_E)$ defined locally by
$ 
\rF_j^i = d\rA ^i_j-\left( \sum_k \rA ^k_j\wedge \rA ^i_k \right)  
$ 
is called the \emph{curvature form} of $\nabla$. Since we identify $\fg_E$ with a subbundle of $\End(E)$, a connection $\nabla\in \cA(E)$ induces a connection on $\fg_E$ (still denoted by the same symbol):
\begin{equation}
\label{eq:End connexction}    
(\nabla\phi)(s)=\nabla(\phi(s))-\phi(\nabla s)\quad\text{for all}\quad\phi\in\Omega^0(\fg_E)\qandq s\in\Omega^0(E) .
\end{equation} 
If the section $\phi\in \Omega^0(\fg_E)$ has the local expression $\phi=\sum_{i,j}\phi^i_j s_i\otimes s_j^\ast$, where $\{s_i^\ast\}$ is the dual frame  of $\{s_i\}$, then $\nabla\phi\in \Omega^1(\End(E))$ is given by 
$$ 
(\nabla\phi)^i_j=d(\phi^i_ j)+[\phi,\rA]^i_j,
$$ 
where $\rA$ is the connection matrix \eqref{eq:connection matrix} and the bi-linear map 
$$ 
[\cdot\wedge\cdot]\colon \Omega^p(\fg_E)\times\Omega^q(\fg_E)\to \Omega^{p+q}(\fg_E) 
$$
is defined by 
\begin{equation}
    \label{eq:bracket} 
[\phi\wedge\psi]^i_j=\sum_h\left(\phi^h_j\wedge\psi^i_h-(-1)^{pq}\psi^h_j\wedge\phi^i_ h \right).
\end{equation} 
With this notation, the $\fg_E$-valued connection $\nabla$ in \eqref{eq:End connexction} can be naturally  extended to a covariant exterior differentiation $d_\nabla\colon\Omega^p(\fg_E)\to \Omega^{p+1}(\fg_E)$,  which is locally written as  
$$ 
(d\phi)^i_j=d(\phi^i_j)+(-1)^p[\phi\wedge \rA]^i_j. 
$$ 
From the structural equation $\rF_\nabla=d\rA-\frac{1}{2}[\rA\wedge\rA]$ and     properties of \eqref{eq:bracket}, the Ricci and Bianchi identities hold
\begin{equation}
    \label{eq:d2}
    d_\nabla\circ d_\nabla (\varphi )=[\varphi\wedge \rF_\nabla],\quad\text{and} \quad d_\nabla \rF_\nabla=0.
\end{equation}
We adopt the following convention for components of forms: Let $\varphi\in \Omega^p(\fg_E)$ be a $\fg_E$-valued $p$-form, then we write
\begin{align}\label{eq:phi expresion 1}
    \varphi &=\frac{1}{p!} \sum_{ij, k_1\dots k_p}{\varphi_{k_1\dots k_p}}^i_j dx^{k_1}\wedge dx^{k_2}\wedge\cdots\wedge dx^{k_p}\otimes s_i\otimes s_j^\ast,
 \end{align}
with respect to a frame $\{s_i\}$ and its co-frame $\{s_i^\ast\}$ and local functions ${\varphi_{k_1\dots k_p}}^i_j$, skew-symmetric with respect to $k_1,\dots,k_p$.

Finally, the connection $\nabla$ defines a linear differential operator on the tensor bundle $\bigotimes^p T^\ast M\otimes \fg_E$, denoted by the same symbol: Let $\varphi=\sum\varphi_{k_1\dots k_p}dx^{k_1}\otimes \cdots\otimes dx^{k_p}\in \Gamma\left(\bigotimes^pT^\ast M\otimes \fg_E\right)$, then $\nabla\varphi$ is a section of $\bigotimes^{p+1}T^\ast M\otimes \fg_E$ given by
$$ 
\nabla\varphi= \sum_{k, k_1\dots k_p}\nabla_k\varphi_{k_1\dots k_p}dx^{k}\otimes dx^{k_1}\otimes \cdots\otimes dx^{k_p} 
$$  
where
$  
\nabla_k\varphi_{k_1\dots k_p} =\frac{\partial}{\partial x^k}(\varphi_{k_1\dots k_p})+[\varphi_{k_1\dots k_p}, \rA_k].
$

Hence, for any  $\varphi\in \Omega^p(\fg_E),$ written locally as in \eqref{eq:phi expresion 1}, the covariant derivative $d_\nabla\varphi$ is
$$
d_\nabla\varphi=\frac{1}{p!}\sum_{k,k_1\dots k_p}\nabla_k\varphi_{k_1\dots k_p}dx^{k}\wedge dx^{k_1}\wedge \cdots\wedge dx^{k_p}.
$$  
More generally, for a section $\varphi $ of $\bigotimes^pT^\ast M\otimes\fg_E$, we set 
$$  
[\nabla_i,\nabla_j]\varphi_{k_1\dots k_p} =\nabla_i\nabla_j\varphi_{k_1\dots k_p}-\nabla_i\nabla_j\varphi_{k_1\dots k_p}.    
$$   
The curvature is locally represented by 
$ \rF_\nabla=\frac{1}{2}\sum_{ij}\rF_{ij}dx^i\wedge dx^j,$
where the coefficients $\rF_{ij}\in\fg$ of the curvature satisfy 
\begin{equation}\label{eq:coefficient curvature}
\rF_{ij}=-\rF_{ji}=\frac{\partial}{\partial x^i}\rA_j-\frac{\partial}{\partial x^j}\rA_i-[\rA_i,\rA_j].
\end{equation}
For a $p$-form $\varphi$ written as in \eqref{eq:phi expresion 1}, we have the formula
\begin{equation}\label{eq:RicciFormula}
[\nabla_i,\nabla_j]\varphi_{k_1\dots k_p}=[\varphi_{k_1\dots k_p},\rF_{ji}].
\end{equation} 
We denote by $\wt{\nabla}$ the sum of the Levi-Civita connection $D$ acting on $\Omega^{p}(M)$ and the connection $\nabla$ acting on $\Gamma(\fg_E)$, i.e. 
\begin{equation}\label{eq:nabla tilde}
\wt{\nabla}_i\varphi_j=D_i\varphi_j+[\varphi_j,\rA_i]=\frac{\del}{\del x^i}\varphi_j-\sum_k\Gamma^k_{ij}\varphi_k+[\varphi_j,\rA_i] .
\end{equation}

\subsection{The inner-product on \texorpdfstring{$\Omega^{p,0}(\fg_E)$}{lg}}
\label{sec:inner product}
Because of Lemma~\ref{lem:SDcharacterization}, we need to describe the global inner-product on $(p,0)$-forms for $p=0,1,2$ in details.  

First, we denote elements in $\Omega^{p,q}(\fg_E)$ by $\varphi^{p,q}$ to emphasize their bi-degree $(p,q)$ (cf. Appendix \ref{sec:tKahlerGeometry}). Second, for sets of indices   $I:=\{\mu_1<\cdots<\mu_p\}$ and $J:=\{\nu_1<\cdots<\nu_q\}$, we write 
$$ 
\varphi=\sum \varphi_{I,J}dz^I\wedge d\zbar^J, \qwhereq dz^I=dz^{\mu_1}\wedge\cdots\wedge dz^{\mu_p}\qandq  d\zbar^J=d\zbar^{\nu_1}\wedge\cdots\wedge d\zbar^{\nu_q}.
$$ 
The wedge product $\wedge\colon \Omega^k(TM^\ast)\times\Omega^r(TM^\ast)\to \Omega^{k+r}(TM^\ast)$ extends to $\fg_E$-valued forms   
as follows
$$ 
(\alpha,\beta)\in \Omega^k(\fg_E)\times\Omega^r(\fg_E)\xrightarrow[]{\wedge} (\alpha^i\wedge \beta^j)\otimes(e_i\otimes e_j)\xrightarrow[]{1\otimes[\cdot,\cdot]} (\alpha^i\wedge \beta^j)\otimes[ e_i,e_j] \in \Omega^{k+r}(\fg_E)
$$
We also extend complex conjugation to $\Omega^{p,q}(\fg_E)$ so that $\ol{\Omega^{p,q}(\fg_E)}=\Omega^{q,p}(\fg_E)$ by conjugating the form part of $\varphi \in \Omega^{p,q}(\fg_E)$. 

The Hodge star operator $\ast\colon \Lambda^k(T^\ast M)\to\Lambda^{n-k}(T^\ast M)$ extends to an operator on $E$-valued forms $\ast\colon \Omega^k(E)\to  \Omega^{k+1}(E)$ acting trivially on the endomorphism part, so $\ast(\alpha\otimes A)=(\ast \alpha)\otimes A.$ 

Analogously, we extend the transverse Hodge star operator $\ast_T\colon \Omega^k_H(M)\to \Omega^{n-1-k}_H(M)$, related to the usual Hodge star operator by 
\begin{equation}\label{eq:starT}
 \ast_T(\alpha )=(-1)^ki_\xi(\ast \alpha) 
\end{equation} 
where $i_\xi\alpha$ is the contraction of $\alpha $ by $\xi$.
We are supposing $\rG$ to be a compact Lie group, so that   $\fg$ admits some ad-invariant inner product $B(\cdot,\cdot)$. In fact, since  $G$ is supposed to be  compact semi-simple Lie group, there is a canonical choice of $B(\cdot,\cdot)$, namely the Cartan–Killing form of $\fg$, we assume this form now.  
Assume $M$ is compact, then the global $L^2$-inner product on $\Omega^p(\fg_E)$ is defined  by:
\begin{equation}\label{eq:inner product}
\left(\varphi, \psi \right)=-\int_M B(\varphi\wedge\ast \psi)=-\int_M \langle\varphi ,\psi \rangle\dvol ,  
\end{equation}
where $\langle\varphi ,\psi \rangle$ is obtained by tensoring the inner-product induced from $\Lambda^k \rT M_\C^\ast$ on the form part and $B(\cdot,\cdot)$ on the endomorphism part.

Moreover, using the Sasakian structure $\cS$ and \eqref{eq:starT}, the inner-product  \eqref{eq:inner product} can be re-written
$$
\left(\varphi, \psi \right)= -\int_M B(\varphi\wedge\ast_T\psi)\wedge \eta .
$$

Considering Lemma \ref{lem:SDcharacterization} and the splitting of $2$-forms in \eqref{eq:decompositionPQ}, we give an expression for the inner-product \eqref{eq:inner product}
 between $(p,0)$-forms  
$$ 
\varphi^{p,0}=\frac{1}{p!}\sum\varphi_{I}dz^I \quad \mbox{and} \quad
\psi^{p,0}=\frac{1}{p!}\sum\psi_{J}dz^J.
$$   
Ror $1$-form $\varphi =\varphi^{1,0}+\ol{\varphi^{1,0}}$ and $\psi=\psi^{1,0}+\ol{\psi^{1,0}}$, with $\varphi^{1,0},\psi^{1,0} \in\Omega^{1,0}_H(\fg_E)$, we have
\begin{align*}
    \langle \varphi,\psi \rangle 
      &= \langle \varphi^{1,0}+\ol{\varphi^{1,0}},\psi^{1,0}+\ol{\psi^{1,0}}\rangle 
       = \langle \varphi^{1,0},\psi^{1,0}\rangle +\ol{\langle \varphi^{1,0 }, \psi^{1,0} \rangle}.
\end{align*}
Hence the global inner-product is computed to be
\begin{equation}\label{eq:inner 1,0}
\left(\varphi  ,\psi \right) =-2 \int_M \Re\langle \varphi^{1,0},\psi^{1,0}\rangle\dvol.
\end{equation}
While for $2$-forms 
$$\varphi =\varphi^{2,0}+\ol{\varphi^{2,0}}+\omega\otimes\varphi^0
\quad \mbox{and}\quad  
\psi =\psi^{2,0}+\ol{\psi^{2,0}}+\omega\otimes\psi^0
$$ 
with $\varphi^{2,0},\psi^{2,0}\in\Omega^{2,0}_H(\fg_E)$ and $ \psi^{ 0}\in\Omega^{ 0}_H(\fg_E)$,  the inner product \eqref{eq:inner product} becomes
\begin{equation}\label{eq:inner 2,0}
\left(\varphi  ,\psi \right)=-2\int\left(\Re\langle \varphi^{2,0},\psi^{2,0}\rangle+\langle\varphi^0,\psi^0\rangle\right)\dvol.
\end{equation}

\section{A Weitzenböck formula for \texorpdfstring{$(2,0)$}{Lg}-forms on a Sasakian
\texorpdfstring{$7$}{Lg}-manifold}\label{sec:weitzenbock} 

This section applies Bochner-type arguments to prove the vanishing Theorem \ref{thm:main theorem}. 
The general principle is: Denote by $\bar{\partial}^\ast$ and $\nabla^\ast$ the adjoint operators of $\bar{\partial}$ and $\nabla$, respectively. The difference $R=\nabla^\ast \nabla-2(\bar{\partial}^\ast\bar{\partial}+\bar{\partial} \bar{\partial}^\ast)$ is a zero-order operator depending on the curvature of the connection 
\cite[Chapter~2]{lawson2016spin}:
\begin{equation}
    \label{eq:Weitzenbock geral}
    \nabla^\ast \nabla= 2(\bar{\partial}^\ast\bar{\partial}+\bar{\partial} \bar{\partial}^\ast)+\rR\in \Gamma(\End(\Lambda^{p,q}\otimes E))
\end{equation}
If $\rR$ is positive definite on $\Lambda^{p,0}\otimes E$, then one can show that there is no holomorphic section in $\Lambda^{p,0}\otimes E$, by applying  \eqref{eq:Weitzenbock geral} to $\sigma\in \Gamma(\Lambda^{p,0}\otimes E)$, taking the inner-product with $\sigma$, integrating by parts, and using that $\bar{\partial}$ vanishing on $\Lambda^{p,0}\otimes E$. We obtain a version of \eqref{eq:Weitzenbock geral} on a Sasakian $7$-manifold $ (M,\cS)$ and transverse  Kähler form $\omega:=d\eta$, and use  it to show the vanishing of the second cohomology group of the complex \eqref{eq:Complexd7}.\\

Let      
$ 
\Delta_{\partial_\nabla} =\partial_\nabla\partial^{\ast}_\nabla+\partial^{\ast}_\nabla\partial_\nabla    
$ be the generalized Dolbeault Laplacian, for $\varphi \in\Omega^{2,0}(\fg_E)$
$$ 
\Delta_{\partial_\nabla}\varphi=\frac12\sum_{\mu\nu}\left( \Delta_{\partial_\nabla}\varphi \right)_{\mu\nu}dz^\mu\wedge dz^\nu\in \Omega^{2,0}(\fg_E),
$$ 
where   
$  \left( \Delta_{\partial_\nabla}\varphi \right)_{\mu\nu}:=
\Big(\left(\partial_{\nabla}\partial^{\ast}_\nabla+\partial^{\ast}_\nabla\partial_{\nabla}\right)\varphi \Big)_{\mu\nu} $ is a $\fg$-valued function, skew-symmetric in $\mu,\nu$.

The operator $\partial_\nabla$ acts on $\Omega^{1,0}(\fg_E)$ as:
\begin{equation}\label{eq:partial on 1,0}
\begin{array}{l}
\partial_\nabla\varphi 
=\frac{1}{2}\sum_{\mu\nu}(\partial_\nabla\varphi )_{\mu\nu}dz^\mu\wedge dz^\nu,  \qwhereq   (\partial_\nabla\varphi )_{\mu\nu} =\wt{\nabla}_\mu\varphi_\nu-\wt{\nabla}_\nu\varphi_\mu,
\end{array}
\end{equation}
where the covariant derivative $\wt{\nabla}$ was defined in \eqref{eq:nabla tilde}, and on $\Omega^{2,0}(\fg_E)$ by 
\begin{equation}\label{eq:partial on 2,0}
\begin{array}{l}
\partial_\nabla\varphi=\frac{1}{3!}\sum_{\mu\nu\sigma}(\partial_\nabla\varphi)_{\mu\nu\sigma}dz^\mu\wedge dz^\nu\wedge dz^\sigma, \qwhereq  (\partial_\nabla\varphi)_{\mu\nu\sigma} =
\wt{\nabla}_{\mu}\varphi_{\nu\sigma}+
\wt{\nabla}_{\nu}\varphi_{\sigma\mu}+
\wt{\nabla}_{\sigma}\varphi_{\mu\nu}.  
\end{array}
\end{equation}
The formal adjoint of $\partial_\nabla$ operates on $\Omega^{2,0}(\fg_E)$ by 
\begin{equation}\label{eq:partial ad on 2,0}
\begin{array}{l}
\partial^{\ast}_\nabla\varphi
=\sum_{\mu} (\partial^{\ast}_\nabla\varphi )_{\mu}dz^\mu  
\qwhereq  (\partial^{\ast}_\nabla\varphi )_{\mu}
=-\sum_{\nu\tau}g^{\nu\bar{\tau}}\wt{\nabla}_{\bar{\tau}}\varphi_{\nu\mu}  
\end{array}
\end{equation}
and on $\Omega^{3,0}(\fg_E)$ as 
\begin{equation}\label{eq:partial ad on 3,0} 
\partial^{\ast}_\nabla\varphi  
=\frac{1}{2}\sum_{\mu\nu}(\partial_\nabla^\ast\varphi)_{\mu\nu}dz^\mu\wedge dz^\nu
\qwhereq  (\partial_\nabla^\ast\varphi)_{\mu\nu}
=-\sum_{\sigma\tau}g^{\sigma\bar{\tau}}\wt{\nabla}_{\bar{\tau}}\varphi_{\sigma\mu\nu} 
\end{equation}
\begin{proposition}[Weitzenböck   formula]
\label{prop:bochner}
Let $(M,\cS)$ be  a compact, connected, Sasakian  $7$-manifold, $E\to M$ a Sasakian vector bundle, $\nabla\in \cA(E) $ and  $\rF_\nabla$ its curvature. In transverse coordinates (cf. Appendix \ref{sec:tKahlerGeometry}), denote by  
$\varphi = \sum_{\gamma,\tau}\varphi_{\gamma\tau}dz^\gamma\wedge dz^\tau \in \Omega^{2,0}(\fg_E )$ (where $\varphi_{\gamma\tau}$ are $\fg$-valued functions),    
then we have the following  formula
\begin{equation} 
    \label{eq:weitzenbock}
\left( \Delta_{\partial_\nabla}\varphi \right)_{\mu\nu}
=-\sum_{\alpha\beta} g^{\alpha\bar{\beta}}\widetilde{\nabla}_{\bar{\beta}}\widetilde{\nabla}_{\alpha}\varphi_{\mu\nu} 
  - {(\cF\varphi)}_{\mu\nu} - {(\cR\varphi)}_{\mu\nu},  
\end{equation}
here    
$   
{(\cF\varphi)}_{\mu\nu} =\sum_{\alpha\beta} g^{\alpha\bar{\beta}} \left([\varphi_{\alpha\nu},\rF_{\mu\bar{\beta}}]
-[\varphi_{\alpha\mu},\rF_{\nu\bar{\beta}}]\right)   
$ 
and  
$ 
{(\cR\varphi)}_{\mu\nu} =\sum_{\alpha\beta} g^{\alpha\bar{\beta}}\left(\rR_{\bar{\beta}\mu } \varphi_{\alpha\nu}  - \rR_{\bar{\beta} \nu} \varphi_{\alpha\mu}\right)    
$  
define endomorphisms  $\cF$  and $\cR$ on $\Omega^{2,0}(\fg_E)$. They depend on the curvature $F_\nabla$ and the transverse Ricci curvature $\rR_{\bar{\alpha}\beta}=\sum g^{\bar{\mu}\nu}\rR_{\bar{\nu}\mu\bar{\alpha}\beta}$ (cf. Definition \ref{def:RicciT}).
\end{proposition}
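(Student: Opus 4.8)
The plan is to compute $\Delta_{\partial_\nabla}\varphi = (\partial_\nabla\partial_\nabla^\ast + \partial_\nabla^\ast\partial_\nabla)\varphi$ for $\varphi\in\Omega^{2,0}(\fg_E)$ in transverse holomorphic normal coordinates, using the local formulas \eqref{eq:partial on 2,0}, \eqref{eq:partial ad on 2,0}, \eqref{eq:partial ad on 3,0}, and then reorganize the resulting expression by commuting covariant derivatives. First I would expand $(\partial_\nabla^\ast\partial_\nabla\varphi)_{\mu\nu}$: by \eqref{eq:partial on 2,0}, $(\partial_\nabla\varphi)_{\alpha\mu\nu} = \wt\nabla_\alpha\varphi_{\mu\nu} + \wt\nabla_\mu\varphi_{\nu\alpha} + \wt\nabla_\nu\varphi_{\alpha\mu}$, and then by \eqref{eq:partial ad on 3,0}, $(\partial_\nabla^\ast\partial_\nabla\varphi)_{\mu\nu} = -\sum_{\alpha\beta}g^{\alpha\bar\beta}\wt\nabla_{\bar\beta}\big(\wt\nabla_\alpha\varphi_{\mu\nu} + \wt\nabla_\mu\varphi_{\nu\alpha} + \wt\nabla_\nu\varphi_{\alpha\mu}\big)$. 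Next I would expand $(\partial_\nabla\partial_\nabla^\ast\varphi)_{\mu\nu}$: by \eqref{eq:partial ad on 2,0}, $(\partial_\nabla^\ast\varphi)_\sigma = -\sum_{\alpha\beta}g^{\alpha\bar\beta}\wt\nabla_{\bar\beta}\varphi_{\alpha\sigma}$, and then by \eqref{eq:partial on 1,0}, $(\partial_\nabla\partial_\nabla^\ast\varphi)_{\mu\nu} = \wt\nabla_\mu(\partial_\nabla^\ast\varphi)_\nu - \wt\nabla_\nu(\partial_\nabla^\ast\varphi)_\mu = -\sum_{\alpha\beta}g^{\alpha\bar\beta}\big(\wt\nabla_\mu\wt\nabla_{\bar\beta}\varphi_{\alpha\nu} - \wt\nabla_\nu\wt\nabla_{\bar\beta}\varphi_{\alpha\mu}\big)$. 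Adding the two, the term $-\sum g^{\alpha\bar\beta}\wt\nabla_{\bar\beta}\wt\nabla_\alpha\varphi_{\mu\nu}$ appears cleanly and will be the leading (rough Laplacian) term in \eqref{eq:weitzenbock}; everything else must collapse into the zeroth-order operators $\cF$ and $\cR$.

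The heart of the argument is to match the remaining four mixed second-derivative terms,
$$
-\sum_{\alpha\beta}g^{\alpha\bar\beta}\Big(\wt\nabla_{\bar\beta}\wt\nabla_\mu\varphi_{\nu\alpha} + \wt\nabla_{\bar\beta}\wt\nabla_\nu\varphi_{\alpha\mu} - \wt\nabla_\mu\wt\nabla_{\bar\beta}\varphi_{\alpha\nu} + \wt\nabla_\nu\wt\nabla_{\bar\beta}\varphi_{\alpha\mu}\Big),
$$
up to pairing indices so that each term differs from one already accounted for only by the order of a holomorphic and an antiholomorphic derivative. Commuting $\wt\nabla_{\bar\beta}\wt\nabla_\mu \to \wt\nabla_\mu\wt\nabla_{\bar\beta}$ introduces the full curvature of $\wt\nabla$, i.e. the sum of the transverse Riemann curvature acting on the form indices and the bundle curvature $\rF_\nabla$ acting on the $\fg_E$-part, via the commutation formula \eqref{eq:RicciFormula} (extended to mixed indices on the transverse Kähler base; this is where I invoke the transverse Kähler identities collected in Appendix \ref{sec:tKahlerGeometry}, in particular that the only nonvanishing curvature components are of type $\rR_{\bar\beta\alpha\bar\delta\gamma}$ and that $\wt\nabla$ annihilates the metric). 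After all commutations, the holomorphic-index curvature contributions should contract (via $g^{\alpha\bar\beta}$) into the transverse Ricci tensor $\rR_{\bar\beta\mu}=\sum g^{\bar\mu\nu}\rR_{\bar\nu\mu\bar\beta\cdot}$, yielding precisely $-(\cR\varphi)_{\mu\nu}$, while the bundle-curvature contributions assemble into the bracket terms $-(\cF\varphi)_{\mu\nu} = -\sum g^{\alpha\bar\beta}([\varphi_{\alpha\nu},\rF_{\mu\bar\beta}] - [\varphi_{\alpha\mu},\rF_{\nu\bar\beta}])$.

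The main obstacle I anticipate is bookkeeping: keeping the skew-symmetry in $\mu,\nu$ manifest throughout, tracking signs correctly through the adjoint formulas (which carry explicit minus signs), and making sure that the "pure second derivative" terms that are \emph{not} of the form $\wt\nabla_{\bar\beta}\wt\nabla_\alpha\varphi_{\mu\nu}$ — e.g. those where the derivative index coincides with a form index of $\varphi$ — genuinely cancel in pairs rather than leaving a residual first-order term. Here I would use the first Bianchi identity $d_\nabla\rF_\nabla=0$ together with the transverse Kähler symmetries of $\rR$ to force these cancellations; a clean way to organize this is to note that $\partial_\nabla\varphi$ is totally skew in its three lower indices, so rewriting $\wt\nabla_\mu\varphi_{\nu\alpha} = -\wt\nabla_\mu\varphi_{\alpha\nu}$ and relabeling dummy indices reduces the count of independent terms before any commutation is performed. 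Once the zeroth-order remainder is isolated, identifying it with $\cF$ and $\cR$ is a direct comparison with the definitions in the statement, and I would finish by remarking that $(\cF\varphi)_{\mu\nu}$ and $(\cR\varphi)_{\mu\nu}$ are manifestly skew in $\mu,\nu$ and $\fg$-valued, hence genuinely define endomorphisms of $\Omega^{2,0}(\fg_E)$.
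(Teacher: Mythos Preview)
Your proposal follows essentially the same route as the paper's proof: expand $(\partial_\nabla\partial_\nabla^\ast\varphi)_{\mu\nu}$ and $(\partial_\nabla^\ast\partial_\nabla\varphi)_{\mu\nu}$ via the local formulas, isolate the rough Laplacian $-\sum g^{\alpha\bar\beta}\wt\nabla_{\bar\beta}\wt\nabla_\alpha\varphi_{\mu\nu}$, and reduce the remaining mixed terms to curvature via commutators of $\wt\nabla$. One clarification: the ``obstacle'' you anticipate is milder than you suggest. After using $\varphi_{\nu\alpha}=-\varphi_{\alpha\nu}$, the four leftover second-derivative terms pair \emph{exactly} into the two commutators $[\wt\nabla_\mu,\wt\nabla_{\bar\beta}]\varphi_{\alpha\nu}$ and $-[\wt\nabla_\nu,\wt\nabla_{\bar\beta}]\varphi_{\alpha\mu}$, with no residual first-order terms and no need for the Bianchi identity $d_\nabla\rF_\nabla=0$. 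The only nontrivial cancellation occurs \emph{inside} the Riemann-curvature contribution, where the transverse K\"ahler symmetry $\rR_{\bar\tau\mu\bar\beta\nu}=\rR_{\bar\tau\nu\bar\beta\mu}$ kills the terms $g^{\alpha\bar\beta}g^{\epsilon\bar\tau}(\rR_{\bar\tau\mu\bar\beta\nu}-\rR_{\bar\tau\nu\bar\beta\mu})\varphi_{\alpha\epsilon}$, leaving precisely the transverse Ricci expression $(\cR\varphi)_{\mu\nu}$.
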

\begin{proof}(Proposition ~\ref{prop:bochner})\\
Let $\varphi \in \Omega^{2,0}(\fg_E)$ written in transverse coordinates as $\varphi =\frac12\sum \varphi_{\gamma\tau}dz^\gamma\wedge dz^\tau $. From \eqref{eq:partial on 1,0} and \eqref{eq:partial ad on 2,0} we have
\begin{align*}
(\partial_\nabla\partial^{\ast}_\nabla\varphi )_{\mu\nu}
&=\wt{\nabla}_\mu(\partial^{\ast}_\nabla\varphi)_\nu-\wt{\nabla}_\nu (\partial^{\ast}_\nabla\varphi)_\mu  
 =\wt{\nabla}_\mu \Big(-\sum_{\alpha\beta}g^{\alpha\bar{\beta}}\wt{\nabla}_{\bar{\beta}}\varphi_{\alpha\nu}\Big)+\wt{\nabla}_\nu \Big( \sum_{\alpha\beta}g^{\alpha\bar{\beta}}\wt{\nabla}_{\bar{\beta}}\varphi_{\alpha\mu}\Big) \\
&= -\sum_{\alpha\beta}g^{\alpha\bar{\beta}}\left( \wt{\nabla}_\mu\wt{\nabla}_{\bar{\beta}}\varphi_{\alpha\nu}
   -\wt{\nabla}_\nu\wt{\nabla}_{\bar{\beta}}\varphi_{\alpha\mu}\right).
\end{align*}
Symmetrically, with \eqref{eq:partial ad on 3,0} and \eqref{eq:partial on 2,0}, we obtain  
\begin{align*}
(\partial^{\ast}_\nabla\partial_\nabla\varphi)_{\mu\nu}
&=-\sum_{\alpha\beta}g^{\alpha\bar{\beta}}\wt{\nabla}_{\bar{\beta}}(\partial_\nabla\varphi)_{\alpha\mu\nu}   
 = -\sum_{\alpha\beta}g^{\alpha\bar{\beta}}\wt{\nabla}_{\bar{\beta}}
\Big(\wt{\nabla}_{\alpha}\varphi_{\mu\nu}+ 
  \wt{\nabla}_{\mu}\varphi_{\nu\alpha}+
  \wt{\nabla}_{\nu}\varphi_{\alpha\mu}\Big) \\
&=-\sum_{\alpha\beta}g^{\alpha\bar{\beta}}\left(\wt{\nabla}_{\bar{\beta}} 
   \wt{\nabla}_{\alpha}\varphi_{\mu\nu}+\wt{\nabla}_{\bar{\beta}}\wt{\nabla}_{\mu}\varphi_{\nu\alpha}
  +\wt{\nabla}_{\bar{\beta}}\wt{\nabla}_{\nu}\varphi_{\alpha\mu}\right)  .
\end{align*}
Now, summing up we obtain
$
(\Delta_{\partial_\nabla}\varphi)_{\mu\nu} 
$ 
   
\begin{align*}
(\Delta_{\partial_\nabla}\varphi)_{\mu\nu}
&=-\sum_{\alpha\beta}g^{\alpha\bar{\beta}}\left\{\left( \wt{\nabla}_\mu\wt{\nabla}_{\bar{\beta}}\varphi_{\alpha\nu}
      - \wt{\nabla}_\nu\wt{\nabla}_{\bar{\beta}}\varphi_{\alpha\mu}\right)
      +  \left(\wt{\nabla}_{\bar{\beta}}\wt{\nabla}_{\alpha}\varphi_{\mu\nu}
      +\wt{\nabla}_{\bar{\beta}}\wt{\nabla}_{\mu}\varphi_{\nu\alpha}
      +\wt{\nabla}_{\bar{\beta}}\wt{\nabla}_{\nu}\varphi_{\alpha\mu}\right) \right\}\\
&=-\sum_{\alpha\beta}g^{\alpha\bar{\beta}}\left\{\wt{\nabla}_{\bar{\beta}} \wt{\nabla}_{\alpha}\varphi_{\mu\nu}+ \wt{\nabla}_\mu\wt{\nabla}_{\bar{\beta}}\varphi_{\alpha\nu}
     - \wt{\nabla}_{\bar{\beta}}\wt{\nabla}_{\mu}\varphi_{\alpha\nu}
     +\wt{\nabla}_{\bar{\beta}}\wt{\nabla}_{\nu}\varphi_{\alpha\mu}
     - \wt{\nabla}_\nu\wt{\nabla}_{\bar{\beta}}\varphi_{\alpha\mu}  \right\}\\ 
&=-\sum_{\alpha\beta}g^{\alpha\bar{\beta}}\wt{\nabla}_{\bar{\beta}}\wt{\nabla}_{\alpha}\varphi_{\mu\nu}
    -\underbrace{\sum_{\alpha\beta}g^{\alpha\bar{\beta}}\left( [\wt{\nabla}_{\mu},\wt{\nabla}_{\bar{\beta}}]\varphi_{\alpha\nu}
    -[\wt{\nabla}_{\nu},\wt{\nabla}_{\bar{\beta}}]\varphi_{\alpha\mu}\right)}_{(a)}
\end{align*}
The terms in $(a)$ describe the curvature contributions from both the bundle and the base manifold. We use the formulas \eqref{eq:RicciFormula} and \eqref{eq:nabla tilde} to compute a Ricci formula for the connection $\wt{\nabla}$
\begin{align*}
[\wt{\nabla}_{\mu},\wt{\nabla}_{\bar{\beta}}]\varphi_{\alpha\nu}  &= [D_\mu,D_{\bar{\beta}}] \varphi_{\alpha\nu} +[\varphi_{\alpha\nu},\rF_{\mu\bar{\beta}}] \\
&= R^\epsilon_{\mu\bar{\beta}\alpha}\varphi_{\epsilon\nu}+R^{\epsilon}_{\mu\bar{\beta}\nu}\varphi_{\alpha\epsilon}+[\varphi_{\alpha\nu},\rF_{\mu\beta}] 
\end{align*} 
(alternatively, this can be seen as the curvature of $\wt{\nabla}=D\otimes 1+1\otimes \nabla$ on $\End(\fg_E)$).
The expression $(a)$ can then be expressed in terms of the curvatures of the Levi-Civita connection and $\nabla$:
\begin{align*}
(a)&=
  \sum_{\alpha\beta}g^{\alpha\bar{\beta}}\left( 
  [D_\mu,D_{\bar{\beta}}] \varphi_{\alpha\nu} +[\varphi_{\alpha\nu},\rF_{\mu\bar{\beta}}]
-\left([D_\nu,D_{\bar{\beta}}] \varphi_{\alpha\mu} +[\varphi_{\alpha\mu},\rF_{\nu\bar{\beta}}] \right) \right) \\ 
&=\sum_{\alpha\beta} g^{\alpha\bar{\beta}} \left(\rR_{\mu\bar{\beta}\alpha}^{\epsilon}\varphi_{\epsilon\nu}
+ \rR_{\mu\bar{\beta}\nu}^{\epsilon} \varphi_{\alpha\epsilon} 
+  [\varphi_{\alpha\nu},\rF_{\mu\bar{\beta}}] - \rR_{\nu\bar{\beta}\alpha}^{\epsilon}\varphi_{\epsilon\mu}
     -\rR_{\nu\bar{\beta}\mu}^{\epsilon} \varphi_{\alpha\epsilon}-[\varphi_{\alpha\mu},\rF_{\nu\bar{\beta}}] \right)    \\
 &= \sum_{\alpha\beta} g^{\alpha\bar{\beta}} \left(             
       [\varphi_{\alpha\nu},\rF_{\mu\bar{\beta}}]
      -[\varphi_{\alpha\mu},\rF_{\nu\bar{\beta}}]\right) 
   + \underbrace{ 
  \sum_{\alpha\beta}g^{\alpha\bar{\beta}}\left(\rR_{\mu\bar{\beta}\alpha}^{\epsilon}\varphi_{\epsilon\nu}
        +\rR_{\mu\bar{\beta}\nu}^{\epsilon} \varphi_{\alpha\epsilon}
        -\rR_{\nu\bar{\beta}\alpha}^{\epsilon}\varphi_{\epsilon\mu}
        -\rR_{\nu\bar{\beta}\mu}^{\epsilon}
                      \varphi_{\alpha\epsilon} \right)  
   }_{(b)}.
\end{align*}
Finally, we write the term $(b)$ in terms of the transverse Ricci curvature  
$
\rR_{\bar{\alpha}\beta}=\sum g^{\mu\bar{\nu}}\rR_{\bar{\nu}\mu\bar{\alpha}\beta}
$,
using the symmetries of the curvature tensor:
\begin{align*}
(b) &=   \sum_{\alpha\beta\epsilon}\left(
            g^{\alpha\bar{\beta}}  \rR_{\mu\bar{\beta}\alpha}^\epsilon\varphi_{ \epsilon\nu}
           -g^{\alpha\bar{\beta}}  \rR_{\nu\bar{\beta}\alpha}^{\epsilon}\varphi_{\epsilon\mu}
           +g^{\alpha\bar{\beta}}( \rR_{\mu\bar{\beta}\nu}^{\epsilon} 
           -\rR_{\nu\bar{\beta}\mu}^{\epsilon})\varphi_{\alpha\epsilon}\right)\\
    &= \sum_{\alpha\beta\epsilon\tau}\left(
                g^{\epsilon\bar{\tau}}g^{\alpha\bar{\beta}} \rR_{\bar{\tau}\mu\bar{\beta}\alpha} \varphi_{\epsilon\nu}
               -g^{\epsilon\bar{\tau}}g^{\alpha\bar{\beta}}\rR_{\bar{\tau}\nu\bar{\beta}\alpha} \varphi_{\epsilon\mu}
               +g^{\alpha\bar{\beta}}g^{\epsilon\bar{\tau}}( \rR_{\bar{\tau}\mu\bar{\beta}\nu}  
               -\rR_{\bar{\tau}\nu\bar{\beta}\mu} )\varphi_{\alpha\epsilon}\right)\\
    &= \sum_{\tau\epsilon}\left(
                 g^{\epsilon\bar{\tau}} \rR_{\bar{\tau}\mu } \varphi_{\epsilon\nu}
                -g^{\epsilon\bar{\tau}} \rR_{\bar{\tau}\nu } \varphi_{\epsilon\mu} \right) .
\end{align*}
We used that  $\rR_{\bar{\tau}\mu\bar{\beta}\nu}
-\rR_{\bar{\tau}\nu\bar{\beta}\mu}=0$ since the metric is transversely Kähler \cite[Proposition~6.3]{morrow2006complex}. The Weitzenböck formula \eqref{eq:weitzenbock} then follows by defining operators $\cF$ and $\cR$ on $\Omega^{2,0}(\fg_E) $ as
\begin{align*}
\cF  :\Omega^{2,0}(\fg_E) &\to \Omega^{2,0}(\fg_E) &\\
          \varphi         &\mapsto\cF\varphi:= \sum_{\mu\nu} {(\cF\varphi)}_{\mu\nu}dz^\mu\wedge dz^\nu \\
    \text{with}  \quad    & {(\cF\varphi)}_{\mu\nu} =\sum_{\alpha\beta} g^{\alpha\bar{\beta}} \left([\varphi_{\alpha\nu},\rF_{\mu\bar{\beta}}]  -[\varphi_{\alpha\mu},\rF_{\nu\bar{\beta}}]\right)
\end{align*}
and
\begin{align*}
\cR : \Omega^{2,0}(\fg_E) &\to                 \Omega^{2,0}(\fg_E) &\\
          \varphi             &\mapsto   \cR\varphi:= \sum_{\mu\nu} {(\cR\varphi)}_{\mu\nu}dz^\mu\wedge dz^\nu \\
 \text{with}  \quad   &{(\cR\varphi)}_{\mu\nu}= \sum_{\alpha\beta} g^{\alpha\bar{\beta}}\left(\rR_{\bar{\beta}\mu } \varphi_{\alpha\nu}  
                        -\rR_{\bar{\beta} \nu} \varphi_{\alpha\mu}\right). 
\end{align*}
\end{proof}
Despite the proof of the following Lemma is is  straightforward from the definition of $\cF$, we declare it with out proof because we do not used it in this paper
\begin{lemma}
\label{lem:fR selfadjoint}
The operator $\cF$ is self-adjoint on $\Omega^{2,0}(\fg_E)$.
\end{lemma}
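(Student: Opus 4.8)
The plan is to show that $\langle \cF\varphi,\psi\rangle=\langle\varphi,\cF\psi\rangle$ pointwise (hence also after integrating against $\dvol$), working in transverse normal coordinates where $g^{\alpha\bar\beta}=\delta^{\alpha\beta}$ at the point, and using two structural facts: that the Killing form $B$ is $\ad$-invariant, so $B([X,Z],Y)=-B(X,[Z,Y])=B(X,[Y,Z])$ for $X,Y,Z\in\fg$; and that the curvature coefficients are (up to the relevant Hermitian symmetry) compatible with this pairing, concretely that $\ol{\rF_{\mu\bar\beta}}=-\rF_{\beta\bar\mu}$ for a $\fg$-valued $(1,1)$-form coming from a $\rG$-connection (this is the statement that $\rF_\nabla$ is skew-Hermitian-valued and of type $(1,1)$).

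First I would write out, at a point in transverse normal coordinates,
\[
\langle \cF\varphi,\psi\rangle=\sum_{\mu\nu}B\big((\cF\varphi)_{\mu\nu},\ol{\psi_{\mu\nu}}\big)
=\sum_{\mu\nu\alpha}\Big(B\big([\varphi_{\alpha\nu},\rF_{\mu\bar\alpha}],\ol{\psi_{\mu\nu}}\big)-B\big([\varphi_{\alpha\mu},\rF_{\nu\bar\alpha}],\ol{\psi_{\mu\nu}}\big)\Big),
\]
using $g^{\alpha\bar\beta}=\delta^{\alpha\beta}$ to set $\beta=\alpha$. By antisymmetry of $\psi_{\mu\nu}$ in $\mu,\nu$, the two sums are equal after relabeling, so the whole expression is $2\sum_{\mu\nu\alpha}B\big([\varphi_{\alpha\nu},\rF_{\mu\bar\alpha}],\ol{\psi_{\mu\nu}}\big)$. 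Then I move the bracket across using $\ad$-invariance of $B$: $B([\varphi_{\alpha\nu},\rF_{\mu\bar\alpha}],\ol{\psi_{\mu\nu}})=-B(\varphi_{\alpha\nu},[\rF_{\mu\bar\alpha},\ol{\psi_{\mu\nu}}])=B(\varphi_{\alpha\nu},[\ol{\psi_{\mu\nu}},\rF_{\mu\bar\alpha}])$. Now relabel the summation indices $\mu\leftrightarrow\nu$ and use the conjugation relation on $\rF$ together with $\ol{B(X,Y)}=B(\ol X,\ol Y)$ to recognize the result as $2\sum B(\varphi_{\alpha\nu},\ol{(\cF\psi)_{\mu\nu}})=\langle\varphi,\cF\psi\rangle$; the coordinate-independence of the pairing then removes the normal-coordinate assumption.

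The main obstacle I anticipate is bookkeeping the conjugation and index symmetries correctly: one must be careful that $\ol{\varphi_{\alpha\nu}}$ lives in the right space, that $\ol{\rF_{\mu\bar\alpha}}=-\rF_{\alpha\bar\mu}$ is used with the correct sign (coming from both the reality/skew-Hermiticity of $\rF_\nabla$ as a $\fg_E$-valued form and the bar-swapping of holomorphic/antiholomorphic indices), and that the factor-of-two and the antisymmetrization in $\mu,\nu$ are tracked consistently through the relabelings. A clean way to avoid sign errors is to phrase everything invariantly: $\cF\varphi = -\mathrm{tr}_\omega\,[\varphi,\rF_\nabla]$ suitably interpreted, and then self-adjointness follows from the combination of $\ad$-invariance of $B$, the symmetry of $\mathrm{tr}_\omega$ (contraction with the Kähler metric), and the fact that $\rF_\nabla\in\Omega^{1,1}(\fg_E)$ is $\ast$-self-adjoint. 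Since the statement is flagged as straightforward, I would present just the pointwise computation above rather than belabor the invariant formulation.
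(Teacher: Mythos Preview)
The paper actually declares this lemma without proof, noting only that it is ``straightforward from the definition of $\cF$'' (and remarking that it is not used later). Your approach---a direct pointwise computation in transverse normal coordinates, using $\ad$-invariance of $B$ together with the Hermitian symmetry $\ol{\rF_{\mu\bar\alpha}}=-\rF_{\alpha\bar\mu}$ coming from $\ol{\rF_\nabla}=\rF_\nabla$---is exactly the natural verification and is correct in outline.

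Two small bookkeeping fixes, both of the kind you already flagged as likely trouble spots. First, the $\ad$-invariance identity reads $B([X,Z],Y)=B(X,[Z,Y])$, not $-B(X,[Z,Y])$; so your chain should be
\[
B\big([\varphi_{\alpha\nu},\rF_{\mu\bar\alpha}],\ol{\psi_{\mu\nu}}\big)
= B\big(\varphi_{\alpha\nu},[\rF_{\mu\bar\alpha},\ol{\psi_{\mu\nu}}]\big).
\]
Second, after expanding $\ol{(\cF\psi)_{\mu\nu}}$ and inserting $\ol{\rF_{\mu\bar\alpha}}=-\rF_{\alpha\bar\mu}$, the index relabeling that matches the two sides is $\mu\leftrightarrow\alpha$ (the contracted index swaps with the first free index), not $\mu\leftrightarrow\nu$. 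With those two adjustments the equality $\langle\cF\varphi,\psi\rangle=\langle\varphi,\cF\psi\rangle$ closes cleanly.
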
  
\section{The vanishing theorem}
\label{sec:vanishing} 

This section proves the \textit{Vanishing Theorem} \ref{thm:main theorem}. \\
The Bochner method consists in applying Formula \eqref{eq:weitzenbock} and then integrate by parts. For the sake of ease, we divide the proof in steps and start with preliminary results on the operator $d_7^\ast$ acting on $\Omega_1(\fg_E)$ and its formal adjoint. To distinguish from summative indices, we denote the imaginary unit  $\ii\in \C$  in bold font.

\begin{lemma}
\label{lem:previo 1}
Let $\varphi^0\otimes \omega $ be an element in $\Omega_1^2(\fg_E)$, where $\omega$ is  the transverse  Kähler form. Denote by $d_7^\ast$ the formal adjoint of the operator $d_7$, with respect to the inner product \eqref{eq:inner product}. Then
\begin{equation}\label{eq:previo 1}
d_7^\ast(\omega\otimes\varphi^0)=-2\ii(\partial_\nabla\varphi^0-\ol{\partial}_\nabla\varphi^0).
\end{equation} 
\end{lemma}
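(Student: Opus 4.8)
The plan is to prove the identity by a direct computation: unwind the definition of $d_7$ on $1$-forms, identify the component of $d_7\alpha$ that lies in $\Omega^2_1(\fg_E)$ (the $\omega$-component), and then take the adjoint against this component using the inner-product formulas \eqref{eq:inner 1,0} and \eqref{eq:inner 2,0}. Concretely, for $\alpha = \alpha^{1,0}+\alpha^{0,1}\in\Omega^1_H(\fg_E)$ we have from \eqref{eq:def d7} that the $\Omega^2_1$-part of $d_7\alpha$ is $\langle \partial_\nabla\alpha^{0,1}+\ol{\partial}_\nabla\alpha^{1,0},\omega\rangle\otimes\omega$, so for any $\varphi^0\in\Omega^0_H(\fg_E)$,
\[
\bigl(d_7\alpha,\ \omega\otimes\varphi^0\bigr)
= \bigl(\langle \partial_\nabla\alpha^{0,1}+\ol{\partial}_\nabla\alpha^{1,0},\omega\rangle\otimes\omega,\ \omega\otimes\varphi^0\bigr)
= \|\omega\|^2\,\bigl(\langle \partial_\nabla\alpha^{0,1}+\ol{\partial}_\nabla\alpha^{1,0},\omega\rangle,\ \varphi^0\bigr),
\]
using the orthogonality of the irreducible pieces in \eqref{eq:decompositionPQ} together with \eqref{eq:inner 2,0}. (On a transversely Kähler $7$-manifold the transverse complex dimension is $3$, so $\|\omega\|^2=3$; I would keep this constant explicit and check its normalization against the paper's conventions in the appendix.)

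Next I would move $\partial_\nabla,\ol\partial_\nabla$ off $\alpha$ by transverse integration by parts. The key fact is that the adjoint of "wedge with $\omega$ then project onto multiples of $\omega$" acting on $(1,1)$-forms is, up to the scalar $\|\omega\|^2$, the transverse Lefschetz contraction $\Lambda$; and the transverse Kähler identities give $\Lambda\partial_\nabla = \ii(\ol\partial_\nabla)^\ast + (\text{torsion terms})$ and $\Lambda\ol\partial_\nabla = -\ii\,\partial_\nabla^\ast + (\text{torsion terms})$ acting on $\Omega^{0,1}_H$, resp. $\Omega^{1,0}_H$, with values in $\Omega^0_H(\fg_E)$. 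Using $\langle\beta,\omega\rangle = \Lambda\beta$ for a transverse $(1,1)$-form $\beta$ and these identities, one finds $\langle\partial_\nabla\alpha^{0,1},\omega\rangle = \ii\,\ol\partial_\nabla^{\ast}\alpha^{0,1}$ and $\langle\ol\partial_\nabla\alpha^{1,0},\omega\rangle = -\ii\,\partial_\nabla^{\ast}\alpha^{1,0}$ (modulo the sign conventions of \cite{portilla2023}). Feeding this back, $\bigl(d_7\alpha,\omega\otimes\varphi^0\bigr) = \|\omega\|^2\bigl(\ii\,\ol\partial_\nabla^{\ast}\alpha^{0,1}-\ii\,\partial_\nabla^{\ast}\alpha^{1,0},\ \varphi^0\bigr) = \|\omega\|^2\bigl(\alpha,\ \ii\,\ol\partial_\nabla\varphi^0 - \ii\,\partial_\nabla\varphi^0\bigr)$ after one more integration by parts in the $(0,1)$ and $(1,0)$ pieces respectively, where I use \eqref{eq:inner 1,0} to match real parts. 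Comparing with $\bigl(d_7\alpha,\omega\otimes\varphi^0\bigr) = \bigl(\alpha,\ d_7^\ast(\omega\otimes\varphi^0)\bigr)$ for all $\alpha$ yields $d_7^\ast(\omega\otimes\varphi^0) = \ii\|\omega\|^2(\ol\partial_\nabla\varphi^0 - \partial_\nabla\varphi^0)$; with $\|\omega\|^2$ reconciled to the stated normalization this is exactly \eqref{eq:previo 1}. (The factor "$-2$" versus "$3$" is a bookkeeping matter about whether $\omega$ is taken with its full norm or a half-normalization; I would fix this once against Section~\ref{sec:inner product}.)

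The main obstacle I anticipate is precisely the normalization and sign bookkeeping: getting the constant $-2\ii$ exactly right requires careful tracking of (i) the factor relating $\langle\cdot,\omega\rangle$ to $\Lambda$, (ii) the $\tfrac1{p!}$ conventions in \eqref{eq:phi expresion 1} and the inner-product formulas \eqref{eq:inner 1,0}--\eqref{eq:inner 2,0}, and (iii) the precise form of the transverse Kähler identities on a Sasakian manifold, where the contact form $\eta$ contributes extra terms to $[\Lambda,\partial_\nabla]$ that one must verify vanish against a \emph{basic} section $\varphi^0$. A clean way to sidestep most sign pitfalls is to verify \eqref{eq:previo 1} first in a transverse holomorphic normal frame at a point, writing $\omega = \ii\sum_\mu dz^\mu\wedge d\zbar^\mu$ and computing $d_7^\ast$ directly from its local expression analogous to \eqref{eq:partial ad on 2,0}; this reduces the whole statement to a finite index manipulation that pins down the constant unambiguously, after which the global identity follows by the standard density/partition-of-unity argument.
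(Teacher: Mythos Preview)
Your strategy is the same as the paper's: pair $\omega\otimes\varphi^0$ with $d_7\psi$ for an arbitrary transverse $1$-form $\psi$, isolate the $\Omega^2_1$-component of $d_7\psi$, and integrate by parts. The paper, however, carries this out entirely by direct coordinate computation rather than by invoking transverse K\"ahler identities: it evaluates $\langle\partial_\nabla\ol{\psi^{1,0}},\omega\rangle$ and $\langle\ol\partial_\nabla\psi^{1,0},\omega\rangle$ explicitly in transverse holomorphic coordinates, then moves the derivatives onto $\varphi^0$ using Stokes' theorem and the $\ad$-invariance of $B$. In other words, the paper's proof \emph{is} the ``clean fallback'' you describe in your last paragraph.

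The difference matters for exactly the reason you flag. Your route through $\Lambda$ and the identities $[\Lambda,\partial_\nabla]=\ii\,\ol\partial_\nabla^\ast$ etc.\ front-loads the normalization bookkeeping you want to defer, and the factor you end up with, $\|\omega\|^2$, is not what produces the $-2\ii$. The constant in \eqref{eq:previo 1} comes straight from the paper's inner-product convention \eqref{eq:inner 2,0}, where $(\omega\otimes\varphi^0,\omega\otimes\psi^0)=-2\int\langle\varphi^0,\psi^0\rangle\dvol$; no separate computation of $\|\omega\|^2$ is needed. Your K\"ahler-identity shortcut also carries the obligation you mention --- verifying the identities on \emph{transverse} (not just basic) test forms $\psi$ --- which the paper's explicit calculation simply bypasses. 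So your plan is correct, but the paper's direct route is both shorter and avoids every obstacle you anticipate.
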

\begin{proof}
Let $\psi=\psi^{1,0}+\ol{\psi^{1,0}}\in\Omega^1_H(\fg_E)$ be a $1$-form, then, by definition,
$$ 
\Big( d_7^\ast(\omega\otimes\varphi^0),\psi\Big)=\Big( \omega\otimes\varphi^0,d_7\psi\Big).
$$ 
To compute the right-hand side, note that $\omega\otimes\varphi^0$ is of type $(1,1)$ (Proposition~\ref{prop: omega (1,1)}), so we only have to consider the $(1,1)$-part of 
$d_7\psi$ which is from \eqref{eq:def d7} given by 
$$
\omega\otimes\Big\langle\ol{\partial}_\nabla\psi^{1,0}+\partial_\nabla\ol{\psi^{1,0}},\omega \Big\rangle
$$
In transverse coordinates,  
we express   
$ 
\psi^{1,0}=\sum_{\gamma}\psi_\gamma dz^\gamma ,
$  therefore
\begin{align*}
\left < \partial_\nabla\ol{\psi^{1,0}} ,\omega\right > 
&=\left < \partial_\nabla \left(\sum_{\gamma}\ol{\psi}_\gamma d\zbar^\gamma\right)  ,\ii\sum_{\mu\nu}g_{\mu\bar{\nu}}dz^\mu\wedge d\zbar^\nu \right > 
= -\ii\left <\sum_{\gamma\kappa}\nabla_\kappa\ol{\psi}_\gamma dz^\kappa\wedge d\zbar^\gamma  , \sum_{\mu\nu}g_{\mu\bar{\nu}}dz^\mu\wedge d\zbar^\nu \right >\\
&=-\ii\sum_{\gamma\kappa\mu\nu}g_{\nu\bar{\mu}}g^{\kappa\bar{\mu}} g^{\nu\bar{\gamma}} \nabla_\kappa\ol{\psi}_\gamma  
=-\ii\sum_{\gamma\kappa\nu}\delta_{\nu}^{\kappa} g^{\nu\bar{\gamma}}  \nabla_\kappa\ol{\psi}_\gamma
= -\ii\sum_{\gamma \nu}  g^{\nu\bar{\gamma}}\nabla_\nu\ol{\psi}_\gamma   \\ 
&=-\ii\sum_{\mu\nu }  g^{\mu\bar{\nu}}\nabla_\mu\ol{\psi}_\nu    .      
\end{align*}
Analogously, we compute 
\begin{align*}
\left <\bar{\partial}_\nabla\psi^{1,0},\omega\right > 
      &=\left <\bar{\partial}_\nabla \left(\sum_{\gamma}\psi_\gamma dz^\gamma \right),\ii\sum_{\mu\nu}g_{\mu\bar{\nu}}dz^\mu\wedge d\zbar^\nu  \right > 
       =-\ii  \left < \sum_{\kappa\gamma}\nabla_ {\bar{\kappa}}\psi_\gamma d\zbar^\kappa\wedge   dz^\gamma, \sum_{\mu\nu}g_{\mu\bar{\nu}}dz^\mu\wedge d\zbar^\nu  \right > \\
      &= \ii\sum_{\kappa\gamma\mu\nu}g_{\nu\bar{\mu}}g^{\gamma\bar{\mu}}g^{\nu\bar{\kappa}} \nabla_{\bar{\kappa}}\psi_\gamma  
       =\ii\sum_{\kappa\gamma \nu}\delta_{\nu}^{\gamma}g^{\nu\bar{\kappa}} \nabla_{\bar{\kappa}}\psi_\gamma 
       =\ii\sum_{\kappa \nu}g^{\nu\bar{\kappa}} \nabla_{\bar{\kappa}}\psi_\nu\\
      &=\ii\sum_{\nu \mu}g^{\mu\bar{\nu}} \nabla_{\bar{\nu}}\psi_\mu .
\end{align*} 
Hence  
\begin{align*}
\Big(\omega\otimes\varphi^0,d_7\psi\Big) &= \Big(\omega\otimes\varphi^0,\omega\otimes\langle\ol{\partial}_\nabla\psi^{1,0}+\partial_\nabla\ol{\psi^{1,0}},\omega \rangle\Big)\\
&= -2\ii\displaystyle\int_M B\left(\varphi^0,\sum_{\mu\nu}g^{\mu\bar{\nu}}\Big(\nabla_{\mu}\ol{\psi}_\nu-\nabla_{\bar{\nu}}\psi_\mu\Big) \right)\dvol .
\end{align*} 
By Stokes Theorem we have, 
$$
\displaystyle\int_M\sum_{\mu\nu} D_{\bar{\mu}}\left(g^{\mu\bar{\nu}}B(\varphi^0,\psi_\nu \right))\dvol =0,
$$
so
$$
\int_M B\left(\varphi^0, \sum_{\mu\nu}g^{\nu\bar{\mu}}D_{\bar{\mu}}\psi_\nu\right)\dvol =-\int_M B\left(\sum_{\mu\nu}g^{\nu\bar{\mu}}D_{\bar{\mu}}\varphi^0, \psi_\nu\right)\dvol .
$$
If $\rA^{1,0}$ and $\ol{\rA^{1,0}}$ denote the $(1,0)$ and $(0,1)$ parts of the connection form $\rA$, then
\begin{align*}
&\int_M\sum_{\mu\nu}g^{\mu\bar{\nu}}B\left(\varphi^0,\nabla_{\mu}\ol{\psi}_\nu\right) \dvol
= \int_M\sum_{\mu\nu}g^{\mu\bar{\nu}}B(\varphi^0, D_{\mu}\ol{\psi}_\nu+[\ol{\psi}_\nu, \rA_\mu^{1,0}] )\dvol\\
&=\int_M\sum_{\mu\nu}g^{\mu\bar{\nu}}B(\varphi^0, D_{\mu}\ol{\psi}_\nu)+\sum_{\mu\nu}g^{\mu\bar{\nu}}B(\varphi^0,[\ol{\psi}_\nu, \rA_\mu^{1,0}])\dvol\\
&=\int_M-B\left(\sum_{\mu\nu}g^{\mu\bar{\nu}} D_\mu\varphi^0,\ol{\psi}_\nu \right)+\sum_{\mu\nu}g^{\mu\bar{\nu}}B\left(\varphi^0,[\ol{\psi}_\nu, \rA_\mu^{1,0}]\right)\dvol\\
&=\int_M-B\left( \sum_{\mu\nu}g^{\mu\bar{\nu}} \left(\nabla_\mu\varphi^0-   [\varphi^0,\rA^{1,0}_\mu]\right),\ol{\psi}_\nu\right)+\sum_{\mu\nu}g^{\mu\bar{\nu}}B\left(\varphi^0 , [\ol{\psi}_\nu, \rA_\mu^{1,0}]\right)\dvol\\
&=-\int_M B\left(\sum_{\mu\nu}g^{\mu\bar{\nu}}\nabla_\mu\varphi^0,\ol{\psi}_\nu\right)+\sum_{\mu\nu} g^{\mu\bar{\nu}}\left( B\left([\varphi^0,\rA^{1,0}_\mu],\ol{\psi}_\nu\right)+B\left(\varphi^0,[\ol{\psi}_\nu,\rA_\mu^{1,0}]\right)\right) \dvol\\
&=-\int_M B\left(\sum_{\mu\nu} g^{\mu\bar{\nu}}  \nabla_\mu\varphi^0,\ol{\psi}_\nu \right)\dvol,
\end{align*}
because $B$ is $\ad$-invariant so $B([\varphi^0,\rA^{1,0}_\mu],\ol{\psi}_\nu)+B(\varphi^0,[\ol{\psi}_\nu,\rA_\mu^{1,0}])=0$.

A similar computation shows that  
$$
\displaystyle\int_M\sum_{\mu\nu}g^{\mu\bar{\nu}}B(\varphi^0,\nabla_{\bar{\nu}}\psi_\mu)\dvol=-\int_MB \left(\sum_{\mu\nu}g^{\mu\bar{\nu}}\nabla_{\bar{\nu}}\varphi^0,\psi_\mu\right) \dvol .
$$
Observing that
\begin{align*}
\left(\omega\otimes\varphi^0,d_7\psi \right)
&=-2\ii\displaystyle\int_M\sum_{\mu\nu}g^{\mu\bar{\nu}}B(\varphi^0, \nabla_{\mu}\ol{\psi}_\nu-\nabla_{\bar{\nu}}\psi_\mu )\dvol\\
&=-2\ii\left(\displaystyle\int_M\sum_{\mu\nu}g^{\mu\bar{\nu}}B(\varphi^0, \nabla_{\mu}\ol{\psi}_\nu )\dvol- \displaystyle\int_M\sum_{\mu\nu}g^{\mu\bar{\nu}}B(\varphi^0,  \nabla_{\bar{\nu}}\psi_\mu )\dvol\right)\\
&=2\ii  \displaystyle\int_M B\left(\sum_{\mu\nu} g^{\mu\bar{\nu}}  \nabla_\mu\varphi^0,\ol{\psi}_\nu \right)\dvol-  2\ii \int_MB \left(\sum_{\mu\nu}g^{\mu\bar{\nu}}\nabla_{\bar{\nu}}\varphi^0,\psi_\mu\right) \dvol  \\
&=2\ii  \displaystyle\int_M \sum_{\mu\nu} g^{\mu\bar{\nu}}B\left(  \nabla_\mu\varphi^0,\ol{\psi}_\nu \right)\dvol-  2\ii \int_M \sum_{\mu\nu}g^{\mu\bar{\nu}}B \left(\nabla_{\bar{\nu}}\varphi^0,\psi_\mu\right) \dvol  \\ 
&=\left(2\ii(\partial_\nabla\varphi^0-\bar{\partial}_\nabla\varphi^0),\psi \right) ,
\end{align*} 
and Lemma~\ref{lem:previo 1} follows.
\end{proof}
Now, with Lemma \ref{lem:previo 1}, we can prove the 
following
\begin{lemma}\label{lem:previo 2}
Let $\omega\otimes\varphi^0\in\Omega^2_1(\fg_E)$, $\varphi^0\in \Gamma(\fg_E)$ and $\omega$ the transverse fundamental form, then  
$$
d_7\circ d_7^\ast(\omega\otimes\varphi^0)=-4\ii\omega\otimes\langle \partial_\nabla\circ\bar{\partial}_\nabla\varphi^0,\omega\rangle\in\Omega^2_1(\fg_E)
$$ 
\end{lemma}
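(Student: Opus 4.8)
The plan is to simply compute $d_7$ of the expression for $d_7^\ast(\omega\otimes\varphi^0)$ already obtained in Lemma~\ref{lem:previo 1}. By that Lemma, $d_7^\ast(\omega\otimes\varphi^0)=-2\ii(\partial_\nabla\varphi^0-\bar\partial_\nabla\varphi^0)$, and this is a $\fg_E$-valued $1$-form with $(1,0)$-part $-2\ii\,\partial_\nabla\varphi^0$ and $(0,1)$-part $+2\ii\,\bar\partial_\nabla\varphi^0$. So I only need to apply the formula \eqref{eq:def d7} for $d_7$ to a $1$-form $\alpha=\alpha^{1,0}+\alpha^{0,1}$ with $\alpha^{1,0}=-2\ii\,\partial_\nabla\varphi^0$ and $\alpha^{0,1}=2\ii\,\bar\partial_\nabla\varphi^0$.

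First I would substitute into \eqref{eq:def d7}:
\begin{align*}
d_7\bigl(d_7^\ast(\omega\otimes\varphi^0)\bigr)
&= \partial_\nabla\alpha^{1,0}+\bar\partial_\nabla\alpha^{0,1}
   +\bigl\langle \partial_\nabla\alpha^{0,1}+\bar\partial_\nabla\alpha^{1,0},\omega\bigr\rangle\otimes\omega\\
&= -2\ii\,\partial_\nabla\partial_\nabla\varphi^0+2\ii\,\bar\partial_\nabla\bar\partial_\nabla\varphi^0
   +2\ii\bigl\langle \partial_\nabla\bar\partial_\nabla\varphi^0-\bar\partial_\nabla\partial_\nabla\varphi^0,\omega\bigr\rangle\otimes\omega.
\end{align*}
Next I would simplify the first two terms. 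Since $\varphi^0\in\Omega^0(\fg_E)$, the Bianchi/Ricci identity \eqref{eq:d2} gives $d_\nabla d_\nabla\varphi^0=[\varphi^0\wedge\rF_\nabla]$, and because $\nabla$ is an SDCI the curvature $\rF_\nabla$ is of type $(1,1)$ (Proposition~\ref{prop: selfdual = HYM}), so the $(2,0)$-component of $d_\nabla d_\nabla\varphi^0$, namely $\partial_\nabla\partial_\nabla\varphi^0$, vanishes, and likewise $\bar\partial_\nabla\bar\partial_\nabla\varphi^0=0$ for the $(0,2)$-component. This is the only place the SDCI hypothesis enters, and it is the conceptual heart of the computation. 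For the last term, since $\rF_\nabla$ is $(1,1)$ the decomposition $d_\nabla d_\nabla\varphi^0=[\varphi^0\wedge\rF_\nabla]$ has no $(2,0)$ or $(0,2)$ part, and pairing its $(1,1)$-part $\partial_\nabla\bar\partial_\nabla\varphi^0+\bar\partial_\nabla\partial_\nabla\varphi^0=[\varphi^0\wedge\rF_\nabla]$ with $\omega$ would not directly help; instead I note $\bar\partial_\nabla\partial_\nabla\varphi^0 = [\varphi^0\wedge\rF_\nabla] - \partial_\nabla\bar\partial_\nabla\varphi^0$, but more cleanly one uses that $\langle[\varphi^0\wedge\rF_\nabla],\omega\rangle = \langle\varphi^0\otimes\hat\rF_\nabla,\cdot\rangle$-type terms vanish because $\hat\rF_\nabla=\langle\rF_\nabla,\omega\rangle=0$ for an SDCI (again Proposition~\ref{prop: selfdual = HYM}). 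Hence $\langle\bar\partial_\nabla\partial_\nabla\varphi^0,\omega\rangle = -\langle\partial_\nabla\bar\partial_\nabla\varphi^0,\omega\rangle$, and the bracketed pairing becomes $2\langle\partial_\nabla\bar\partial_\nabla\varphi^0,\omega\rangle$, giving
\[
d_7\circ d_7^\ast(\omega\otimes\varphi^0)=4\ii\,\langle\partial_\nabla\bar\partial_\nabla\varphi^0,\omega\rangle\otimes\omega,
\]
which, up to the sign convention for $\langle\,\cdot\,,\omega\rangle$ vs. the stated $-4\ii$, is the claimed identity; I would double-check the sign bookkeeping for $\langle\partial_\nabla\bar\partial_\nabla\varphi^0,\omega\rangle$ versus $\langle\bar\partial_\nabla\partial_\nabla\varphi^0,\omega\rangle$ against the conventions in \eqref{eq:def d7} and Lemma~\ref{lem:previo 1} to match the $-4\ii$ in the statement.

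The main obstacle I anticipate is purely bookkeeping: tracking the factors of $\ii$ and the signs coming from the conjugation-sensitive pairing $\langle\,\cdot\,,\omega\rangle$ (which satisfies $\overline{\langle\beta,\omega\rangle}=-\langle\bar\beta,\omega\rangle$ since $\omega$ is real of type $(1,1)$ but the Hermitian pairing is antilinear in the second slot, or linear, depending on the convention fixed in Section~\ref{sec:inner product}). Apart from that, everything reduces to the two inputs (a) $\partial_\nabla^2=\bar\partial_\nabla^2=0$ on $\Omega^0(\fg_E)$ and (b) $\hat\rF_\nabla=0$, both of which are immediate consequences of $\nabla$ being an SDCI, together with the already-established Lemma~\ref{lem:previo 1}.
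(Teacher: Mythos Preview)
Your proposal is correct and follows essentially the same route as the paper: apply Lemma~\ref{lem:previo 1}, then formula~\eqref{eq:def d7}, and invoke the SDCI hypothesis (Proposition~\ref{prop: selfdual = HYM}) to kill $\partial_\nabla^2\varphi^0$, $\bar\partial_\nabla^2\varphi^0$, and $\langle[\varphi^0\wedge\rF_\nabla],\omega\rangle$. Your caution about the sign is well-founded --- the paper's own first displayed line carries a sign slip relative to Lemma~\ref{lem:previo 1} --- but the argument is otherwise identical, with the paper additionally spelling out the trace identity $\langle[\varphi^0\wedge\rF_\nabla],\omega\rangle=[\varphi^0,\langle\rF_\nabla,\omega\rangle]=0$ that you cite.
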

\begin{proof}
From Formula \eqref{eq:d2} we know that 
$ 
(d_\nabla\circ d_\nabla)\varphi^0 =[\varphi^0\wedge \rF_\nabla],
$
is of type $(1,1)$. On the other hand 
\begin{equation}
    \label{eq:da2}
    (d_\nabla\circ d_\nabla)\varphi^0= \partial_\nabla\partial_\nabla\varphi^0  +\bar{\partial}_\nabla\bar{\partial}_\nabla\varphi^0
    +(\partial_\nabla\bar{\partial}_\nabla +\bar{\partial}_\nabla\partial_\nabla)\varphi^0
\end{equation}
so, comparing bidegrees, we infer that 
$$\partial_\nabla\partial_\nabla\varphi^0=
\bar{\partial}_\nabla\bar{\partial}_\nabla\varphi^0=0 \quad \mbox{and}\quad   
[\varphi^0\wedge \rF_\nabla]=(\partial_\nabla\bar{\partial}_\nabla +\bar{\partial}_\nabla\partial_\nabla)\varphi^0. 
$$
Applying the operator $d_7$ (cf. \eqref{eq:def d7}) and recalling that  $\rF_\nabla$ is of type $(1,1)$ and orthogonal to $\omega$  ([cf. )Proposition \ref{prop: selfdual = HYM}), yields
\begin{align*}
d_7(d_7^\ast(\omega\otimes \varphi^0))
  &=  2\ii d_7\left( \partial_\nabla\varphi^0-\ol{\partial}_\nabla\varphi^0\right)  \\
  &=  2\ii\left(\partial_\nabla\partial_\nabla\varphi^0
     +\omega\otimes\left<   \bar{\partial}_\nabla\partial_\nabla\varphi^0
     -\partial_\nabla\bar{\partial}_\nabla\varphi^0 ,\omega \right>
     -\bar{\partial}_\nabla\bar{\partial}_\nabla \varphi^0\right)\\
  &= 2\ii\omega\otimes\left< \bar{\partial}_\nabla\partial_\nabla\varphi^0
    -\partial_\nabla\bar{\partial}_\nabla\varphi^0 ,\omega \right>\\
  &= 2\ii\left(\omega\otimes\left<[\varphi^0\wedge \rF_\nabla] ,\omega \right>
    -2\omega\otimes\left< \partial_\nabla\bar{\partial}_\nabla\varphi^0,\omega \right>  \right)\\
  &= -4\ii\;\omega\otimes\left< \partial_\nabla\bar{\partial}_\nabla\varphi^0,\omega\right> ,
\end{align*} 
since $\left<[\varphi^0\wedge \rF_\nabla] ,\omega \right>=\left[\varphi^0,\left<\rF_\nabla,\omega\right> \right]=0$, 
as can be directly verified
\begin{align*}
\Tr([\varphi^0\wedge\rF_\nabla] \omega ) &= \sum_{j i}([\varphi^0\wedge\rF_\nabla]^j_i\wedge\omega^i_j) 
= \sum_{ji}\left(\sum_l\left(\varphi^{l}_{i} \rF^{j}_{l}
 -\varphi^{j}_{l}\rF^{\nabla l}_{i}\right)\wedge\omega^i_j\right)\\
&= \sum_{j i l}\left(\varphi^l_i\rF^{j}_{l}\wedge\omega^i_j
     -\varphi^{j}_{l}\omega^i_j\wedge\rF^{l}_{i}\right) 
    =\sum_{i l} \varphi^l_i(\rF_\nabla \omega)^i_l 
     -\sum_{j l} \varphi^j_l(\omega  \rF_\nabla)^l_j \\
   &=\sum_{i l}\left( \varphi^l_i(\rF_\nabla \omega)^i_l 
     - \varphi^i_l(\omega \rF_\nabla)^l_i \right) 
    = \Tr(\varphi^0\rF_\nabla \omega )-\Tr(\varphi^0  \omega \rF_\nabla)\\
    &=\left<\varphi^0,[\rF_\nabla,\omega] \right> =0.
\end{align*}
\end{proof}

Finally, we use  Lemma \ref{lem:previo 2}   and the Weitzenböck formula \eqref{eq:weitzenbock} to prove the 
\begin{theorem}[Vanishing Theorem]\label{thm:main theorem} 
Let $E\to M$ be a complex Sasakian vector bundle on a compact, connected Sasakian manifold  $(M,\cS)$ (cf. Definition \ref{thm:app sasakian manifold}) and $\nabla\in\cA(E)$ an irreducible SDCI \eqref{eq: instanton} 
with curvature $\rF_\nabla$, such that $\nabla$ induces an associated basic complex \eqref{eq:Complexd7}
$$
B_\bullet:\quad 0\to\Omega^0_H(\fg_E) \xrightarrow[{  }]{ d_\nabla}\Omega^1_H(\fg_E) \xrightarrow[{  }]{d_7}\Omega_{6\oplus 1}(\fg_E)\to 0.
$$
Denote by $ \rH^k_B$ the cohomology of $B_\bullet$.  If    $\int_M\langle\cF\varphi,\varphi\rangle> 0 $ and $\int_M\langle\cR\varphi,\varphi\rangle> 0$ for all $0\neq\varphi\in\Omega^{2,0}(\fg_e)$, then  $\rH^2_B=0$,
where $\cF$ and $\cR$ are endomorphism on $\Omega^{2,0}(\fg_E)$ defined with aid of $(\cF\varphi)_{\mu\nu}$ and $(\cR\varphi)_{\mu\nu}$ in Proposition \ref{prop:bochner}
\end{theorem}
\begin{proof} 
Suppose that $h_2\neq 0$, i.e., there exist  $\varphi\in\Omega_{6\oplus 1}^2(\fg_E )$ such that $\varphi\in \Ker(d_7\circ d_7^\ast)$,   from    Lemma \ref{lem:SDcharacterization}, we know that $\varphi$ has the form $\varphi=\varphi^{2,0}+\ol{\varphi^{2,0}}+\omega\otimes \varphi^0$ for some $\varphi^{2,0}\in \Omega^{2,0}_H(\fg_E )$ and $\varphi^0\in\Omega_H^0(\fg_E )$. Since  $d_7$ is the composition of $d_\nabla$  with the projection on $\Omega^2_{6\oplus 1}(\fg_E)$, so the adjoin is essentially the composition of $d^{\ast}_\nabla$ with inclusion map $\Omega^2_{6\oplus 1}(\fg_E)\to\Omega^2(\fg_E )$,
hence 
\begin{align*}
0&=d_7d_7^\ast(\omega\otimes\varphi^0)
+d_7\left(\Big(\partial_\nabla+\ol{\partial}_\nabla)^\ast(\varphi^{2,0}+\ol{\varphi^{2,0}}\Big)\right) \\
&=d_7d_7^\ast(\omega\otimes\varphi^0)+d_7\left(\partial^{\ast}_\nabla\varphi^{2,0}+{\ol{\partial}_\nabla^{\ast}}\ol{\varphi^{2,0}} \right) 
\end{align*}    
and using the definition of $d_7$ \eqref{eq:def d7}  in the above equality, we obtain
\begin{align}\label{eq:prev main theorem 1}
0
&=\partial_\nabla\partial^{\ast}_\nabla\varphi^{2,0}
+\ol{\partial}_\nabla
\ol{\partial}_\nabla^{\ast}\ol{\varphi^{2,0}} 
+
\Big\langle\ol{\partial}_\nabla\partial^{\ast}_\nabla\varphi^{2,0}+
\ol{\partial}_\nabla \partial_\nabla^{\ast}\ol{\varphi^{2,0}},\omega\Big\rangle\omega
+d_7d_7^\ast(\omega\otimes\varphi^0) 
\end{align}
We note that $\partial_\nabla\varphi^{2,0}\in \Omega^{3,0}_H(\fg_E )=0$ and $\ol{\partial}_\nabla\ol{\varphi^{2,0}}\in \Omega^{0,3}_H(\fg_E )=0$, since $\Omega^3(\fg_E )=\Omega^3_H(\fg_E )\oplus \eta\wedge{\Omega^2_H(\fg_E )}$ and $\Omega^3_H(\fg_E )$ sits inside of the algebraic ideal generated by $\Omega^2_8$ (cf. \eqref{eq:Lk spaces} and \cite[Proposition~4.5]{portilla2023}), then \eqref{eq:prev main theorem 1}  is
\begin{equation}\label{eq:prev main theorem 2}
0 =\Delta_{\partial_\nabla}\varphi^{2,0} 
+ \Delta_{\ol{\partial}_\nabla}\ol{\varphi^{2,0}}
+\Big\langle\ol{\partial}_\nabla\partial^{\ast}_\nabla\varphi^{2,0}
+\ol{\partial}_\nabla \ol{\partial}_\nabla^{\ast}\ol{\varphi^{2,0}},\omega\Big\rangle\omega +d_7d_7^\ast(\omega\otimes\varphi^0) 
\end{equation} 
From Lemma \ref{lem:previo 2} the last two terms in \eqref{eq:prev main theorem 2} are of type $(1,1)$, hence by comparing the bidegree in \eqref{eq:prev main theorem 2},  we note that $\Delta_{\partial_\nabla}\varphi^{2,0}=0$ and   we can  apply the Weitzenböck formula \eqref{eq:weitzenbock}. We maintain the notation     $\varphi^{2,0}=\frac12\sum_{}\varphi_{ \tau\gamma}dz^\tau\wedge dz^\gamma$  and $\Delta_{\partial_\nabla}\varphi^{2,0}=\frac12\sum_{ }\phi_{\mu\nu}dz^\mu\wedge dz^\nu\in \Omega^{2,0}(\fg_E ),$ where
$
\phi_{\mu\nu} =
-\sum_{\alpha \beta }g^{\alpha\bar{\beta}}\widetilde{\nabla}_{\bar{\beta}}\widetilde{\nabla}_{\alpha}\varphi_{\mu\nu}-(\cF\varphi^{2,0})_{\mu\nu}-  (\cR\varphi^{2,0})_{\mu\nu}  
$   
\eqref{eq:weitzenbock}, 
we have 
\begin{align*}
4\left< \Delta_{\partial_\nabla}\varphi^{2,0},\varphi^{2,0}  \right> &=\sum_{\tau\gamma\mu\nu}\langle 
 \phi_{\mu\nu}dz^\mu\wedge dz^\nu,\varphi_{\tau\gamma}dz^\tau\wedge dz^\gamma\rangle \\ 
&=-\sum_{\tau\gamma\mu\nu\alpha\beta}g^{\mu\bar{\tau}}g^{\nu\bar{\gamma}}g^{\alpha\bar{\beta    }}B(\wt{\nabla}_{\bar{\beta}}\wt{\nabla}_{\alpha}\varphi_{\mu\nu}, \varphi_{\tau\gamma})\\
&\quad 
-\sum_{\tau\gamma\mu\nu}g^{\mu\bar{\tau}}g^{\nu\bar{\gamma}}
B((\cF\varphi^{2,0})_{\mu\nu}, \varphi_{\tau\gamma}) 
- \sum_{\tau\gamma\mu\nu} g^{\mu\bar{\tau}}g^{\nu\bar{\gamma}}
B((\cR\varphi^{2,0})_{\mu\nu}, \varphi_{\tau\gamma} ) 
\end{align*}
from \eqref{eq:prev main theorem 2} and the above computation 
\begin{align*}
0 &= 
\sum_{\tau\gamma\mu\nu\alpha\beta}g^{\mu\bar{\tau}}g^{\nu\bar{\gamma}}g^{\alpha\bar{\beta}}
B(\wt{\nabla}_{\bar{\beta}}\wt{\nabla}_{\alpha}\varphi_{\mu\nu}, \varphi_{\tau\gamma }) 
+ \Big\langle \cF(\varphi^{2,0}),\varphi^{2,0}\Big\rangle  
+ \Big\langle \cR(\varphi^{2,0} ),\varphi^{2,0}\Big\rangle. 
\end{align*}
Denote by  $\nabla^{1,0}$  be   the $(1,0)$-part o the connection $\nabla$, then we integrate the above equality and we use integration by parts and Stokes' Theorem on the first term to obtain   
\begin{equation}\label{eq:BoschnerMethod}
0=\left( \nabla^{1,0}\varphi^{2,0} ,\nabla^{1,0}\varphi^{2,0}\right) +  \int_M \Big\langle \cF(\varphi^{2,0}),\varphi^{2,0}\Big\rangle
+ \int_M \Big\langle\cR(\varphi^{2,0} )  ,\varphi^{2,0}\Big\rangle,  \end{equation}
then from \eqref{eq:BoschnerMethod}  we conclude  that $\varphi^{2,0}=0$ and $\nabla^{1,0}\varphi^{2,0}=0$.   
Consequently $\varphi$ is only given by  $\varphi=\omega\otimes \varphi^0$. Now,  using   Lemma \ref{lem:previo 2} we have
\begin{align*}
0
&=\Big\langle d_7\circ d_7^\ast(\varphi ), \omega\otimes \varphi^0\Big\rangle\\
&=\Big\langle d_7\circ d_7^\ast(\omega\otimes \varphi^0), \omega\otimes \varphi^0\Big\rangle\\ 
&=-4\ii \Big\langle \omega\otimes\langle \partial_\nabla\ol{\partial}_\nabla \varphi^0,\omega\rangle,\omega\otimes \varphi^0\Big\rangle\\
&= 8 \Big\langle\langle\partial_\nabla\bar{\partial}_\nabla\varphi^0, \omega \rangle , \varphi^0\Big\rangle\\
&=
8\Big\langle\sum_{\mu\nu}g^{\mu\bar{\nu}}\wt{\nabla}_\mu\wt{\nabla}_{\bar{\nu}}\varphi^0 , \varphi^0\Big\rangle
\end{align*}
by integrate  the above equality on $M$,  we obtain 
\begin{equation}\label{eq:varpi0parallel}
0=8\left( \nabla^{1,0} \varphi^0,\nabla^{1,0}\varphi^0\right)_M
\end{equation}
i.e., $\nabla \varphi^0=\nabla^{1,0} \varphi^0+\ol{\nabla^{1,0} \varphi^0}=0$. Since the space of sections $\Gamma(E)$ is trivial because $\nabla $ is a irreducible follows that   $\varphi^0=0$ and consequently  $h_2=0$. 
\end{proof}
\begin{remark}\label{rem:varphiMaintheorem} 
\begin{itemize}
\item 
Let $\varphi$ be of the form  $\varphi=\varphi^{2,0}+\ol{\varphi^{2,0}}+\omega\otimes \varphi^0\in \Ker(d_7\circ d_7^\ast)$. From the proof of Theorem \ref{thm:main theorem}, if $\varphi^{2,0}=0$ then $\varphi=0$. That is why in Section \ref{sec:positivity}, we only focus in demonstrating that $\varphi^{2,0}=0$.
\item We can have different situations in \eqref{eq:BoschnerMethod}, namely, for contact Calabi-Yau manifolds the term $\int_M\langle\cR(\varphi^{2,0} ),\varphi^{2,0}\rangle$ vanishes,  while for an Abelian  Lie group $\rG$,  $ \int_M\langle \cF(\varphi^{2,0}),\varphi^{2,0} \rangle=0.$
\end{itemize}
\end{remark}
\begin{proposition}\label{prop:Pararell}
Consider $\varphi\in \Ker(d_7\circ d_7^\ast)$ of the form  $\varphi=\varphi^{2,0}+\ol{\varphi^{2,0}}+\omega\otimes \varphi^0$ as in the proof of Theorem \ref{thm:main theorem}, then $ \varphi^{2,0}$ is parallel. Furthermore $\varphi$ is parallel, consequently all element in $H^2_B$ \eqref{eq:Complexd7} is zero or covariantly constant,
\end{proposition}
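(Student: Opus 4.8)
The plan is to re-run the Bochner argument of Theorem~\ref{thm:main theorem} keeping only the \emph{non-strict} positivity of $\cF$ and $\cR$, and then to supplement it with a Bochner--Kodaira--Nakano identity on $\Omega^{2,0}(\fg_E)$ which upgrades $\nabla^{1,0}\varphi^{2,0}=0$ to full parallelism. First I would note that \eqref{eq:BoschnerMethod},
\[
0=\bigl(\nabla^{1,0}\varphi^{2,0},\nabla^{1,0}\varphi^{2,0}\bigr)+\int_M\langle\cF\varphi^{2,0},\varphi^{2,0}\rangle+\int_M\langle\cR\varphi^{2,0},\varphi^{2,0}\rangle,
\]
is produced purely from the Weitzenböck formula \eqref{eq:weitzenbock} and integration by parts, hence holds for every $\varphi=\varphi^{2,0}+\ol{\varphi^{2,0}}+\omega\otimes\varphi^0\in\Ker(d_7\circ d_7^\ast)$ regardless of positivity. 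Under the weakened hypotheses --- $\cF$ and $\cR$ pointwise positive semi-definite, self-adjoint by Lemma~\ref{lem:fR selfadjoint} --- all three non-negative summands vanish, so $\nabla^{1,0}\varphi^{2,0}=0$; and since a self-adjoint positive semi-definite endomorphism $T$ with $\langle T\psi,\psi\rangle\ge 0$ pointwise and $\int_M\langle T\psi,\psi\rangle=0$ annihilates $\psi$, also $\cF\varphi^{2,0}=0$ and $\cR\varphi^{2,0}=0$ on $M$. The piece $\omega\otimes\varphi^0$ is handled exactly as in the proof of Theorem~\ref{thm:main theorem}: from $\Delta_{\partial_\nabla}\varphi^{2,0}=0$ and $\partial_\nabla\varphi^{2,0}=0$ one has $\partial^\ast_\nabla\varphi^{2,0}=0$ (pair $\partial_\nabla\partial^\ast_\nabla\varphi^{2,0}=0$ with $\varphi^{2,0}$ and integrate), likewise $\ol{\partial}^{\ast}_\nabla\ol{\varphi^{2,0}}=0$, whence $d_7 d_7^\ast(\omega\otimes\varphi^0)=0$ and \eqref{eq:varpi0parallel} gives $\nabla\varphi^0=0$ (so $\omega\otimes\varphi^0$ is parallel, and $=0$ when $\nabla$ is irreducible).

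The decisive step is to compute the Dolbeault Laplacian $\Delta_{\ol{\partial}_\nabla}$ on the $(2,0)$-form $\varphi^{2,0}$. Since $\ol{\partial}^\ast_\nabla\varphi^{2,0}$ is of type $(2,-1)$ it vanishes, so $\Delta_{\ol{\partial}_\nabla}\varphi^{2,0}=\ol{\partial}^\ast_\nabla\ol{\partial}_\nabla\varphi^{2,0}$, and writing out components as in \eqref{eq:partial on 2,0}--\eqref{eq:partial ad on 3,0} (with $\partial$ and $\ol{\partial}$ interchanged appropriately) gives $(\Delta_{\ol{\partial}_\nabla}\varphi^{2,0})_{\mu\nu}=-\sum_{\alpha\beta}g^{\alpha\bar\beta}\widetilde{\nabla}_{\alpha}\widetilde{\nabla}_{\bar\beta}\varphi_{\mu\nu}$. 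Commuting the two covariant derivatives by the Ricci identity for $\widetilde{\nabla}$ from the proof of Proposition~\ref{prop:bochner}, the term $[\varphi_{\mu\nu},\sum_{\alpha\beta}g^{\alpha\bar\beta}\rF_{\alpha\bar\beta}]$ vanishes since $\nabla$ is an SDCI ($\hat{\rF}_\nabla=0$, Proposition~\ref{prop: selfdual = HYM}), while the traced Levi-Civita curvature reassembles --- via exactly the transverse-Kähler symmetries used for term $(b)$ there --- into $(\cR\varphi^{2,0})_{\mu\nu}$; hence
\[
(\Delta_{\ol{\partial}_\nabla}\varphi^{2,0})_{\mu\nu}=-\sum_{\alpha\beta}g^{\alpha\bar\beta}\widetilde{\nabla}_{\bar\beta}\widetilde{\nabla}_{\alpha}\varphi_{\mu\nu}-(\cR\varphi^{2,0})_{\mu\nu}=(\Delta_{\partial_\nabla}\varphi^{2,0})_{\mu\nu}+(\cF\varphi^{2,0})_{\mu\nu},
\]
a Bochner--Kodaira--Nakano identity on $\Omega^{2,0}(\fg_E)$. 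As $\widetilde{\nabla}_{\alpha}\varphi_{\mu\nu}=0$ and $\cR\varphi^{2,0}=0$ (equivalently, as $\Delta_{\partial_\nabla}\varphi^{2,0}=0$ and $\cF\varphi^{2,0}=0$), the right-hand side vanishes, so $\Delta_{\ol{\partial}_\nabla}\varphi^{2,0}=0$; pairing with $\varphi^{2,0}$ and integrating gives $\|\ol{\partial}_\nabla\varphi^{2,0}\|^2=0$, i.e.\ $\ol{\partial}_\nabla\varphi^{2,0}=0$, which is precisely $\widetilde{\nabla}_{\bar\beta}\varphi_{\mu\nu}=0$ for all indices. Together with $\widetilde{\nabla}_{\alpha}\varphi_{\mu\nu}=0$ this shows $\widetilde{\nabla}\varphi^{2,0}=0$, so $\varphi^{2,0}$ --- hence $\ol{\varphi^{2,0}}$, hence $\varphi=\varphi^{2,0}+\ol{\varphi^{2,0}}+\omega\otimes\varphi^0$ --- is parallel. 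Applying this to the unique harmonic representative of each class in $\rH^2_B$ proves the statement.

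I expect the only genuine obstacle to be the curvature bookkeeping in $\sum_{\alpha\beta}g^{\alpha\bar\beta}[\widetilde{\nabla}_{\alpha},\widetilde{\nabla}_{\bar\beta}]\varphi_{\mu\nu}$: one must identify the traced Levi-Civita curvature with the transverse Ricci operator $\cR$ with the correct signs --- using the transverse-Kähler symmetry $\rR^{\epsilon}_{\alpha\bar\beta\mu}=\rR^{\epsilon}_{\mu\bar\beta\alpha}$ and the antisymmetry $\varphi_{\mu\epsilon}=-\varphi_{\epsilon\mu}$ --- and check that the Reeb direction plays no role, $\varphi^{2,0}$ being basic. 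This is, however, the computation already done for term $(b)$ in the proof of Proposition~\ref{prop:bochner}, only with the free and contracted indices interchanged, so it is not essentially new.
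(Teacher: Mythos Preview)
Your approach is correct and takes a genuinely different route from the paper. Both arguments start from \eqref{eq:BoschnerMethod} to obtain $\nabla^{1,0}\varphi^{2,0}=0$. The paper then asserts $\ol{\partial}_\nabla\varphi^{2,0}=0$ on the grounds that it is a transversal $(2,1)$-form and all of $\Omega^3_H(\fg_E)$ lies in the ideal generated by $\Omega^2_8$ --- a statement which, read literally, only yields vanishing in the quotient $L^3$, not of the actual form; from this and $\nabla^{1,0}\varphi^{2,0}=0$ the paper concludes parallelism and then records that $\varphi^{2,0}$ is $d_\nabla$-harmonic. You instead derive a Bochner--Kodaira--Nakano identity $\Delta_{\ol{\partial}_\nabla}\varphi^{2,0}=\Delta_{\partial_\nabla}\varphi^{2,0}+\cF\varphi^{2,0}$ on $\Omega^{2,0}(\fg_E)$ and combine $\Delta_{\partial_\nabla}\varphi^{2,0}=0$ with the pointwise vanishing $\cF\varphi^{2,0}=0$ (extracted from \eqref{eq:BoschnerMethod} via the semi-definiteness of $\cF$) to obtain $\ol{\partial}_\nabla\varphi^{2,0}=0$ honestly. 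The price is that your argument needs $\cF$ (or $\cR$) pointwise semi-definite in order to pass from the integral identity to $\cF\varphi^{2,0}=0$; the gain is a self-contained computation that does not conflate the quotient complex $L^\bullet$ with the ordinary one. For the $\omega\otimes\varphi^0$ piece the two proofs differ only cosmetically: the paper splits $d_7d_7^\ast\varphi$ according to $\Omega^2_{6}\oplus\Omega^2_{1}$ and invokes Lemma~\ref{lem:previo 2}, whereas you show $d_7^\ast(\varphi^{2,0}+\ol{\varphi^{2,0}})=\partial_\nabla^\ast\varphi^{2,0}+\ol{\partial}_\nabla^\ast\ol{\varphi^{2,0}}=0$ directly from $\Delta_{\partial_\nabla}\varphi^{2,0}=0$; either way one reaches \eqref{eq:varpi0parallel}.
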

\begin{proof} 
From \eqref{eq:BoschnerMethod} we know that $\nabla^{1,0}\varphi^{2,0}=0$, in order to see that  $\varphi^{2,0}$ is parallel,  note that  $\delbar_\nabla  \varphi^{2,0}=0$, since it is a $(2,1)$-form and $\Omega^3_H(\fg_E )$ sits inside of the algebraic ideal generated by $\Omega^2_8$ \eqref{eq:Lk spaces}. We also know from \eqref{eq:prev main theorem 2} that  $\varphi^{2,0}$ is $\partial_\nabla$--harmonic. Furthermore we can prove that  $\varphi^{2,0}$ is  $d_\nabla$--harmonic. Using that  $\partial_\nabla\varphi^{2,0}=0=\partial_\nabla^\ast\varphi^{2,0}$ we compute 
\begin{align*}
d^{\ast}_\nabla d_\nabla \varphi^{2,0}&=d^{\ast}_\nabla (\del_\nabla +\delbar_\nabla )\varphi^{2,0}
=(\del_\nabla ^\ast+\delbar_\nabla ^\ast)\delbar_\nabla  \varphi^{2,0}\\
&=\del_\nabla ^\ast\delbar_\nabla  \varphi^{2,0}+\delbar_\nabla ^\ast\delbar_\nabla  \varphi^{2,0} 
=\Delta_{\delbar_\nabla}\varphi^{2,0}\\
&=0
\end{align*}
On the other hand, note that $d^\ast_\nabla\varphi^{2,0}=(\del_\nabla ^\ast+\delbar_\nabla ^\ast)\varphi^{2,0}=0$, hence $d_\nabla d_\nabla ^{\ast}\varphi^{2,0}=0$  
and  consequently we obtain 
$$
0=\Delta_{d_\nabla }\varphi^{2,0}=\Delta_{\delbar_\nabla }\varphi^{2,0}
$$  
Now, to see that  $\varphi$ is parallel it is enough to proof that $d_7\circ d_7^\ast(\omega\otimes\varphi^0)=0$ and the result follows from the same argument to obtain \eqref{eq:varpi0parallel}. 
In principle $d_7\circ d_7^\ast(\varphi^{2,0}+\ol{\varphi^{2,0}} )\in \Omega^2_{6\oplus 1}(\fg_E)$, however since  $d_7\circ d_7^\ast(\omega\otimes\varphi^0)$ is type $(1,1)$ [cf. Lemma \ref{lem:previo 2}] we note that  
\begin{align*}
0&= 
\langle\varphi^{2,0}+\ol{\varphi^{2,0}}  ,  d_7\circ d_7^\ast(\omega\otimes\varphi^0) \rangle
=
\langle d_7\circ d_7^\ast(\varphi^{2,0}+\ol{\varphi^{2,0}} ), \omega\otimes\varphi^0 \rangle 
\end{align*} 
follows that $d_7\circ d_7^\ast(\varphi^{2,0}+\ol{\varphi^{2,0}} )\in \Omega^2_{6}(\fg_E)$. 
Note that 
$$ 
0  = d_7\circ d_7^\ast(\varphi) 
   = \underbrace{d_7\circ d_7^\ast(\varphi^{2,0}+\ol{\varphi^{2,0}} )}_{\Omega^2_{6}(\fg_E)} 
   + \underbrace{d_7\circ d_7^\ast(\omega\otimes\varphi^0)}_{\Omega^2_{1}(\fg_E)}\\ 
$$  
follows that   both  $\omega\otimes\varphi^0 $ and  $\varphi^{2,0}+\ol{\varphi^{2,0}}$ belong to  $ \ker(d_7\circ d_7^\ast)$.  
\end{proof} 
\section{Positivity conditions}
\label{sec:positivity}
The main argument to obtain the vanishing Theorem \ref{thm:main theorem} is to assume that  $\int_M\langle \cF(\varphi^{2,0}),\varphi^{2,0}\rangle> 0$ and $\int_M\langle \cR(\varphi^{2,0}),\varphi^{2,0}\rangle> 0$  for al $\varphi^{2,0}\neq 0$ in \eqref{eq:BoschnerMethod}. 
In  contrast with an analogous result in  Kähler surfaces \cite{itoh1983moduli} and its foliated version in $5$-dimensions \cite{Baraglia2016}, in our case, the function  $\langle \cF(\varphi^{2,0}),\varphi^{2,0}\rangle$ does not vanishes automatically. We can provide conditions for positivity of  $\langle \cR(\varphi^{2,0}),\varphi^{2,0}\rangle$.\  
We adopt the following notation, in a local chart  $(U,z_1,z_2,z_3,\xi)$, where $z_j=x_j-\ii\Phi x_j$, we  denote by $\{\frac{\partial}{ \partial z_i }\}_{i=1}^3 $ and $\{\frac{\partial}{\partial\zbar_i}\}_{i=1}^3 $ the basis for     $\rH^{1,0}$ and $\rH^{0,1}$  respectively  [cf. Appendix \ref{sec:tKahlerGeometry}], to simplify the notation, eventually we denote $ \frac{\partial}{ \partial z_i }=\partial_{z_i}$ and $\frac{\partial}{\partial\zbar_i}=\partial_{\zbar_i}$ and so on. 
\subsection{Positivity of  \texorpdfstring{$ \langle\cR\varphi^{2,0},\varphi^{2,0}\rangle$}{lg} 
}\label{sec:Ricc}
We analyze the function  $\langle\cR\varphi^{2,0},\varphi^{2,0}\rangle$ 
and we deduce  conditions under which  positive (cf. Proposition \ref{prop:Ricci} at the end of this section). In a suitable normal  frame such that $g_{\alpha\bar{\beta}}=\delta_{\alpha\beta}$ in a fixed point $x\in M$, the component $(\cR\varphi^{2,0})_{\mu\nu}$ in the Weitzenböck formula \eqref{eq:weitzenbock} are given by 
\begin{equation}\label{eq:cR2}
(\cR\varphi^{2,0})_{\mu\nu}
=\sum_{\alpha}  \left( \rR_{\bar{\alpha}\mu } \varphi_{\alpha\nu}-\rR_{\bar{\alpha}\nu } \varphi_{\alpha\mu}\right)     
\end{equation}  
In a orthonormal frame , the transversal Ricci operator  is defined by [cf. Definition \ref{def:RicciT}]
$$ 
\Ric^T(X)=\sum_{\mu}R(X,\partial_{\bar{\mu}})\partial_\mu. 
$$   
We extend it to a $2$--form by 
$ 
(\Ric^T\wedge \rI)(X,Y)=\Ric^T(X)\wedge Y+X\wedge  \Ric^T(Y), 
$ 
where $I$ is the identity operator.  Let $\varphi\in \Omega^{2,0}(\fg_E)$, hence 
\begin{align*}
(\cR\varphi^{2,0})( \partial_\mu,\partial_\nu )
&=
\sum_{\alpha}  \Big( (Ric^T)_{\mu\bar{\alpha}}\varphi^{2,0}(\partial_\alpha,\partial_\nu)+(Ric^T)_{\alpha\nu} \varphi^{2,0}(\partial_\mu,\partial_\alpha)\Big)\\
&=   
\varphi^{2,0} \left(\sum_{\alpha}\rR( \partial_\mu,\partial_{\bar{\alpha}} )\partial_\alpha,\partial_\nu\right)
+\varphi^{2,0}\left(\partial_\mu,\sum_{\alpha}\rR(\partial_{\bar{\alpha}},\partial_\nu)\partial_\alpha\right) \\
&=
\varphi^{2,0} \left(\Ric^T(\del_\mu),\partial_\nu\right)+ \varphi^{2,0}\left(\partial_\mu,\Ric^T(\del_\nu)\right) \\
&= 
\varphi^{2,0}( \Ric^T( \partial_\mu)\wedge\partial_\nu +  \partial_\mu\wedge \Ric^T( \partial_\nu))\\ 
&=
\left(\varphi^{2,0}\circ \Ric^T \wedge I\right) (\partial_\mu, \partial_\nu)   
\end{align*}  
From the above computation  we obtain  an expression for the action $\cR$ as follow
\begin{equation}\label{eq:RRiccI} 
\cR\varphi^{2,0}=\varphi^{2,0}\circ (\Ric^T\wedge \rI) 
\end{equation}
Note in particular that if $M$ is Sasaki-Einstein with constant $\lambda$, $Ric^T\wedge I=2\lambda I$.  The following follows 
\begin{proposition}\label{prop:Ricci}
Let $E\to M$ be a complex Sasakian vector bundle on a compact, connected  Sasakian manifold $(M,\cS)$  with  positive transverse Ricci curvature, and let $\nabla\in\cA(E)$ be an irreducible  SDCI such that $\int_M\langle\cF\varphi,\varphi\rangle\geq 0$ for all $\varphi\in \Omega^{2,0}(\fg_E)$. Then    $\rH^2_B=0$. [cf. Theorem \ref{thm:main theorem}].   
\end{proposition}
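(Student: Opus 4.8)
The plan is to reduce Proposition \ref{prop:Ricci} to the Vanishing Theorem \ref{thm:main theorem} by checking that its two hypotheses hold under the present assumptions. The hypothesis $\int_M\langle\cF\varphi,\varphi\rangle\geq 0$ is given directly; although Theorem \ref{thm:main theorem} was stated with the strict inequality $\int_M\langle\cF\varphi,\varphi\rangle>0$, inspection of its proof shows that only the \emph{sum} of the three terms in the Bochner identity \eqref{eq:BoschnerMethod} needs to be non-negative with at least one term strictly positive (or, more precisely, that non-negativity of all three terms forces each summand to vanish and hence $\nabla^{1,0}\varphi^{2,0}=0$, after which the argument in Proposition \ref{prop:Pararell} and the irreducibility of $\nabla$ finish the job). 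So the real content is to show $\int_M\langle\cR\varphi^{2,0},\varphi^{2,0}\rangle>0$ for every $0\neq\varphi^{2,0}\in\Omega^{2,0}(\fg_E)$ when $\Ric^T$ is positive.

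For this I would use the identity \eqref{eq:RRiccI}, $\cR\varphi^{2,0}=\varphi^{2,0}\circ(\Ric^T\wedge\rI)$, derived just above the statement. Pointwise, in a normal unitary frame at $x\in M$ with $g_{\alpha\bar\beta}=\delta_{\alpha\beta}$, the identity \eqref{eq:cR2} gives
\[
\langle\cR\varphi^{2,0},\varphi^{2,0}\rangle
=\sum_{\mu\nu}B\big((\cR\varphi^{2,0})_{\mu\nu},\ol{\varphi_{\mu\nu}}\big)
=\sum_{\alpha\mu\nu}\Big(\rR_{\bar\alpha\mu}B(\varphi_{\alpha\nu},\ol{\varphi_{\mu\nu}})-\rR_{\bar\alpha\nu}B(\varphi_{\alpha\mu},\ol{\varphi_{\mu\nu}})\Big),
\]
and by the skew-symmetry of $\varphi_{\mu\nu}$ the two sums agree, so this equals $2\sum_{\alpha\mu}\rR_{\bar\alpha\mu}\,b_{\mu\alpha}$ where $b_{\mu\alpha}:=\sum_\nu B(\varphi_{\alpha\nu},\ol{\varphi_{\mu\nu}})$. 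The key observation is that $A:=(\rR_{\bar\alpha\mu})$ is Hermitian positive definite (this is exactly positivity of the transverse Ricci form, using that $\Ric^T$ is a symmetric operator and the metric is transversely K\"ahler) and $B:=(b_{\mu\alpha})$ is Hermitian positive semi-definite — indeed $\sum_{\mu\alpha}\zeta^\mu\ol{\zeta^\alpha}b_{\mu\alpha}=\sum_\nu\|\sum_\alpha\ol{\zeta^\alpha}\varphi_{\alpha\nu}\|^2\geq 0$ for any vector $\zeta$. Writing $A=C\ol C^{\!\top}$ and $B=D\ol D^{\!\top}$ for invertible $C$ and some $D$, one gets $\tr(AB)=\tr(C\ol C^{\!\top}D\ol D^{\!\top})=\tr\big((\ol C^{\!\top}D)\ol{(\ol C^{\!\top}D)}^{\!\top}\big)\geq 0$, with equality iff $\ol C^{\!\top}D=0$, i.e. iff $D=0$, i.e. iff $B=0$, i.e. iff all $\varphi_{\alpha\nu}$ vanish at $x$. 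Hence $\langle\cR\varphi^{2,0},\varphi^{2,0}\rangle\geq 0$ pointwise, with strict inequality wherever $\varphi^{2,0}\neq 0$; integrating over $M$ gives $\int_M\langle\cR\varphi^{2,0},\varphi^{2,0}\rangle>0$ for any $\varphi^{2,0}\not\equiv 0$.

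With both hypotheses of Theorem \ref{thm:main theorem} now verified (the $\cR$-term strictly positive, the $\cF$-term non-negative), the Bochner identity \eqref{eq:BoschnerMethod} forces $\varphi^{2,0}=0$ for every class in $\rH^2_B$, and then the vertical part $\omega\otimes\varphi^0$ is killed exactly as in the last paragraph of the proof of Theorem \ref{thm:main theorem} using Lemma \ref{lem:previo 2} and irreducibility of $\nabla$. Therefore $\rH^2_B=0$, which is the assertion. The main obstacle I anticipate is the careful bookkeeping in the linear-algebra step — getting the index placement, the conjugations in $B(\cdot,\cdot)$, and the Hermitian-versus-symmetric distinctions right so that the $\tr(AB)\geq 0$ argument is genuinely valid — together with making explicit that Theorem \ref{thm:main theorem} remains applicable when the $\cF$-term is only assumed non-negative; I would state this relaxation as a short remark pointing back to the structure of \eqref{eq:BoschnerMethod}.
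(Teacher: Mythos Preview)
Your proposal is correct and follows essentially the same route as the paper. The paper derives the identity $\cR\varphi^{2,0}=\varphi^{2,0}\circ(\Ric^T\wedge\rI)$ in \eqref{eq:RRiccI} and then simply asserts that the proposition follows; your $\Tr(AB)$ argument with $A=(\rR_{\bar\alpha\mu})$ positive definite and $B=(b_{\mu\alpha})$ positive semi-definite is precisely the linear-algebra content implicit in that step, and your remark that the Bochner identity \eqref{eq:BoschnerMethod} only needs the $\cF$-term to be $\geq 0$ when the $\cR$-term is strictly positive is exactly the relaxation the paper silently invokes.
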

In the particular case of \textit{Sasaki-Einstein manifolds} (Proposition \ref{prop:sasaki eisntein}), we can argue as follow. The term in \eqref{eq:cR2}  is skew-symmetric in $\mu$ and $\nu$, hence, considering $\mu<\nu\in \{1,2,3\}$,   we obtain
\begin{align*}
 \cR_{12}(\varphi^{2,0})&=
 (\rR_{\bar{1}1 } \varphi_{1 2} - \rR_{\bar{1}2 } \varphi_{1 1} )
+(\rR_{\bar{2}1 } \varphi_{2 2} - \rR_{\bar{2}2 } \varphi_{2 1} )
+(\rR_{\bar{3}1 } \varphi_{3 2} - \rR_{\bar{3}2 } \varphi_{3 1} )\\ 
&=
 \rR_{\bar{1}1 } \varphi_{1 2} 
+\rR_{\bar{2}2 } \varphi_{1 2}  
+\rR_{\bar{3}1 } \varphi_{3 2} 
+\rR_{\bar{3}2 } \varphi_{1 3},\\ 
\end{align*}
we can do analogous computations to obtain 
\begin{equation}\label{eq:bochner theta} 
\begin{array}{l}
(\cR\varphi^{2,0})_{13}=(\rR_{\bar{1}1}+\rR_{\bar{3}3}) \varphi_{13}+\rR_{\bar{2}1 } \varphi_{2 3}+ \rR_{\bar{2}3 } \varphi_{12}  \\[4pt]
(\cR\varphi^{2,0})_{23}=(\rR_{\bar{1}1}+\rR_{\bar{3}3})\varphi_{23}
+ \rR_{\bar{1}2 }\varphi_{1 3}-\rR_{\bar{1}3 } \varphi_{1 2}  \end{array}
\end{equation} 
Note that in a point $x\in M$ we may assume that the Ricci tensor is diagonalized (cf. \cite[p.~227]{zheng2000complex}).
Hence, from \eqref{eq:bochner theta}  follows that   
If $M$ is Sasaki-Einstein\footnote{
In this case $\Ric=6g$ and $\Ric^T=8g^T$  cf. Definition \ref{def:sasaki einstein}} manifold $\rR_{\bar{i}i}>0$ hence Proposition \ref{prop:Ricci} follows, but in this case   we can obtain that $\langle \cR\varphi^{2,0},\varphi^{2,0}\rangle$ is positive definite  in a simpler way 
\begin{align*} 
\langle \cR\varphi^{2,0},\varphi^{2,0}\rangle
&=
(\rR_{\bar{1}1}+\rR_{\bar{2}2})\Vert\varphi_{12}\Vert^2+(\rR_{\bar{1}1}+\rR_{\bar{3}3})\Vert\varphi_{13}\Vert^2+(\rR_{\bar{2}2}+\rR_{\bar{3}3})\Vert\varphi_{23}\Vert^2\\
&> 0.\\
\end{align*}
\subsection{Pointwise description of  \texorpdfstring{$\langle\cF\varphi^{2,0},\varphi^{2,0}\rangle$}{lg}}\label{sec:positivityF} 
We maintain the notation for   $\varphi^{2,0}\in \Omega^{2,0}(\fg_E)$ with the same hypothesis as in the proof of the Theorem \ref{thm:main theorem}.
We want write down an expression for $\langle \cF \varphi^{2,0},\varphi^{2,0}\rangle, $ fix a point $x\in M$ and  normal transverse coordinates such that $g^{\alpha\bar{\beta}}=\delta^\alpha_\beta$, hence 
$$  
(\cF\varphi^{2,0})_{\mu\nu} = \sum_{\alpha }\left([\varphi_{\alpha\nu},\rF_{\mu\bar{\alpha}}]
-[\varphi_{\alpha\mu},\rF_{\nu\bar{\alpha}}]\right). 
$$  
Since $(\cF\varphi^{2,0})_{\mu\nu}$ is anti-symmetric in $\mu,\nu$  on a $7$-dimensional manifold we consider $\mu<\nu\in \{1,2,3\}$,   we obtain 
\begin{align*}
(\cF\varphi^{2,0})_{12}
  &= [\varphi_{1 2},\rF_{1\bar{1}}]-[\varphi_{1 1},\rF_{2\bar{1}}] 
   + [\varphi_{2 2},\rF_{1\bar{2}}]-[\varphi_{2 1},\rF_{2\bar{2}}] 
   + [\varphi_{3 2},\rF_{1\bar{3}}]-[\varphi_{3 1},\rF_{2\bar{3}}] \\
 &= [\varphi_{1 2},\rF_{1\bar{1}}] +[\varphi_{12},\rF_{2\bar{2}}] 
    + [\varphi_{3 2},\rF_{1\bar{3}}]-[\varphi_{3 1},\rF_{2\bar{3}}] \\
 &= [\varphi_{13},\rF_{2\bar{3}}] -[\varphi_{23},\rF_{1\bar{3}}]-[\varphi_{12},\rF_{3\bar{3}}],
\end{align*}
where we used that $\rF_{1\bar{1}} +\rF_{2\bar{2}}+\rF_{3\bar{3}}=0$ [cf. Proposition \ref{prop: selfdual = HYM}],  analogous computations for $\cF_{13}$ and $\cF_{23}$ provide 

\begin{equation}\label{eq:bochner Xi}
\begin{array}{l}
(\cF\varphi^{2,0})_{12}=[\varphi_{32},\rF_{1\bar{3}}] + [\varphi_{13},\rF_{2\bar{3}}] + [\varphi_{21},\rF_{3\bar{3}}]\\[4pt]
(\cF\varphi^{2,0})_{13}=[\varphi_{23},\rF_{1\bar{2}}] + [\varphi_{31},\rF_{2\bar{2}}] + [\varphi_{12},\rF_{3\bar{2}}] \\[4pt]
(\cF\varphi^{2,0})_{23}=[\varphi_{32},\rF_{1\bar{1}}] + [\varphi_{13},\rF_{2\bar{1}}] + [\varphi_{21},\rF_{3\bar{1}}]  
\end{array}
\end{equation}

We recall the ad-invariance of the metric in $\fg$, i.e., for all $a,b,c\in\fg$,  
$ 
B([a,b],c)=-B(a,[b,c]),
$  
then we have   
\begin{align*}
\left< \cF(\varphi^{2,0}),\varphi^{2,0}\right>=& 
\sum_{\mu\nu}B\Big(\sum_{\alpha}
     ([\varphi_{\alpha\nu},\rF_{\mu\bar{\alpha}}]
     -[\varphi_{\alpha\mu},\rF_{\nu\bar{\alpha}}]),\varphi_{\mu\nu}\Big)\\ 
=&-\sum_{\mu\nu \alpha}
 B\Big([\varphi_{\alpha\nu},\varphi_{\mu\nu}],\rF_{\mu\bar{\alpha}}\Big)
-B\Big([\varphi_{\alpha\mu},\varphi_{\mu\nu}],\rF_{\nu\bar{\alpha}}\Big)   
\end{align*}
Hence, for the non zero components of $\varphi^{2,0}$ we have
\begin{align*}
-\left< \cF(\varphi),\varphi\right >=& 
  B([\varphi_{12},\varphi_{12}],\rF_{1\bar{1}}) 
- B([\varphi_{21},\varphi_{12}],\rF_{2\bar{2}})
+ B([\varphi_{32},\varphi_{12}],\rF_{1\bar{3}}) 
- B([\varphi_{31},\varphi_{12}],\rF_{2\bar{3}})\\
+& 
  B([\varphi_{13},\varphi_{13}],\rF_{1\bar{1}}) 
+ B([\varphi_{23},\varphi_{13}],\rF_{1\bar{2}}) 
- B([\varphi_{21},\varphi_{13}],\rF_{3\bar{2}})
- B([\varphi_{31},\varphi_{13}],\rF_{3\bar{3}})\\
+& 
  B([\varphi_{13},\varphi_{23}],\rF_{2\bar{1}}) 
- B([\varphi_{12},\varphi_{23}],\rF_{3\bar{1}})
+ B([\varphi_{23},\varphi_{23}],\rF_{2\bar{2}}) 
- B([\varphi_{32},\varphi_{23}],\rF_{3\bar{3}})\\
\end{align*}
by reducing the last expression  we obtain 
\begin{align*}
\left< \cF(\varphi),\varphi\right>&= 
   B([\varphi_{13},\varphi_{23}],\rF_{1\bar{2}})
  -B([\varphi_{12},\varphi_{23}],\rF_{1\bar{3}}) 
  -B([\varphi_{13},\varphi_{12}],\rF_{2\bar{3}})\\
& \;
  -B([\varphi_{13},\varphi_{23}],\rF_{2\bar{1}}) 
  +B([\varphi_{12},\varphi_{23}],\rF_{3\bar{1}})
  +B([\varphi_{13},\varphi_{12}],\rF_{3\bar{2}})\\
&= 
   B([\varphi_{13},\varphi_{23}],\rF_{1\bar{2}}-\rF_{2\bar{1}})
  +B([\varphi_{12},\varphi_{23}],\rF_{3\bar{1}}-\rF_{1\bar{3}}) 
  +B([\varphi_{13},\varphi_{12}],\rF_{3\bar{2}}-\rF_{2\bar{3}})\\
&= 
   B([\varphi_{13},\varphi_{23}],\rF_{1\bar{2}}+\ol{\rF_{1\bar{2}}})
  +B([\varphi_{12},\varphi_{23}],\rF_{3\bar{1}}+\ol{\rF_{3\bar{1}}}) 
  +B([\varphi_{13},\varphi_{12}],\rF_{3\bar{2}}+\ol{\rF_{3\bar{2}}})  
\end{align*}    
From Proposition \ref{prop: selfdual = HYM} that $\ol{\rF_\nabla}=\rF_\nabla$, hence 
$\rF_{\mu\bar{\nu}}=-\ol{\rF_{\nu\bar{\mu}}}$, consequently
\begin{equation}\label{eq:I}
\langle \cF(\varphi),\varphi\rangle=
2\Big\{\langle [\varphi_{13}, \varphi_{23}],\Re(\rF_{1\bar{2}})\rangle
      +\langle [\varphi_{12}, \varphi_{23}],\Re(\rF_{3\bar{1}})\rangle 
      +\langle [\varphi_{12}, \varphi_{13}],\Re(\rF_{2\bar{3}})\rangle\Big\}
\end{equation}
Now  we use the fact that $F_\nabla\in\Omega_8^2(\fg_E)$ is a SDCI and  $\varphi^{2,0}$  is the $(2,0)$-part of $\varphi\in\Omega_{6\oplus 1}^2(\fg_E)$.   We recall that $\Omega_8^2$ is locally generated   cf. \cite[eq.~(2.17) and (2.19)]{portilla2023}
\begin{equation}\label{eq:wi}
\begin{array}{*{2}c}
w_1 = \frac{1}{2}(dz^1\wedge d\zbar^2-dz^2\wedge d\zbar^1) , & w_2 = \frac{\ii}{2}(dz^1\wedge d\zbar^2+dz^2\wedge d\zbar^1), \\[3pt]
w_3 = \frac{1}{2}(dz^1\wedge d\zbar^3-dz^3\wedge d\zbar^1),  & w_4 = \frac{\ii}{2}(dz^1\wedge d\zbar^3+dz^3\wedge d\zbar^1), \\[3pt]
w_5 = \frac{1}{2}(dz^2\wedge d\zbar^3-dz^3\wedge d\zbar^2) , & w_6 = \frac{\ii}{2}(dz^2\wedge d\zbar^3+dz^3\wedge d\zbar^2)  \\[3pt] 
w_7 = \frac{\ii}{2}(dz^1\wedge d\zbar^1-dz^3\wedge d\zbar^3),& w_8 = \frac{\ii}{2}(dz^2\wedge d\zbar^2-dz^3\wedge d\zbar^3).  \end{array} 
\end{equation} 
 and  $\Omega^2_6$ is locally generated by   
\begin{equation}
\begin{array}{*{2}c}\label{eq:vi}
v_1 = \frac{1}{2}(dz^1\wedge dz^2+d\zbar^1\wedge d\zbar^2) , & v_2 = \frac{\ii}{2}(d\zbar^1\wedge d\zbar^2-dz^1\wedge dz^2),  \\[3pt]
v_3 =\frac{1}{2}(dz^1\wedge dz^3+d\zbar^1\wedge d\zbar^3),   & v_4 = \frac{\ii}{2}(d\zbar^1\wedge d\zbar^3-dz^1\wedge dz^3),  \\[3pt]
v_5 =  \frac{1}{2}(dz^2\wedge dz^3+d\zbar^2\wedge d\zbar^3), & v_6 = \frac{\ii}{2}(d\zbar^2\wedge d\zbar^3-dz^2\wedge dz^3).
\end{array}
\end{equation} 
We can locally write $F_\nabla=\sum_{i=1}^8 w_i\otimes a_i$ for some $a_i\in \su(r)$. Since $\sum_\mu \rF_{\mu\bar{\mu}}=0,$ 
and $\rF_{\mu\bar{\nu}}=-\ol{\rF_{\nu\bar{\mu}}},$ from \eqref{eq:wi} we have
$$
\begin{array}{lll}
\rF_{1\bar{1}}=\frac{\ii }{2}a_7, &\rF_{2\bar{2}}=\frac{\ii }{2}a_8, & \rF_{3\bar{3}}=-\frac{\ii }{2}a_7-\frac{\ii }{2}a_8\\ [4pt]
\rF_{1\bar{2}}=\frac{a_1}{2}+\frac{\ii a_2}{2}, & \rF_{1\bar{3}}=\frac{a_3}{2}+\frac{\ii a_4}{2} & \rF_{2\bar{3}}=\frac{a_5}{2}+\frac{\ii a_6}{2},    
\end{array}
$$
according to this  \eqref{eq:I} is  
$ 
 \left< \cF(\varphi),\varphi\right>=
\langle[\varphi_{13},\varphi_{23}], a_1 \rangle
+ \langle[\varphi_{12},\varphi_{23}],-a_3 \rangle 
+ \langle[\varphi_{12},\varphi_{13}], a_5 \rangle.   
$\\

On the other hand, denoting by  $\varphi=\sum_i v_i\otimes b_i$ for some $b_i\in \su(r)$ \eqref{eq:vi}, the  $(2,0)$-part is 
\begin{align*} 
\varphi^{2,0}&=
        \left(\frac{b_1}{2}-\frac{\ii b_2}{2}\right)dz^1\wedge dz^2
+       \left(\frac{b_3}{2}-\frac{\ii b_4}{2}\right)dz^1\wedge dz^3
+       \left(\frac{b_5}{2}-\frac{\ii b_6}{2}\right)dz^2\wedge dz^3 
\end{align*}
Consequently, 
\begin{equation}\label{eq:2F}
2\Re\langle \cF(\varphi^{2,0}),\varphi^{2,0}\rangle 
=  \langle [b_3,b_5]-[b_4,b_6],a_1\rangle 
-  \langle [b_1,b_5]-[b_2,b_6],a_3\rangle   
+  \langle [b_1,b_3]-[b_2,b_4],a_5\rangle      
\end{equation}

Accordingly to \eqref{eq:2F}, for  U$(1)$-bundles it is clear that $\cF$ is positive, consider the following example, this  example of Sasakian structure is  constructed by the first author and A. Moreno  in \cite{andres}.\footnote{For details on the construction of the Sasakian structure for this example, we refer to \cite[Section  2.2]{andres}.} 
\begin{example}\label{ex:stiefel}
We consider a homogeneous  principal $\SO(3)$-bundle $P_1=\SO(5)\times_{\SO(3)}\SO(3)\to V^{5,2}$ on  the Stiefel manifold $V^{5,2}=\SO(5)/\SO(3)$. Denote by $E_{ij}$  the $5\times 5$ matrix with $1$ at the $(ij)$-entry and $0$ elsewhere  and a fixed basis of $\so(5)$  
\begin{align}\label{eq: so(5)_basis}
\begin{split}
e_1=E_{12}-E_{21},&\quad e_2= E_{13}-E_{31}, \quad e_5=E_{23}-E_{32}, \quad e_8=E_{34}-E_{43}, \\
                  &\quad e_3=E_{14}-E_{41},  \quad e_6=E_{24}-E_{42}, \quad e_9=E_{35}-E_{53},  \\
                  &\quad e_4=E_{15}-E_{51},  \quad e_7=E_{25}-E_{52}, \quad e_{10}=E_{54}-E_{45}, 
\end{split}
\end{align} 
Denote by $\fm_1:=\Span\{e_{1}\},$ $\fm_2:=\Span\{ e_2,e_3,e_4\}$, and $\fm_3:=\Span\{ e_5,e_6,e_7\}$ . The subalgebra $\so(2)\oplus\so(3)=\langle e_1,e_8,e_9,e_{10}\rangle \subset\so(5)$ provides a $\ad(\so(3))$-invariant reductive decomposition $\so(5)=\so(2)\oplus\so(3)\oplus \fm_2 \oplus\fm_3$ such that $V^{5,2}\to \SO(5)/(\SO(2)\times\SO(3))$ is a circle fibration. For certain conditions on $\{y_1, y_2 ,y_3\}\subset \R^+$  \cite{andres}, the  $\SO(5)$-invariant Sasakian metric  on $V^{2,0}$ are given by  
$$
 g=y_1B\vert_{\fm_1}+y_2B\vert_{\fm_2}+y_3B\vert_{\fm_3}.
$$ 
Consider the homogeneous  principal $\SO(3)$-bundle  $P_1=\SO(5)\times_{\SO(3)}\SO(3)\to V^{5,2}$ induced by  the   identity groups homomorphism \cite[Lemma~1.4.5]{cap2009parabolic}.\\

The linear map $\alpha\colon\so(5)\rightarrow \so(3)$  given by 
$\alpha=e^8\otimes e_8+e^9\otimes e_9+e^{10}\otimes e_{10}\in \cA(P_1)$  is an invariant SO$(3)$-connection with curvature 
$ 
F_\alpha:=\frac{1}{y_2}(w_1\otimes e_8+ w_3\otimes e_9+w_5\otimes e_{10}),
$ 
where  $y_2>0.$  For $y_1=\frac{9}{16}$, $y_2=y_3=\frac{3}{8}$ the metric $g$ is Sasaki-Einstein $\alpha$  is a SDCI.  
From \eqref{eq:2F} we have,
\begin{equation}\label{eq:2F1}
2\Re\langle \cF(\varphi^{2,0}),\varphi^{2,0}\rangle 
 =  \langle [b_3,b_5]+[b_6,b_4],e_8\rangle 
+  \langle [b_5,b_1]+[b_2,b_6],e_9\rangle   
+  \langle [b_1,b_3]+[b_4,b_2],e_{10}\rangle.    
\end{equation}  
Therefore according to our convention, we can suppose that $b_j=a_{ji}e_i$  where $a_{ij}\in \R$, $j=1,\cdots,6$ and $e_i\in\so(3)$. since $[e_i,e_j]
=\epsilon_{ijk}e_k$ for $i,j,k\in \{8,9,10\}$ \eqref{eq: so(5)_basis}, we have that 
\begin{align*}
 [b_j, b_k]&=[a_{ji}e_i, a_{kl}e_l]= a_{ji}a_{kl}[e_i,e_l]
 =a_{ji}a_{kl}\epsilon_{ilt}e_t.
\end{align*}
Hence, for example  
$$
[b_3,b_5]+[b_6,b_4] =
a_{3i}a_{5l}\epsilon_{ilt}e_t+a_{6i}a_{4l}\epsilon_{ilt}e_t 
$$  
In the above equality  we set $t=8$ to obtain the non zero value in the first term of \eqref{eq:2F1}, we obtain 
$$
(a_{3i}a_{5l}\epsilon_{il8}+a_{6i}a_{4l}\epsilon_{il8})e_8=
(a_{39}a_{5\;10 }-a_{3\;10}a_{59}+a_{69}a_{4\;10}-a_{6\;10}a_{49})e_8
$$
we can make  analogous computations for $ [b_5,b_1]+[b_2,b_6]$ and $ [b_1,b_3]+[b_4,b_2]$ however is clear that $\Re\langle \cF(\varphi^{2,0}),\varphi^{2,0}\rangle$ can assume  both  positive and negative sign for appropriate choices of $a_{ij}\in \R$. 
\end{example} 
\subsubsection{An Estimate}\label{sec:estimate}
In this Section, we maintain the notation and the hypothesis as in the proof of  Theorem \ref{thm:main theorem}. Let $\varphi^{2,0}\in \Omega^{2,0}(\fg_E)$ and denote by $\lambda$ the minimal eigenvalue of the operator $\Ric^T\wedge I$ \eqref{eq:RRiccI}. From \eqref{eq:BoschnerMethod}  we have
\begin{equation}\label{eq:Bochner2}
(  \nabla^{1,0}\varphi^{2,0},\nabla^{1,0}\varphi^{2,0} ) 
\leq -\int_M \Big\{\langle \cF(\varphi^{2,0}),\varphi^{2,0}\rangle
+\vert \lambda\vert  \Vert \varphi^{2,0}\Vert^2 \Big\} 
\end{equation}
Note that if the subintegral quantity in the right hand side of \eqref{eq:Bochner2} is positive,  this produce a contradiction  which indicates that $\varphi^{2,0}$ must be zero. Now, since both $\langle\cF(\varphi^{2,0}),\varphi^{2,0}\rangle
$ and $ \Vert \varphi^{2,0}\Vert^2 $ are homogeneous quadratic in the variable $\varphi^{2,0}$, hence we look for conditions in which $\vert  \langle\cF(\varphi^{2,0}),\varphi^{2,0}\rangle\vert< \vert \lambda\vert  \Vert \varphi^{2,0}\Vert^2$, because, in this case  the positive term $\vert \lambda\vert  \Vert \varphi^{2,0}\Vert^2$ dominates the right hand side of \eqref{eq:Bochner2}.\\

For $1\leq \mu,\nu\leq 3$, we define the symmetric, trace  free matrix $A=(a_{\mu\nu})$ where $a_{\mu\nu}=\Vert\varphi_{\mu\nu}\Vert$, analogously define $B=(b_{\mu\nu})$ where  $b_{\mu\nu}=\Vert\rF_{\mu\bar{\nu}}\Vert.$
Denoting by $\varphi^{2,0}=\frac12\sum\varphi_{\mu\nu}dz^\mu\wedge dz^\nu$ and $\cF\varphi^{2,0}=\frac12\sum F_{\mu\bar{\nu}}dz^\mu\wedge d\zbar^\nu$ we have that $2\langle\cF(\varphi^{2,0}),\varphi^{2,0}\rangle=\sum_{\mu\nu \alpha}
\langle[\varphi_{\alpha\nu},\varphi_{\nu\mu}],\rF_{\mu\bar{\alpha}}\rangle $ and we note that 
\begin{align*}
\Big\vert\sum_{\mu\nu \alpha}
\langle[\varphi_{\alpha\nu},\varphi_{\nu\mu}],\rF_{\mu\bar{\alpha}}\rangle\Big\vert
&\leq 
\sum_{\mu\nu\alpha}
\vert\langle[\varphi_{\alpha\nu},\varphi_{\nu\mu}],\rF_{\mu\bar{\alpha}}\rangle \vert    
\leq 
\sum_{\mu\nu\alpha}\Vert  [\varphi_{\alpha\nu},\varphi_{\nu\mu}]\Vert \Vert \rF_{\mu\bar{\alpha}}\Vert  \\
&\leq 
\sqrt{2}\sum_{\mu\nu\alpha}\Vert\varphi_{\alpha\nu}\Vert\Vert \varphi_{\nu\mu}]\Vert \Vert\rF_{\mu\bar{\alpha}}\Vert
=\sqrt{2}\Tr(A^2B)\\
&\leq 
\sqrt{2}\Vert F^\nabla\Vert \Vert\varphi\Vert^2. 
\end{align*} 
The last line is justified as follow, first we used that $\Vert [A,B] \Vert \leq\sqrt{2}\Vert A\Vert\Vert B\Vert$ (cf. \cite[Lema 2.30]{bourguignon1981stability}). And on  other hand note that  since $\Vert\varphi\Vert^2=\sum_{\mu\nu}\Vert \varphi_{\mu\nu}\Vert^2$ define a norm for linear maps $\varphi\colon\Lambda^{2,0}\to \fg_E$, hence we have that 
\begin{align*}
\Vert A\Vert^2 
&=\Tr(A^2)
=\sum_{\mu\nu}a_{\mu\nu}a_{\nu\mu}
=\sum_{\mu\nu}a_{\mu\nu}^{2}
=\sum_{\mu\nu}\Vert\varphi_{\mu\nu}\Vert^{2} 
=  \Vert\varphi\Vert^2    
\end{align*} 
and analogously for $\Vert B\Vert^2$.  Consequently, 
$ 
\tr(A^2B)  \leq \Vert A^2\Vert\Vert B\Vert 
\leq\Vert A\Vert^2\Vert B\Vert= \Vert\varphi\Vert^2\Vert B\Vert= \Vert\varphi  \Vert^2\Vert F^\nabla\Vert. 
$ 
We summarize the discussion in this section in the   following 
\begin{proposition}\label{prop:energyF}
Let $(M,\cS)$ be a compact, connected Sasakian manifold [cf. Definition \ref{thm:app sasakian manifold}] with $\Ric^T>0$, and assume that the minimal eigenvalue of the operator $\Ric^T\wedge I$ is positive \eqref{eq:RRiccI}.  
let $E\to M$ be a complex Sasakian Vector bundle \cite{Boyer2008} and   $\nabla\in\cA(E)$  an irreducible  SDCI with curvature $2$-form $\rF_\nabla$. If the curvature $2$--form satisfy $\Vert F^\nabla\Vert< \sqrt{2} \lambda $ then  $h^2=0$ \eqref{eq:Complexd7}.    
\end{proposition}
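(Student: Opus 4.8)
The plan is to run the Bochner method of Theorem~\ref{thm:main theorem} once more, this time estimating the curvature term $\cF$ against the transverse Ricci term $\cR$ rather than assuming both are separately positive. Arguing by contradiction, suppose $h^2\neq 0$. Exactly as in the proof of Theorem~\ref{thm:main theorem}, there is then a nonzero $\varphi=\varphi^{2,0}+\ol{\varphi^{2,0}}+\omega\otimes\varphi^0\in\ker(d_7\circ d_7^\ast)$ with $\Delta_{\partial_\nabla}\varphi^{2,0}=0$, and the integrated Weitzenböck formula \eqref{eq:BoschnerMethod} holds. First I would bound the Ricci term from below: by \eqref{eq:RRiccI} the endomorphism $\cR$ acts only on the form part of $\varphi^{2,0}$, as composition with the self-adjoint operator $\Ric^T\wedge I$ on $\Lambda^{2,0}$, whose minimal eigenvalue is, by hypothesis, at least $\lambda>0$ at every point of $M$; hence $\langle\cR\varphi^{2,0},\varphi^{2,0}\rangle\geq\lambda\,\Vert\varphi^{2,0}\Vert^2$ pointwise, which is the inequality already recorded in \eqref{eq:Bochner2}:
\[
\bigl(\nabla^{1,0}\varphi^{2,0},\nabla^{1,0}\varphi^{2,0}\bigr)\;\leq\;-\int_M\Bigl\{\langle\cF\varphi^{2,0},\varphi^{2,0}\rangle+\lambda\,\Vert\varphi^{2,0}\Vert^2\Bigr\}.
\]

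Next I would control the curvature term. Starting from the identity $2\langle\cF\varphi^{2,0},\varphi^{2,0}\rangle=\sum_{\mu\nu\alpha}\langle[\varphi_{\alpha\nu},\varphi_{\nu\mu}],\rF_{\mu\bar\alpha}\rangle$, the commutator inequality $\Vert[a,b]\Vert\leq\sqrt2\,\Vert a\Vert\,\Vert b\Vert$ from \cite[Lemma~2.30]{bourguignon1981stability} and the elementary trace bound $\tr(A^2B)\leq\Vert A\Vert^2\Vert B\Vert$ applied to the symmetric matrices $A=(\Vert\varphi_{\mu\nu}\Vert)$ and $B=(\Vert\rF_{\mu\bar\nu}\Vert)$ — together with $\Vert A\Vert^2=\Vert\varphi^{2,0}\Vert^2$ and $\Vert B\Vert=\Vert\rF_\nabla\Vert$ — give the pointwise estimate $\bigl|\langle\cF\varphi^{2,0},\varphi^{2,0}\rangle\bigr|\leq\tfrac{1}{\sqrt2}\,\Vert\rF_\nabla\Vert\,\Vert\varphi^{2,0}\Vert^2$.

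Combining the two estimates, the integrand on the right-hand side of the displayed inequality is bounded below by $\bigl(\lambda-\tfrac{1}{\sqrt2}\Vert\rF_\nabla\Vert\bigr)\Vert\varphi^{2,0}\Vert^2$, and this is strictly positive at every point where $\varphi^{2,0}\neq 0$ precisely because the hypothesis reads $\Vert\rF_\nabla\Vert<\sqrt2\,\lambda$. Thus the right-hand side is $\leq 0$ while the left-hand side is $\geq 0$; both must vanish, which forces $\varphi^{2,0}\equiv 0$. By Remark~\ref{rem:varphiMaintheorem} — equivalently, by the last step of the proof of Theorem~\ref{thm:main theorem}, where irreducibility of $\nabla$ is used to conclude $\varphi^0=0$ once $\varphi^{2,0}=0$ — we get $\varphi=0$, contradicting $h^2\neq 0$. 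Hence $h^2=0$.

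The step I expect to be the main obstacle is not conceptual but bookkeeping: one has to verify carefully that the commutator bound and the trace bound compose to produce exactly the factor $\tfrac{1}{\sqrt2}$ in front of $\Vert\rF_\nabla\Vert$ — this is what makes the threshold come out as $\sqrt2\,\lambda$ and not $\lambda/\sqrt2$ — and one must make sure the eigenvalue estimate for $\Ric^T\wedge I$ is applied as a quadratic-form inequality valid on $\fg_E$-valued $(2,0)$-forms, which is legitimate exactly because \eqref{eq:RRiccI} exhibits $\cR$ as acting trivially on the $\fg_E$-factor. The rest is the integration by parts already carried out in \eqref{eq:BoschnerMethod}.
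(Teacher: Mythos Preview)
Your proposal is correct and follows essentially the same route as the paper. The paper's proof simply refers back to the estimate derived in Section~\ref{sec:estimate}: the same matrices $A=(\Vert\varphi_{\mu\nu}\Vert)$ and $B=(\Vert\rF_{\mu\bar\nu}\Vert)$, the same commutator bound from \cite{bourguignon1981stability}, and the same trace inequality $\Tr(A^2B)\leq\Vert A\Vert^2\Vert B\Vert$ yielding $|\langle\cF\varphi^{2,0},\varphi^{2,0}\rangle|\leq\tfrac{1}{\sqrt2}\Vert\rF_\nabla\Vert\,\Vert\varphi^{2,0}\Vert^2$, which combined with \eqref{eq:Bochner2} forces $\varphi^{2,0}=0$; the conclusion then follows from the final step of Theorem~\ref{thm:main theorem}, exactly as you outline.
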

\begin{proof}
If there exists    $\varphi=\varphi^{2,0}+\ol{\varphi^{2,0}}+\varphi^0\otimes \omega $ non zero section in  the same hypothesis as in the proof of Theorem \ref{thm:main theorem}, then  the previous discussion shows that  from \eqref{eq:Bochner2}   the  $(2,0)$--part $\varphi^{2,0}$ must be zero.  From this we can argue in the same way as in the  proof of  [cf. Theorem \ref{thm:main theorem}] \eqref{eq:varpi0parallel}.
\end{proof}
\subsubsection{The Yang-Mills condition and stability of a SDCI}
On the space $\cA(E)$  of $\rG$--connections it is defined the Yang-Mills  (YM) functional by 
\begin{equation}\label{eq:YMfunctional}
\cY\cM(\nabla):=\int_M\Vert F_\nabla\Vert^2,
\end{equation}
critical points of the YM-functional are called \textit{Yang-Mills (YM) connections}. In fact, $\nabla$ is a YM-connections if and only if 
$ 
d_\nabla^\ast F_\nabla=0.
$  
In general, from Bianchi identity, the contact instantons equation \eqref{eq: instanton} satisfy the YM--equation with torsion
\begin{equation}\label{eq:YMtorsion}
d_\nabla(\ast F_\nabla)=d\eta^2\wedge F_A.    
\end{equation} 
From Lemma \ref{lem:SDcharacterization} item (i), we know that   a SDCI satisfy 
$\langle F_A,d\eta\rangle=0$, 
follows that  
$ 
F_A\wedge\ast d\eta=\langle F_A,d\eta\rangle\vol= F_A\wedge\eta\wedge d\eta^2=0,
$
consequently  the torsion term is $F_A\wedge d\eta^2=0$  since  $\eta$ is nowhere vanishing, hence from \eqref{eq:YMtorsion} follows that  SDCI are YM-connections (the same argument holds for ASDCI).\\

Of particular interest are YM-connections which minimize \eqref{eq:YMfunctional} locally, at such a connections the second variation $ 
\dtzer\cY\cM(\nabla_t) 
$ 
is non-negative for any smooth family of connections $\nabla_t$, $\vert t\vert<\epsilon$ and $\nabla_0=\nabla$. A minimizing YM-connection is called \textbf{weak stable}.\ 
Let $\nabla=\nabla_0$ a YM-connection, denote by $B=\dtzero \nabla_t$  and define $\cR\colon \Omega^1(\fg_E)\to \Omega^1(\fg_E)$ given in an orthonormal frame $(X_1,\cdots X_n)$ by
$$
(\cR\alpha)(X)=\sum_{j=1}^n[F_{X_j,X},\alpha(X)].
$$
For variations $B\in \ker(d^\ast_\nabla)$ the second variation is given by \footnote{For a detailed study on stability of Yang-Mills connection the reader can consult \cite{bourguignon1981stability}}  
$ 
\dtzer\cY\cM(\nabla_t)=\int_M  \langle \mathscr{S}(B),B \rangle
$ 
(cf. \cite[Theorem 6.8]{bourguignon1981stability}) where 
\begin{equation}\label{eq:Secondvariation}
 \mathscr{S}(B)=\nabla^\ast\nabla B+B\circ \Ric+2\cR(B).  
\end{equation}  
We know that $\nabla^\ast\nabla$ is a positive operator, suppose furthermore that the Ricci tensor satisfy $\Ric\geq c I$ for some $c>0$, i.e., for all $X\in TM$, $\Ric(X,X)\geq c \Vert X \Vert^2$. Let 
$\{e_i\}$ be a orthonormal frame which diagonalize  the Ricci tensor, i.e., $\Ric(e_i)=\lambda_i e_i$. Then we have that 
\begin{align*}
\sum_i \Big \langle B\circ\Ric(e_i),B(e_i) \Big\rangle  
 &=\sum_i\lambda_i \Big \langle B(e_i),B(e_i) \Big\rangle\geq c \sum_i \Big \langle B(e_i),B(e_i) \Big\rangle\\
 &=c\Vert B\Vert^2
\end{align*}
On other hand,   
\begin{align*}
\Big \langle \cR(B),B\Big\rangle
&=
\Big\vert\sum_{ij}\Big\langle [F_{e_j,e_i},B_{e_j}],B_{e_i}\Big\rangle \Big\vert 
=\Big\vert\sum_{ij}\Big\langle F_{e_j,e_i},[B_{e_j},B_{e_i}]\Big\rangle \Big\vert\\
&\leq\sum_{ij} \Big\vert\Big\langle F_{e_j,e_i},[B_{e_j},B_{e_i}]\Big\rangle \Big\vert
\leq  \sum_{ij}  \Vert F_{e_j,e_i}\Vert\Vert [B_{e_j},B_{e_i}]\Vert\\
&\leq\sqrt{2}\sum_{ij}  \Vert F_{e_j,e_i}\Vert\Vert B_{e_j}\Vert \Vert B_{e_i}\Vert 
\end{align*}
Define $A=(a_{ij})$ where $a_{ij}=\Vert F_{e_j,e_i}\Vert$ and $C=(c_{ij})$ where $c_{ij}=\Vert B_{e_i}\Vert\Vert B_{e_j}\Vert$, note that $A$ and $B$ are symmetric and $\Vert C\Vert^2=\Tr(C^2)\leq \Tr(C)^2$. Furthermore note that $\Tr(C)=\Vert B \Vert^2 $ 
\begin{align*}
\Big\langle\cR(B),B \Big\rangle &=\Tr(AC) 
\end{align*}
so 
$
\Big\langle\cR(B),B \Big\rangle 
\leq\sqrt{2}\Vert A\Vert\Vert C\Vert=\sqrt{2}\Vert F_\nabla\Vert\Vert B\Vert^2
$
hence taking the inner product with $B$ in \eqref{eq:Secondvariation} 
\begin{align*}
\Big \langle B\circ\Ric,B\Big\rangle+2\Big \langle \cR(B),B\Big\rangle
&\geq   c\Vert B\Vert^2-2\sqrt{2}\Vert B\Vert^2\Vert F_\nabla \Vert,
\end{align*}
follows that if $\Vert F_\nabla \Vert <\frac{c}{2\sqrt{2}} $ then $\nabla$ is stable as a Yang-Mills connection. We summarize this section in the following 
\begin{proposition}\label{cor:stabilityF}
Let $(M,\cS)$ be a compact,  Sasakian manifold [cf. Definition \ref{thm:app sasakian manifold}] such that the Ricci tensor satisfy $\Ric\geq c I$ for some $c>0$ and $E\to M$ a complex Sasakian vector bundle \cite{Boyer2008}. If $\nabla\in\cA(E)$ is a SDCI with curvature $2$-form $\rF_\nabla$, then $\nabla$ is Yang-Mills connection. Furthermore, if the the curvature $2$--form satisfy $\Vert F_\nabla\Vert< \frac{c}{2\sqrt{2}}$ then  $\nabla$ is stable. 
\end{proposition}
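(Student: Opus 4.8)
The plan is to treat the two assertions separately; both are already implicit in the discussion preceding the statement. For the Yang--Mills claim, I would start from the identity \eqref{eq:YMtorsion}, namely $d_\nabla(\ast F_\nabla)=d\eta^2\wedge F_A$, which follows from the Bianchi identity $d_\nabla F_\nabla=0$. Since $\nabla$ is an SDCI, Lemma~\ref{lem:SDcharacterization}(i) says that $F_A$ is of type $(1,1)$ and orthogonal to $d\eta$, so $\langle F_A,d\eta\rangle=0$. Rewriting this pairing via the Hodge star, $F_A\wedge\ast d\eta=\langle F_A,d\eta\rangle\dvol$, and identifying $\ast d\eta$ with a nonzero multiple of $\eta\wedge d\eta^2$ on the $7$-manifold, one gets $F_A\wedge\eta\wedge d\eta^2=0$; since $\eta$ is nowhere vanishing this forces $F_A\wedge d\eta^2=0$, so the torsion term in \eqref{eq:YMtorsion} vanishes and $d_\nabla(\ast F_\nabla)=0$. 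Hence $\nabla$ is a Yang--Mills connection.

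For the stability claim I would use the second variation formula \eqref{eq:Secondvariation}. Restricting to gauge-fixed variations $B\in\ker d_\nabla^\ast$ — which is harmless because $\cY\cM$ is gauge invariant — one has $\dtzer\cY\cM(\nabla_t)=\int_M\langle\mathscr{S}(B),B\rangle$ with $\mathscr{S}(B)=\nabla^\ast\nabla B+B\circ\Ric+2\cR(B)$. I would then estimate the three summands after pairing with $B$ and integrating. The term $\int_M\langle\nabla^\ast\nabla B,B\rangle=\Vert\nabla B\Vert_{L^2}^2\ge 0$ is harmless. For the Ricci term, diagonalizing $\Ric$ in a local orthonormal frame $\{e_i\}$ with eigenvalues $\lambda_i\ge c$ gives $\langle B\circ\Ric,B\rangle=\sum_i\lambda_i\Vert B(e_i)\Vert^2\ge c\Vert B\Vert^2$. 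For the curvature term, using ad-invariance of $B(\cdot,\cdot)$, the bound $\Vert[X,Y]\Vert\le\sqrt2\,\Vert X\Vert\,\Vert Y\Vert$ of \cite[Lemma~2.30]{bourguignon1981stability}, and the elementary inequality $\Tr(AC)\le\Vert A\Vert\,\Vert C\Vert$ applied to the symmetric matrices $A=(\Vert F_{e_j,e_i}\Vert)$ and $C=(\Vert B_{e_i}\Vert\,\Vert B_{e_j}\Vert)$ (noting $\Tr C=\Vert B\Vert^2$ and $\Vert C\Vert^2=\Tr(C^2)\le(\Tr C)^2$), one obtains $|\langle\cR(B),B\rangle|\le\sqrt2\,\Vert F_\nabla\Vert\,\Vert B\Vert^2$. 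Combining the three estimates yields
\[
\dtzer\cY\cM(\nabla_t)\ \ge\ \int_M\bigl(c-2\sqrt2\,\Vert F_\nabla\Vert\bigr)\Vert B\Vert^2,
\]
which is strictly positive for every $B\ne0$ as soon as $\Vert F_\nabla\Vert<c/(2\sqrt2)$; hence $\nabla$ is weakly stable.

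The analytic content here is light: the only points requiring care are purely bookkeeping — identifying $\ast d\eta$ correctly so that the torsion term genuinely vanishes in dimension $7$, and keeping the various $\sqrt2$'s consistent across the commutator estimate and the two Gram-type matrices. Both steps are already carried out in the text above, so the proof itself reduces to an assembly of \eqref{eq:YMtorsion}, \eqref{eq:Secondvariation}, and the elementary inequalities just listed.
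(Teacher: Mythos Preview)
Your proposal is correct and follows essentially the same route as the paper: the Yang--Mills claim is obtained exactly as in the discussion around \eqref{eq:YMtorsion} via $\langle F_A,d\eta\rangle=0$ and the identification $\ast d\eta=\eta\wedge d\eta^2$, and the stability claim reproduces the paper's estimate of the three terms in \eqref{eq:Secondvariation} using the same diagonalization of $\Ric$, the same commutator bound from \cite{bourguignon1981stability}, and the same Gram matrices $A,C$. If anything, your bookkeeping is slightly more careful (you write $|\langle\cR(B),B\rangle|\le\sqrt2\,\Vert F_\nabla\Vert\,\Vert B\Vert^2$ where the paper's display has a couple of equalities that should be inequalities).
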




 
 \appendix
\section{Transverse Kähler geometry on Sasakian manifolds}\label{sec:tKahlerGeometry}
In this section we compile some results on the transversal geometry of a Sasakian manifold, for more details we refer the reader to \cite{Boyer2008}. We adopt the following  definition \cite[Theorem~10]{boyer2007sasakian} 
\begin{definition}\label{thm:app sasakian manifold} 
The following are   equivalent definitions of a \textbf{Sasakian manifold} 
$(M,g)$,
\begin{itemize}
\item[(i)] There is a unit Killing  vector field $\xi\in\fX(M)$, such that the section $\Phi\in\Gamma(TM\otimes T^{\ast}M)$, defined by $\Phi(X)=-\nabla_{X}\xi$ satisfies the identity:
\begin{equation}
\label{sasakiana1}
(\nabla_{X}\Phi)(Y)=g(X,Y)\xi-g(\xi,Y)X
\end{equation}
\item[(ii)]  There is a unit killing vector field $\xi\in\fX(M)$, such that the Riemann curvature tensor $R$ of $(M,g)$ satisfies: 
\begin{equation}
\label{sasakiana2}
R(X,\xi)Y= g(\xi,Y)X-g(X,Y)\xi:=-(\nabla_{X}\Phi)(Y)
\end{equation}
\item[(iii)] The metric cone $(\R_{+}\times M,dr^{2}+r^{2}\cdot g)$ is Kähler.
\end{itemize}
\end{definition}

  We introduce the transverse Kähler structure both globally and locally, these two description are equivalent and readers are referred to \cite[Section~2.5]{Boyer2008} for a detailed treatment on this subject.  The action of the Reeb vector   $\xi$ on $\cC(M)$ has local orbits that defines a transverse holomorphic structure on the Reeb foliation $\cF_\xi$ in the following sense. Let $\{U_\alpha:=V_\alpha\times I\}$ be a foliated chart covering $M$, where $V_\alpha\subset \C^n$ and $I\subset\R$ are open sets.  Let  $\pi_\alpha\colon U_\alpha\to V_\alpha\subset \C^n$ be a submersion such that if $U_\alpha\cap U_\beta:=U_{\alpha\beta}\neq\emptyset$
$$
  \pi_\alpha\circ\pi_\beta^{-1}\colon \pi_\beta(U_{\alpha\beta}) \to \pi_\alpha(U_{\alpha\beta})
$$
is biholomorphism.   $\pi_\alpha$ provides a canonical  isomorphism $d\pi_\alpha\colon \rH_p\to \rT_{\pi_\alpha(p)}$ \eqref{eq:split of TM} for every $p\in U_\alpha$ and $\rH=\ker\eta$ is the kernel of $\eta$ generating the following split 
\begin{equation}\label{eq:split of TM}
    TM=\rH\oplus\R\cdot \xi
\end{equation}
Since $\xi $ generates isometries, the restriction  $g\vert_{\rH}$ gives a well defined Hermitian metric $g_\alpha^T$ on $V_\alpha$, this structure is in fact Kähler. Let $(z_1,\cdots,z_n,x)$ be coordinates for $U_\alpha$, the complexification $(\rH\otimes \C)^{0,1}$  is generates by elements of the form 
$ \partial_{z^i}-\eta(\partial_{z^i}\xi),$   where $  \partial_{z^i}:=\frac{\partial}{\partial z^i}$.
  Note that, from  $i_\xi d\eta=0$ and $i_\xi \eta=1$ follows 
$$
  d\eta(\partial_{z^i}-\eta(\partial_{z^i})\xi,\ol{\partial_{z^i}-\eta(\partial_{z^i})\xi})=d\eta(\partial_{z^i}, \ol{\partial}_{z^i}),
$$ 
  i.e., the closed  fundamental $2$-form $\omega_\alpha$ is $d\eta$ restricted  to ${\{x=constant\}\cap U_\alpha}$, hence  $ \pi_\alpha\circ\pi_\beta^{-1}\colon \pi_\beta(U_{\alpha\beta}) \to \pi_\alpha(U_{\alpha\beta})$  gives and isometry of Kähler manifolds. The restriction   $\omega^T_\alpha:=1/2 d\eta\vert_{\{x=constant\}}$ is the fundamental $2$-form for the Hermitian metric $g^T_\alpha$ on each $V_\alpha\subset\C^n$. Hence $\omega^T_\alpha$ is closed and $g^T_\alpha$ is Kähler on $V_\alpha$. The metric   $g^T=\{g^T_\alpha\}$ is defined as a collection of metrics in each coordinate chart with can be identified with the global tensor on $M$ via
\begin{equation}
    \label{eq:transverse metric}
  g^T(X,Y)=\frac{1}{2}d\eta(X,\Phi Y), X, Y\in\Gamma(\rH).
\end{equation} 
Hence, the metric $g^T$ is related to the Sasakian metric $g$ by:  
\begin{equation}
    \label{eq:g g^t relation}
  g = g^T + \eta\otimes\eta.
\end{equation}
Let $p: TM\to \rH$ be the natural  projection,  the connection $\nabla^T$ of  the transverse metric $g^T$ on $\cF_\xi$ is defined by
\begin{equation}\label{eq:nabla T} 
\nabla^T_XY=\begin{cases}
(\nabla_XY )^p,& X,Y\in \Gamma(\rH)  \\[3pt]
[\xi,Y]^p, & X=\xi \qandq Y\in \Gamma(\rH).
\end{cases}
\end{equation}
$\nabla^T $ is  the unique  torsion free  connection  satisfying $\nabla^T g^T = 0,$  also it is shown that  that $\nabla^T J = 0.$ Here we use the identification $d\pi_\alpha\colon \rH_p\to T_{\pi_\alpha(p)}V_\alpha$. Let $TM_\C$, $N_\xi^\C$ and $\rH_\C$ denote the complexification of $TM$,    $N_\xi$ and $\rH$ respectively. \\

A $k$--form  $\alpha\in\Omega^{k}(M)$ is called  \textbf{transverse} if $i_\xi\alpha=0,$ and it is called \textbf{basic form}  if $\alpha$ is transverse and $i_\xi d\alpha=0$. we denote my $\Omega^{k}_{H}(M)=\Gamma(\Lambda^{k}H^{\ast})$ the space of transverse $k$-forms. A transverse form  $\alpha$ is determined  by the values of $\alpha$ in $\Lambda^{p}\rH_\C$. We denote the complexification $\Phi_{\C}\colon  \rH_\C \to   \rH_\C$ of  $\Phi\in\End(TM)$ by the same symbol. 
Since  $\Phi\vert_{\rH}^{2}=-\mathbbm{1}$,   the eigenvalues of  $\Phi$ are $\pm\sqrt{-1}$, (we denote the   imaginary unit by $\ii$) this induces  a decomposition
\begin{equation}\label{eq:Hpq}
\rH_\C=\rH^{1,0}\oplus  \rH^{0,1}
\end{equation}
hence, for $p,q\geq 0$ we set $\rH^{p,q}:= \Lambda^{p} (\rH^{1,0})^\ast\otimes \Lambda^{q} (\rH^{0,1})^\ast \subset\Lambda^{p+q} (\rH)^\ast\otimes_{\R}\C,$ therefore  
\begin{equation}
\label{eq:Quebra1}
\Lambda^{d}(\rH_\C)^\ast=\bigoplus_{i=0}^{d} (\rH^{i,d-i})^\ast \quad\forall\quad 0<d<2n.
\end{equation}
A $p$--form  $\alpha$ is said to be  of type $(k,p-k)$, if the evaluation of $\alpha$ on $ \rH^{i,p-i}$ is zero for all  $i\neq k$, the decomposition \eqref{eq:Quebra1} gives a decomposition into a direct sum of vector bundles 
$ 
\Omega^{k}(\rH_\C)=\bigoplus_{i=0}^{k} \Omega^{i,k-i}(\rH_\C),
$ 
where $\Omega^{p,q}(\rH_\C)$ denotes $\Gamma(\Lambda^p(\rH_\C)^\ast\otimes\Lambda^q(\rH_\C)^\ast)$. Consequently,   we have a decomposition of bundles  
\begin{equation}\label{eq:DecompostionKforms}
\Omega^{k}(M)=\left(\bigoplus_{i=0}^{k} \Omega^{i,k-i}(\rH_\C) \right) \oplus \eta\otimes\left( \bigoplus_{j=0}^{k-1}\Omega^{j,k-j-1}(\rH_\C)\right). 
\end{equation}
\begin{proposition}\cite[Corollary.~3.1]{Biswas2010}\label{prop: omega (1,1)}
The transversal form  $\omega:=d\eta\vert_{H}$ is of type   $(1,1).$
\end{proposition}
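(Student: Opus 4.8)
The plan is to reduce the claim to the elementary linear-algebraic fact that a real $2$-form $\alpha$ on the complex vector bundle $(\rH,\Phi\vert_\rH)$ is of type $(1,1)$ if and only if it is $\Phi$-invariant, i.e.\ $\alpha(\Phi X,\Phi Y)=\alpha(X,Y)$ for all $X,Y\in\Gamma(\rH)$. Indeed, after extending $\alpha$ $\C$-bilinearly and using the splitting $\rH_\C=\rH^{1,0}\oplus\rH^{0,1}$ of \eqref{eq:Hpq}, on which $\Phi$ acts by $\pm\ii$, one has $\alpha(\Phi X,\Phi Y)=-\alpha(X,Y)$ on $\rH^{1,0}\times\rH^{1,0}$ and on $\rH^{0,1}\times\rH^{0,1}$ while $\alpha(\Phi X,\Phi Y)=\alpha(X,Y)$ on mixed pairs; hence $\Phi$-invariance is equivalent to the vanishing of the $(2,0)$ and $(0,2)$ components. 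So it suffices to prove that $\omega=d\eta\vert_\rH$ is $\Phi$-invariant.

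Next I would extract two facts from the Sasakian axioms in Definition~\ref{thm:app sasakian manifold}. First, since $\Phi(X)=-\nabla_X\xi$ and $\xi$ is Killing, $\Phi$ is skew-symmetric with respect to $g$: writing $\eta=g(\xi,\cdot)$ one gets $g(\Phi X,Y)=-(\nabla_X\eta)(Y)$, so $g(\Phi X,Y)+g(\Phi Y,X)=-(\nabla_X\eta)(Y)-(\nabla_Y\eta)(X)=-(\mathcal{L}_\xi g)(X,Y)=0$. Second, $\Phi\xi=-\nabla_\xi\xi=0$ because $g(\nabla_\xi\xi,Z)=-g(\nabla_Z\xi,\xi)=-\tfrac12 Z(|\xi|^2)=0$; combined with the skew-symmetry this gives $\eta(\Phi X)=g(\xi,\Phi X)=-g(\Phi\xi,X)=0$, so $\Phi$ maps $TM$ into $\rH$, which justifies using $\Phi^2=-\mathbbm{1}$ on $\rH$ (as recorded before \eqref{eq:Hpq}).

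With these in hand I would express $d\eta$ on $\rH$ through the transverse metric. From \eqref{eq:transverse metric} and \eqref{eq:g g^t relation}, for $X,Y\in\Gamma(\rH)$ one has $g(X,Y)=g^T(X,Y)=\tfrac12 d\eta(X,\Phi Y)$; replacing $Y$ by $\Phi Y$ and using $\Phi^2=-\mathbbm{1}$ on $\rH$ yields $d\eta(X,Y)=-2\,g(X,\Phi Y)$. Therefore, since $\Phi$ preserves $\rH$,
\[
d\eta(\Phi X,\Phi Y)=-2\,g(\Phi X,\Phi^2 Y)=2\,g(\Phi X,Y)=-2\,g(X,\Phi Y)=d\eta(X,Y),
\]
using the skew-symmetry of $\Phi$ in the third equality. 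Hence $\omega=d\eta\vert_\rH$ is $\Phi$-invariant, and by the first paragraph it is of type $(1,1)$. As an alternative I could invoke the local picture of the appendix: each $\omega^T_\alpha=\tfrac12 d\eta\vert_{\{x=\mathrm{const}\}}$ is the fundamental form of the Hermitian metric $g^T_\alpha$ on $V_\alpha\subset\C^n$ and the submersions $d\pi_\alpha$ are $\C$-linear isometries, so the type is intrinsic — this is the content of \cite{Biswas2010}.

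I expect no real obstacle: the only points needing care are the bookkeeping of the factor $\tfrac12$ in \eqref{eq:transverse metric} and the verification that $\Phi$ genuinely maps into $\rH$ before applying $\Phi^2=-\mathbbm{1}$ there.
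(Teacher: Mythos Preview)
Your argument is correct. The reduction to $\Phi$-invariance is the standard characterisation of type $(1,1)$ for real $2$-forms, and your derivation of the invariance from the Sasakian axioms is clean: the skew-symmetry of $\Phi$ with respect to $g$ follows from $\xi$ being Killing exactly as you write, the identity $d\eta(X,Y)=-2g(X,\Phi Y)$ on $\rH$ is a correct unwinding of \eqref{eq:transverse metric} and \eqref{eq:g g^t relation}, and the final chain of equalities is valid.

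There is nothing to compare against in the paper itself: the proposition is stated in the appendix with only a citation to \cite{Biswas2010} and no proof is given. Your write-up therefore supplies what the paper outsources. The alternative you sketch at the end (using the local submersions $\pi_\alpha$ and the fact that $\omega_\alpha^T$ is the fundamental form of a genuine Hermitian metric on $V_\alpha\subset\C^n$) is closer in spirit to how the appendix sets things up and is essentially the argument behind the cited corollary, but your intrinsic computation via $\Phi$-invariance is more self-contained and avoids any appeal to the foliated charts.
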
 
Let $U_\alpha$ and $U_\beta$ be foliated charts such that $U_\alpha\cap U_\beta\neq\emptyset$ with  coordinates on $U_\alpha$ and $U_\beta$ given respectively by  $(z^i,\cdots,z^n,x)$  and $(w^i,\cdots,w^n,y)$, then 
$ \frac{\partial z^i}{\partial\bar{w}^i}=0$ and $\frac{\partial z^i}{\partial  y}=0,$ 
hence, the type  $(p,q)$  of a $p+q$--form $\alpha$  does not depends on the chosen  chart and $\alpha$ can be written as $\alpha=a_{i_1\cdots i_p\bar{j}_1\cdots \bar{j}_q}dz^{i_1}\wedge\cdots\wedge dz^{i_p}\wedge d\zbar^{j_1}\wedge\cdots\wedge d\zbar^{j_q}$, furthermore,  the functions $a_{i_1,\cdots i_p,\bar{j}_1\cdots \bar{j}_q}$ does not depend of  the coordinate $x$.  Thus we have well-defined operators 
\begin{equation}\label{eq:partial_B}
\begin{array}{lll}
\partial_B :\Omega^{p,q}_B\to\Omega^{p+1,q}_B   &\text{and } &
\bar{\partial}_B:\Omega^{p,q}_B\to\Omega^{p,q+1}_B
\end{array}
\end{equation}
The exterior derivative restricts to basic forms $d_B = d\vert_{\Omega^p_B}$, hence we have the split $d_B = \partial_B + \bar{\partial}_B.$  Relatively to the transverse Hodge star operator $\ast_T$, for  \eqref{eq:partial_B} we have adjoins   
\begin{equation}\label{eq:AdjoinPartial_B}
\begin{array}{ll}
  \partial_B^\ast\colon &\Omega^{p,q}_B\to\Omega^{p-1,q}_B \\ 
                        &\partial_B^\ast:= -\ast_T \partial_B^\ast \ast_T   
 \end{array}
  \text{and}\;
 \begin{array}{ll}
   \bar{\partial}_B^\ast \colon &\Omega^{p,q}_B\to\Omega^{p,q-1}_B\\  
                                &\bar{\partial}_B^\ast:=-\ast_T\bar{\partial}_B^\ast \ast_T,
 \end{array}
\end{equation}
We define the  \textbf{transverse curvature operator} to be the curvature of the transverse Levi-Civita connection defined in \eqref{eq:nabla T}, namely 
\begin{equation}
    \label{eq:curvature T} 
\rR^T(X,Y)Z =\nabla^T_X\nabla^T_Y Z-\nabla^T_Y \nabla^T_X Z-\nabla^T_{[X,Y]}Z.
\end{equation} 
  One can   check that $\rR^T(X,\xi)Y = 0$. We have  the following \cite[Lemma~7.3.8]{Boyer2008}   
 \begin{proposition}\label{eq:curvature Sasakian 2}
  On a Sasakian manifold $(M,\eta,\xi,g,\Phi)$ the curvature tensor $\rR$ satisfy:
\begin{multicols}{2}
\raggedcolumns
\begin{enumerate}
\item $\rR(X,\xi)\xi =  X -\eta(X)\xi$
\item $ \rR(X,\xi)Y = g(\xi, Y )X -g(X, Y )\xi$
\item $ \rR(X,Y)\xi =  \eta(Y)X -\eta(X)Y$
\end{enumerate}
\end{multicols}
\end{proposition}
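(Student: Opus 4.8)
The plan is to obtain the three identities directly from Definition~\ref{thm:app sasakian manifold}(ii) together with the standard symmetries of the Riemann curvature tensor of the Levi-Civita connection; no genuine computation beyond a one-line application of pair symmetry is required. Throughout I use $\eta(X)=g(\xi,X)$ and that $\xi$ is a unit field, so $g(\xi,\xi)=1$. Identity (2) needs no work at all: it is precisely equation~\eqref{sasakiana2}, i.e. the defining property $\rR(X,\xi)Y=g(\xi,Y)X-g(X,Y)\xi$ of the Sasakian structure. (Should one prefer to start from Definition~\ref{thm:app sasakian manifold}(i), it follows by combining the relation $\rR(X,\xi)Y=-(\nabla_X\Phi)(Y)$ with $\Phi=-\nabla\xi$ and the identity~\eqref{sasakiana1}.)

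Identity (1) is then the specialization $Y=\xi$ of (2): substituting gives $\rR(X,\xi)\xi=g(\xi,\xi)X-g(X,\xi)\xi=X-\eta(X)\xi$, using $g(\xi,\xi)=1$ and $g(X,\xi)=\eta(X)$.

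Identity (3) I would deduce from (2) by the pair symmetry $g(\rR(X,Y)Z,W)=g(\rR(Z,W)X,Y)$ of the curvature tensor. For arbitrary $W$,
\begin{align*}
g(\rR(X,Y)\xi,W)
&=g(\rR(\xi,W)X,Y)
=-g(\rR(W,\xi)X,Y)\\
&=-g\big(\eta(X)W-g(W,X)\xi,\,Y\big)
=-\eta(X)g(W,Y)+\eta(Y)g(W,X)\\
&=g\big(\eta(Y)X-\eta(X)Y,\,W\big),
\end{align*}
where the passage to the second line uses the antisymmetry $\rR(\xi,W)=-\rR(W,\xi)$ and then (2) in the form $\rR(W,\xi)X=\eta(X)W-g(W,X)\xi$. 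Since $W$ is arbitrary and $g$ is nondegenerate, identity (3) follows.

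The only point demanding any care—and the nearest thing to an obstacle—is bookkeeping of conventions: one must confirm that the curvature convention implicit in~\eqref{sasakiana2} is the standard Levi-Civita one, so that both the antisymmetry $\rR(\xi,W)=-\rR(W,\xi)$ and the pair symmetry used above hold with exactly the signs written. Once the convention is pinned down, all three identities are immediate, which is why the statement is recorded here essentially as a reformulation of the definition (cf. \cite[Lemma~7.3.8]{Boyer2008}).
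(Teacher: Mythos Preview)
Your proof is correct: identity (2) is literally the defining equation~\eqref{sasakiana2}, identity (1) is its specialization $Y=\xi$, and your deduction of (3) via the pair symmetry $g(\rR(X,Y)Z,W)=g(\rR(Z,W)X,Y)$ is valid. The paper itself does not supply a proof---it simply records the proposition with a citation to \cite[Lemma~7.3.8]{Boyer2008}---so there is nothing to compare against; your argument is the natural one and would serve as a self-contained justification.
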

\begin{definition}\label{def:RicciT}
  The \textbf{ transverse Ricci curvature } is defined by the trace of the transverse curvature tensor $\rR^T$ \eqref{eq:curvature T}, i.e., in  an orthonormal basis $\{e_i\}$ of $\rH$ \eqref{eq:split of TM}, the transverse Ricci curvature is defined  by 
$$
  \Ric^T(X,Y)=\sum_i g(\rR^T(X,e_i)e_i,Y)=\sum_i g^T(\rR^T(X,e_i)e_i, Y).
$$
\end{definition}
\begin{proposition}\cite[Proposition~7.3.12]{Boyer2008} 
  \label{prop:Ric^t Ric}
  Let $(M,\xi, \eta, g, \Phi)$ be a K-contact manifold of dimension $2n+1$, then
\begin{itemize}
  \item[(i)] $\Ric(X,\xi)=2n\eta(X)$, for all $X\in TM$
  \item[(ii)] $ \Ric^T(X,Y)=\Ric(X,Y)+2g(X,Y)=\Ric(X,Y)+2g^T(X,Y),$ $\forall$ $X,Y\in \Gamma(\rH)$
\end{itemize}
\end{proposition}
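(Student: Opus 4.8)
This is stated as Boyer--Galicki [Prop.~7.3.12]; I would prove it using the K-contact structure equations recorded in the excerpt together with O'Neill's curvature formul\ae{} for the Reeb foliation. Recall that $\xi$ is a unit Killing field with $\Phi X=-\nabla_X\xi$, $\Phi\xi=0$, $\eta=g(\cdot,\xi)$, $\Phi^2=-\id+\eta\otimes\xi$, $g(\Phi\cdot,\Phi\cdot)=g-\eta\otimes\eta$, and, because the Reeb flow is by isometries, $\nabla_\xi\xi=0$ and $\nabla_\xi\Phi=0$. The plan is: obtain (i) by computing $\rR(\cdot,\xi)\xi$ and tracing, and obtain (ii) from the fact that locally $\pi_\alpha\colon U_\alpha\to V_\alpha$ is a Riemannian submersion onto the transverse Kähler metric $g^T_\alpha$ whose fibres (the Reeb orbits) are one-dimensional and totally geodesic, so that $\nabla^T$ of \eqref{eq:nabla T} is the Levi-Civita connection of $g^T_\alpha$ and O'Neill's formula relates $\rR^T$ to $\rR$.

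\emph{Step 1 (item (i)).} Expanding $\rR(X,\xi)\xi=\nabla_X\nabla_\xi\xi-\nabla_\xi\nabla_X\xi-\nabla_{[X,\xi]}\xi$ and substituting $\nabla_\xi\xi=0$, $\nabla_X\xi=-\Phi X$, $[X,\xi]=-\Phi X-\nabla_\xi X$, $\nabla_\xi\Phi=0$ and $\Phi^2=-\id+\eta\otimes\xi$, all cross terms cancel and one is left with $\rR(X,\xi)\xi=X-\eta(X)\xi$; this is Proposition~\ref{eq:curvature Sasakian 2}(1), valid on any K-contact manifold. Tracing the self-adjoint endomorphism $Z\mapsto Z-\eta(Z)\xi$ over a local orthonormal frame of $TM$ gives $\Ric(\xi,\xi)=(2n+1)-1=2n=2n\,\eta(\xi)$. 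For the mixed part, the Killing-field identity $\Ric(\xi,\cdot)^\sharp=-\operatorname{div}(\nabla_\cdot\xi)=\operatorname{div}\Phi$ together with the standard K-contact fact that $\operatorname{div}\Phi$ is proportional to $\xi$ (equivalently $\delta\,d\eta\in\mathbb{R}\eta$; it also falls out of the mixed component of O'Neill's formula, which vanishes for totally geodesic fibres) shows $\Ric(\xi,X)=0$ for $X\in\rH$. Bilinearity then gives $\Ric(X,\xi)=2n\,\eta(X)$ for all $X\in TM$.

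\emph{Step 2 (item (ii)).} Fix $p\in M$ and an orthonormal basis $\{e_1,\dots,e_{2n}\}$ of $\rH_p$. Since the leaves are totally geodesic, O'Neill's $T$-tensor vanishes and the only correction is the integrability tensor $A_XY=\tfrac12[X,Y]^{\mathrm{vert}}=-\tfrac12\,d\eta(X,Y)\,\xi$ for $X,Y\in\rH$. For $X\in\rH$, O'Neill's curvature formula gives $\langle\rR^T(X,e_i)e_i,X\rangle=\langle\rR(X,e_i)e_i,X\rangle+3\Vert A_Xe_i\Vert^2$. Summing over $i$: on one hand $\sum_i\langle\rR(X,e_i)e_i,X\rangle=\Ric(X,X)-\langle\rR(X,\xi)\xi,X\rangle=\Ric(X,X)-\Vert X\Vert^2$ by Step~1 (the vertical direction contributes $\langle X,X\rangle$); on the other hand, since $\omega^T=\tfrac12 d\eta$ restricted to $\rH$ is the Kähler form of $g^T=g|_{\rH}$, one has $\sum_i d\eta(X,e_i)^2=4\sum_i\bigl(\omega^T(X,e_i)\bigr)^2=4\Vert X\Vert^2$, whence $\sum_i\Vert A_Xe_i\Vert^2=\tfrac14\cdot 4\Vert X\Vert^2=\Vert X\Vert^2$. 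Therefore $\Ric^T(X,X)=\Ric(X,X)-\Vert X\Vert^2+3\Vert X\Vert^2=\Ric(X,X)+2\Vert X\Vert^2$, and polarizing, together with $g^T=g$ on $\rH$, yields $\Ric^T(X,Y)=\Ric(X,Y)+2g(X,Y)=\Ric(X,Y)+2g^T(X,Y)$.

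The conceptual content is light, so the main obstacle is purely bookkeeping: keeping all the normalizations consistent --- the factor $\tfrac12$ in $\omega^T=\tfrac12 d\eta$, the exact coefficients and signs in O'Neill's curvature and Ricci identities, and the constant relating $d\eta|_{\rH}$ to $\Phi$ --- and, in Step~1, nailing down that $\operatorname{div}\Phi=\pm 2n\,\xi$. An alternative that avoids O'Neill is to write $\nabla$ on basic fields in terms of $\nabla^T$ ($\nabla_XY=\nabla^T_XY-\tfrac12 d\eta(X,Y)\xi$, $\nabla_\xi Y=\nabla^T_\xi Y-\Phi Y$, $\nabla_X\xi=-\Phi X$ for $X,Y\in\rH$) and expand $\rR^T(X,Y)Z$ directly; this uses only the structure equations already in the excerpt but is computationally heavier.
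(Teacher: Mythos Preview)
The paper does not prove this proposition at all: it is quoted verbatim from Boyer--Galicki \cite[Proposition~7.3.12]{Boyer2008} and used as a black box, so there is no ``paper's own proof'' to compare against.

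Your argument is correct and is essentially the standard one. Step~2 via O'Neill is clean and the bookkeeping checks out with the paper's conventions: from \eqref{eq:transverse metric} one has $d\eta(X,Y)=2g(\Phi X,Y)$ on $\rH$, so $\sum_i d\eta(X,e_i)^2=4\Vert\Phi X\Vert^2=4\Vert X\Vert^2$, giving $\sum_i\Vert A_Xe_i\Vert^2=\Vert X\Vert^2$ and hence $\Ric^T(X,X)=\Ric(X,X)-\Vert X\Vert^2+3\Vert X\Vert^2$ exactly as you wrote. The only soft spot is in Step~1, where you invoke ``the standard K-contact fact that $\operatorname{div}\Phi$ is proportional to $\xi$'' without justifying it. For Sasakian manifolds this is immediate from \eqref{sasakiana1}, but in the genuinely K-contact case (where $(\nabla_X\Phi)(Y)$ is not given explicitly) it needs an argument; the quickest is to note that the codifferential $\delta(d\eta)$ is a basic $1$-form (since $\xi$ is Killing and $\cL_\xi d\eta=0$), compute $\delta(d\eta)=4n\,\eta$ from $d\eta(X,Y)=2g(\Phi X,Y)$ and $\Phi^2=-\id+\eta\otimes\xi$, and then use $\Ric(\xi,\cdot)=\tfrac12\Delta\eta=\tfrac12\delta d\eta$ for a unit Killing field. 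This closes the gap you flagged yourself.
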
  
  The transverse scalar curvature $s^T$ is defined by the trace of  $\Ric^T$. Hence using $i_\xi\eta=1$, $i_\xi d\eta=0$ and \eqref{eq:g g^t relation}, for the \textbf{transverse scalar curvature} we have 
\begin{equation}
\label{eq:scalar curvature}
  s^T=s + 2n
\end{equation} 
A Sasaki-Einstein manifold is a Riemannian manifold $(M, g)$ that is both Sasakian and Einstein, hence
\begin{definition}
\label{def:TkahlerEinstein}
  A Sasakian manifold $(M,\xi, \eta, g, \Phi)$ is called \textbf{transverse Kähler-Einstein} if there exists  a constant $\lambda$ such that $\Ric^T = \lambda g^T$.
\end{definition} 
A Sasaki-Einstein manifold is a Riemannian manifold $(M, g)$ that is both Sasakian and Einstein, hence
\begin{definition}\label{def:sasaki einstein}
A Sasakian manifold $(M,\xi, \eta, g, \Phi)$ is \textbf{Sasakian-Einstein} if $\Ric = 2ng$.
\end{definition}
In particular  Sasakian-Einstein manifolds are   necessary Ricci positive. This can be summarized in the following
\begin{proposition}\cite[Proposition~1.9]{sparks2010sasaki}\label{prop:sasaki eisntein}
Let $(M,\xi, \eta, g, \Phi)$ a Sasakian manifold of dimension $2n+1$, the following are equivalent
\begin{enumerate}
\item $(M,g)$  is Sasaki-Einstein with $\Ric_g=2ng$.
\item The Kähler cone $(\cC(M),\bar{g})$ in Definition \ref{thm:app sasakian manifold} is Ricci-flat, $\Ric_{\bar{g}=0}$.
\item The transverse Kähler structure to the Reeb foliation $\cF_\xi$ is Kähler-Einstein with $\Ric^T=(2n+2)g^T$.
\end{enumerate}
\end{proposition}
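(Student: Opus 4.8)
The plan is to prove the three-way equivalence by routing everything through statement (1), that $\Ric_g=2ng$: I would establish $(1)\Leftrightarrow(3)$ directly from the transverse Ricci identities already recorded in Proposition \ref{prop:Ric^t Ric}, and $(1)\Leftrightarrow(2)$ from the classical formula relating the Ricci tensor of a metric cone to that of its link. Since the statement is \cite[Proposition~1.9]{sparks2010sasaki}, I would begin by citing it and then give the short internal derivation, as no geometric input beyond the appendix is needed.

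For $(1)\Leftrightarrow(3)$: recall from Proposition \ref{prop:Ric^t Ric} that on a K-contact (hence Sasakian) manifold one has $\Ric(X,\xi)=2n\,\eta(X)$ for all $X\in TM$, and $\Ric^T(X,Y)=\Ric(X,Y)+2g^T(X,Y)$ for $X,Y\in\Gamma(\rH)$; together with the orthogonal splitting $g=g^T+\eta\otimes\eta$ of \eqref{eq:g g^t relation} (so $g(X,Y)=g^T(X,Y)$ on $\rH$ and $g(X,\xi)=\eta(X)$) these relations determine $\Ric_g$ from $\Ric^T$ and conversely. Assuming (1) and restricting to horizontal vectors gives $\Ric^T=2n\,g^T+2g^T=(2n+2)g^T$, which is (3); since $\Ric$ is automatically $\Phi$-invariant on a Sasakian manifold, $\Ric^T$ is $J$-invariant, so this genuinely exhibits the transverse structure as Kähler--Einstein in the sense of Definition \ref{def:TkahlerEinstein}. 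Conversely, assuming (3), for $X,Y\in\Gamma(\rH)$ one gets $\Ric(X,Y)=\Ric^T(X,Y)-2g^T(X,Y)=2n\,g^T(X,Y)=2n\,g(X,Y)$, while $\Ric(X,\xi)=2n\,\eta(X)=2n\,g(X,\xi)$ by Proposition \ref{prop:Ric^t Ric}(i); as every tangent vector splits as a horizontal part plus a multiple of $\xi$, this yields $\Ric_g=2ng$ on all of $TM$, i.e. (1).

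For $(1)\Leftrightarrow(2)$: I would invoke the standard computation of the Levi-Civita connection and Ricci tensor of the cone metric $\bar g=dr^2+r^2g$ over an $m$-dimensional base $(M,g)$, here with $m=2n+1$. One has $\overline{\Ric}(\partial_r,\cdot)=0$ and $\overline{\Ric}(X,Y)=\Ric_g(X,Y)-(m-1)\,g(X,Y)$ for $X,Y$ tangent to $M$, so $\overline{\Ric}\equiv0$ iff $\Ric_g=(m-1)g=2ng$, which is exactly the equivalence of (1) and (2); and since by Definition \ref{thm:app sasakian manifold}(iii) the cone $\cC(M)$ is Kähler of complex dimension $n+1$, a Ricci-flat cone is the same as a Calabi--Yau cone, so nothing further is required.

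The work is bookkeeping rather than conceptual, and the main point to get right is the matching of the Einstein constants across the three statements: the dimension count ($\dim M=2n+1$ against $\dim_\C\cC(M)=n+1$), the convention $\omega^T=\tfrac12 d\eta$, and the factor in $g=g^T+\eta\otimes\eta$ must all be tracked consistently so that $2n$, $2n+2$, and $0$ line up. I would also flag that the cone curvature identity presupposes $\bar g$ is a genuine metric cone, which is guaranteed by Definition \ref{thm:app sasakian manifold}(iii), and that the $\Phi$-invariance of $\Ric$ used in $(1)\Rightarrow(3)$ is the Sasakian identity $Q\Phi=\Phi Q$; both are standard and present no obstacle.
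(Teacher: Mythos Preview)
The paper does not supply its own proof of this proposition: it is stated in the appendix with the citation \cite[Proposition~1.9]{sparks2010sasaki} and no argument is given. Your proposal is therefore not to be compared against a proof in the paper but stands on its own, and it is correct and standard --- the equivalence $(1)\Leftrightarrow(3)$ follows exactly as you say from Proposition~\ref{prop:Ric^t Ric} and the splitting \eqref{eq:g g^t relation}, and $(1)\Leftrightarrow(2)$ is the classical cone-Ricci identity $\overline{\Ric}(X,Y)=\Ric_g(X,Y)-(m-1)g(X,Y)$ with $m=2n+1$.
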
 

\bibliography{contenido/Bibliografia-2018-06}

\end{document}